\title{Ultra-test ideals in rings with finitely generated anti-canonical algebras}
\author{Tatsuki Yamaguchi}
\address{Department of Mathematics, School of Science, Institute of Science Tokyo}
\email{yamaguchi.t.bp@m.titech.ac.jp}
\def\ge{\geqslant}
\def\le{\leqslant}
\def\phi{\varphi}
\def\epsilon{\varepsilon}
\def\tilde{\widetilde}
\def\mapsto{\longmapsto}
\def\Hom{\operatorname{Hom}}
\def\Spec{\operatorname{Spec}}
\def\Proj{\operatorname{Proj}}
\def\Div{\operatorname{div}}
\def\ulim{\operatorname{ulim}}
\def\Ann{\operatorname{Ann}}
\def\adj{\operatorname{adj}}
\def\m{{\mathfrak m}}
\def\n{{\mathfrak n}}
\def\ba{{\mathfrak a}}
\newcommand{\F}{\mathcal{F}}
\newcommand{\N}{\mathbb{N}}
\newcommand{\Q}{\mathbb{Q}} 
\newcommand{\C}{\mathbb{C}} 
\newcommand{\Z}{\mathbb{Z}}
\newcommand{\sO}{\mathcal{O}}
\theoremstyle{plain}
\newtheorem{thm}{Theorem}[section] 
\newtheorem{prop}[thm]{Proposition}
\newtheorem{lem}[thm]{Lemma}
\theoremstyle{definition} 
\newtheorem{defn}[thm]{Definition}
\newtheorem{obs}[thm]{Observation}
\theoremstyle{remark}
\newtheorem{rem}[thm]{Remark}
\newtheorem{note}[thm]{Note}
\newtheorem{ques}[thm]{Question}
\newtheorem{setting}[thm]{Setting}
\newtheorem{defn and notation}[thm]{Definition and Notation}
\newtheorem*{notation}{Notation} 
\newtheorem*{cl}{Claim}
\newtheorem*{clproof}{Proof of Claim}
\newtheorem*{acknowledgement}{Acknowledgments}
\begin{document}

\tolerance = 9999
\begin{abstract}
	When anti-canonical rings are finitely generated, we give a characterization of adjoint ideals using ultra-Frobenii, a characteristic zero analogue of Frobenius morphisms. This characterization enables us to give an alternative proof of a result of Zhuang, which states that if a ring is of klt type, then so is any of its pure subrings.
\end{abstract}
\maketitle
\section{Introduction}
 A ring homomorphism $R\to S$ is said to be {\it pure} if for any $R$-module $M$, the canonical morphism $M\to M\otimes_R S$ is injective. For example, if $R$ is a direct summand of $S$ as an $R$-module, then the inclusion $R\to S$ is pure. We say that a morphism $f:Y\to X$ of schemes is {\it pure} if for any $x\in X$, there exists $y\in Y$ such that $f(y)=x$ and the morphism $\sO_{X,x}\to \sO_{Y,y}$ is pure. Geometrically, when a linearly reductive group $G$ acts on a variety $Y$ over the field $\C$ of complex numbers, the quotient morphism $Y\to Y//G$ is a pure morphism. It is known that certain classes of singularities descend under pure morphisms. A classical result by Boutot \cite{Boutot87} states that rational singularities descend under pure morphisms. Schoutens \cite{Schoutens05} proved that if $Y\to X$ is a pure morphism between $\Q$-Gorenstein normal varieties over $\C$ and $Y$ has log terminal singularities, then so does $X$. However, $\Q$-Gorensteinness does not descend under pure morphisms even if $Y$ is nonsingular. Therefore, it is natural to generalize the definition of log terminal singularities to the non-$\Q$-Gorenstein setting, following the approach in \cite{dFH09}. Specifically, we say that a normal variety $X$ is of {\it klt type} if there exists an effective $\Q$-Weil divisor $\Delta$ on $X$ such that $K_X+\Delta$ is $\Q$-Cartier and the pair $(X,\Delta)$ has Kawamata log terminal (klt) singularities. Braun, Greb, Langlois, and Moraga \cite{BGLM24} showed that reductive quotients of singularities of klt type are of klt type. Zhuang \cite{Zhuang24} extended the results of Schoutens and Braun et al. and showed that if $Y\to X$ is pure and $Y$ is of klt type, then $X$ is of klt type. Takagi and the author \cite{TY24} generalized their result to the case of pairs $(X,\Delta)$, where $X$ is a normal variety and $\Delta$ is a $\Q$-Weil divisor. In order to show this theorem, they utilized reduction modulo $p>0$ and ultraproducts, a technique from non-standard analysis.
 
 $F$-singularities are singularities in positive characteristic, defined in terms of the Frobenius morphism. Strong $F$-regularity, one of the major classes of $F$-singularities, and test ideals, which play a central role in the theory of $F$-singularities, are positive characteristic analogues of klt singularities and multiplier ideals, respectively.
 
  The ultraproduct of a family of infinitely many sets is defined as a quotient of the product of these sets by an equivalence relation. For a local ring $R$ essentially of finite type over $\C$, Schoutens \cite{Schoutens03} constructed approximations $(R_p)_p$, families of local rings $R_p$ essentially of finite type over an algebraic closure $\overline{\mathbb{F}_p}$ of $\mathbb{F}_p$, and the non-standard hull $R_\infty$, the ultraproduct of $(R_p)_p$, such that there exists a natural inclusion $R\to R_\infty$. Within this framework, he \cite{Schoutens05} proved the aforementioned theorem about pure subrings of log terminal singularities by using ultra-Frobenii, which are induced by the product of iterated Frobenius morphisms $F^e:R_p\to R_p$. The author introduced in \cite{Yam23a} the notion of ultra-test ideals, which are defined similarly to test ideals but using ultra-Frobenii instead of usual Frobenius morphisms, and showed that they coincide with multiplier ideals if the ring is $\Q$-Gorenstein. In this paper, we generalize the definition of ultra-test ideals and also relax the assumption to include the case where the anti-canonical ring is finitely generated.
 	
 In singularity theory, the $\Q$-Gorenstein hypothesis often simplifies situations. A variety over an algebraically closed field of characteristic zero is said to be of {\it strongly $F$-regular type} if all its reductions modulo $p\gg 0$ are strongly $F$-regular. Hara and Watanabe \cite{Hara-Watanabe} showed that for pairs $(X,\Delta)$ such that $K_X+\Delta$ is $\Q$-Cartier, having klt singularities is equivalent to being of strongly $F$-regular type. Takagi \cite{Takagi} showed that reduction modulo $p\gg 0$ of the multiplier ideal $\mathcal{J}(X,\Delta)$ of a pair $(X,\Delta)$ coincides with the test ideal of its reductions $(X_p,\Delta_p)$ modulo $p\gg 0$ if $K_X+\Delta$ is $\Q$-Cartier. These two results were generalized by Chiecchio, Enescu, Miller and Schwede \cite{CEMS18} to the case where a anti-log-canonical ring is finitely generated. It is worth mentioning that this assumption is much more widely satisfied than the $\Q$-Gorenstein hypothesis. For example, such finite generation holds if the variety $X$ is of klt type (\cite{BCHM10}).
 
 The aim of this paper is to characterize adjoint ideals using ultra-Frobenii. Adjoint ideals are a generalization of multiplier ideals and define the non-purely-log-terminal loci. The main theorem is stated as follows:
 \begin{thm}[Theorem \ref{divisorial ultra-test ideals = adjoint ideals}]
 	Let $(R,\m)$ be a local normal domain essentially of finite type over $\C$, $D$ be a prime divisor or $D=0$,  $f$ be not in any minimal prime of $R(-D)$ and $t$ be a positive rational number. Suppose that $\bigoplus_{i\ge 0}R(-i(K_R+D))$ is a finitely generated $R$-algebra. Then
 	\[
 	\tau_{D}^{\mathrm{u}}(R,D,f^t)=\adj_D(R,D,f^t),
 	\]
 	where $\tau_D^{\mathrm{u}}(R,D)$ denotes the divisorial ultra-test ideal of the triple $(R,D,f^t)$ along $D$ (see Definition \ref{definition of divisorial ultra-test ideals}), and $\adj_D(R,D,f^t)$ denotes the adjoint ideal of the triple $(R,D,f^t)$ along $D$.
 \end{thm}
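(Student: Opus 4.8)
The plan is to reduce the theorem to the positive-characteristic comparison between adjoint ideals and divisorial test ideals, and then to transport that comparison back to characteristic zero through the non-standard hull. Fix an approximation $(R_p)_p$ of $R$, with non-standard hull $R_\infty=\ulim_p R_p$, and the induced reductions $D_p$, $f_p$; the hypothesis that $f$ lies in no minimal prime of $R(-D)$ ensures the same for $f_p$ for almost all $p$, so the triples $(R_p,D_p,f_p^t)$ have the expected shape. By construction (Definition \ref{definition of divisorial ultra-test ideals}), $\tau_D^{\mathrm u}(R,D,f^t)$ is extracted from the ultra-Frobenius on $R_\infty$, which is the ultraproduct of the Frobenii $F^{e}\colon R_p\to R_p$. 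The first task is to unwind this definition and identify, as ideals of $R_\infty$,
\[
\tau_D^{\mathrm u}(R,D,f^t)\cdot R_\infty=\ulim_p\,\tau_{D_p}(R_p,D_p,f_p^t),
\]
where $\tau_{D_p}$ denotes the divisorial test ideal in characteristic $p$. Here {\L}o\'{s}'s theorem is the engine: finite generation of $\bigoplus_{i\ge0}R(-i(K_R+D))$ — together with a bound on the number and degrees of a generating set — spreads out over a finitely generated $\Z$-subalgebra, as do normality and the $S_2$ condition, so that for almost all $p$ the ring $R_p$ again has a finitely generated anti-canonical algebra with the same combinatorial data and $\bigl(R(-i(K_R+D))\bigr)_\infty\cong R_\infty(-i(K_{R_\infty}+D_\infty))$; the module- and map-level data defining $\tau_D^{\mathrm u}$ on $R_\infty$ are then componentwise those defining $\tau_{D_p}$ on $R_p$.

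The second ingredient is the divisorial (adjoint) analogue of the theorem of Chiecchio, Enescu, Miller and Schwede \cite{CEMS18}, which itself generalizes \cite{Hara-Watanabe} and \cite{Takagi}: since $\bigoplus_{i\ge0}R(-i(K_R+D))$ is a finitely generated $R$-algebra, for all $p\gg0$ the reduction modulo $p$ of $\adj_D(R,D,f^t)$ coincides with $\tau_{D_p}(R_p,D_p,f_p^t)$. If this adjoint refinement is not already in the literature, it is obtained by the method of loc.\ cit.\ with the multiplier ideal replaced by the adjoint ideal and the test ideal by the divisorial test ideal: one works on a log resolution that simultaneously resolves the anti-canonical blow-up attached to $\bigoplus_i R(-i(K_R+D))$ and retains the component $D$ through adjunction. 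Passing to the ultraproduct gives $\ulim_p\tau_{D_p}(R_p,D_p,f_p^t)=\adj_D(R,D,f^t)\cdot R_\infty$.

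Combining the two steps, $\tau_D^{\mathrm u}(R,D,f^t)\cdot R_\infty=\adj_D(R,D,f^t)\cdot R_\infty$. Since $R\to R_\infty$ is faithfully flat \cite{Schoutens03}, one has $IR_\infty\cap R=I$ for every ideal $I$ of $R$; as $\tau_D^{\mathrm u}(R,D,f^t)$ and $\adj_D(R,D,f^t)$ are both ideals of $R$, intersecting the displayed equality with $R$ yields the theorem.

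The main obstacle is the first step. In the $\Q$-Gorenstein case of \cite{Yam23a} a single Cartier index controls everything, whereas here the finite generation of the anti-canonical algebra — and the attendant choice of a Veronese degree, or of an integer $N$ with good behaviour of $N(K_R+D)$ on a fixed blow-up — must be made \emph{uniform in $p$}, and one must check that the reflexive powers $R(-i(K_R+D))$, the anti-canonical Rees algebra, and the ultra-Frobenius are all simultaneously compatible with reduction modulo $p$ for almost all $p$. Pinning down these compatibilities, so that the ultraproduct above is literally the divisorial test ideal in each component, is where the finite-generation hypothesis does its real work and is the delicate point of the argument.
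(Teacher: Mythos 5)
There is a genuine gap, and it sits exactly where you flag ``the delicate point'': your first step, the asserted identity $\tau_D^{\mathrm u}(R,D,f^t)\cdot R_\infty=\ulim_p\,\tau_{D_p}(R_p,D_p,f_p^t)$, is not a consequence of {\L}o\'{s}'s theorem and is essentially as strong as the theorem itself. The divisorial ultra-test ideal is defined as $\Ann_R 0_{E}^{*\mathrm u_D D,f^t}$ where $E=E_R(R/\m)$, and membership in $0_{E}^{*\mathrm u_D D,f^t}$ demands a \emph{single standard} multiplier $c\in R^{\circ,D}$ that works simultaneously for \emph{all} $\epsilon\in{^*\N}$; by contrast, the characteristic-$p$ ideals $\tau_{D_p}$ allow test elements $c_p$ varying with $p$. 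Moreover $E$ is neither finitely generated nor the ultraproduct of the injective hulls $E_{R_p}(R_p/\m_p)$ (there are only natural maps, e.g.\ $H^d_\m(\omega_R)\to\ulim_p H^d_{\m_p}(\omega_{R_p})$, whose injectivity itself requires proof), and the annihilator is taken over $R$ inside $E$ rather than over $R_\infty$ inside an ultraproduct. None of this is first-order data to which {\L}o\'{s} applies componentwise, so neither inclusion in your claimed identity is justified. If such an identification were available by spreading out the finite generation of $\bigoplus_i R(-i(K_R+D))$, then combined with the known mod-$p$ comparison $\adj_D(R,D,f^t)_p=\tau_{D_p}(R_p,D_p,f_p^t)$ the theorem would be formal; but the hard containment $\tau_D^{\mathrm u}\subseteq\adj_D$ is precisely what cannot be obtained this way.

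The paper's proof supplies the missing content by a cyclic-cover argument rather than a componentwise identification. The easy containment $\adj_D(R,D,f^t)\subseteq\tau_D^{\mathrm u}(R,D,f^t)$ is proved directly from the mod-$p$ comparison together with the injectivity of $H^d_\m(\omega_R)\to\ulim_p H^d_{\m_p}(\omega_{R_p})$. For the converse, one passes to the graded algebra $\mathscr R=\bigoplus_{i\ge0}R(-i(K_R+D))$ with its divisor $\mathscr D$, where $K_{\mathscr R}+\mathscr D$ is Cartier: a Watanabe-type comparison of $H^{d+1}_{\mathscr M}(\omega_{\mathscr R})$ with $\bigoplus_{i>0}H^d_\m(R(-D+i(K_R+D)))T^{-i}$, implemented by the maps $\psi_{\epsilon,\alpha}$, gives $\tau_D^{\mathrm u}(R,D,f^t)\subseteq\tau_{\mathscr D}^{\mathrm u}(\mathscr R_{\mathscr M},\mathscr D,f^t)$; the Cartier case identifies the latter with $\adj_{\mathscr D}(\mathscr R_{\mathscr M},\mathscr D,f^t)$; and a descent lemma for divisorial test ideals along the split injections $R_p\to(\mathscr R_{\mathscr M})_p$ contracts this back into $\adj_D(R,D,f^t)$. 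Your final faithful-flatness contraction step is fine as far as it goes, but without a proof of the first step the argument does not close, and you should expect that step to require the anti-canonical algebra in an essential way, not merely a uniform choice of Veronese degree.
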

 The key idea behind the proof of the above theorem is to define divisorial ultra-test ideals, a variant of ultra-test ideals similar to divisorial test ideals introduced in \cite{Tak08}, and then to compare the top local cohomologies of the ring in question and its anti-canonical algebra as in \cite{Wat94}, where Watanabe proved that the anti-canonical algebra of a strongly $F$-regular ring is again strongly $F$-regular if the anti-canonical algebra is finitely generated. The divisorial ultra-test ideals differ from the concept of BCM test ideals used in \cite{TY24}, but they coincide when the ring is $\Q$-Gorenstein, because in that case, both are equal to adjoint ideals.
 As an application of the above theorem, we study the behavior of adjoint ideals for rings with finitely generated anti-canonical algebras under pure morphisms.
 \begin{thm}[Theorem \ref{adjoint ideals under pure morphisms}] \label{Main theorem 2}
 	Let $(R,\m)$ and $(S,\n)$ be normal local domains essentially of finite type over $\C$, let $D$ be a prime divisor or zero, let $\ba\subseteq R$ be a nonzero ideal such that, if $D$ is nonzero, its zero locus does not contain $D$ and let $t>0$ be a real number. Suppose that $\mathscr{R}=\bigoplus_{i\ge 0}R(-i(K_R+D))$ is a finitely generated $R$-algebra, $S$ is an $R$-algebra and $R\to S$ is a pure local $\C$-algebra homomorphism. Let $E$ be the Weil divisor on $\Spec S$ such that $S(-E)=(R(-D)\cdot S)^{**}$, where $(-)^{**}$ denotes the reflexive hull as an $S$-module, and suppose that $E$ is prime. Then we have
 	\[
 		\adj_E(S,E,(\ba S)^t)\cap R\subseteq \adj_D(R,D,\ba^t).
 	\]
 \end{thm}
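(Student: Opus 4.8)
The plan is to identify both sides with divisorial ultra‑test ideals by means of Theorem~\ref{divisorial ultra-test ideals = adjoint ideals} and then to transport the inclusion across the pure homomorphism at the level of ultra‑Frobenii, in the spirit of Watanabe's argument \cite{Wat94} and its non‑standard incarnation in \cite{Schoutens05}. As a preliminary I would extend Theorem~\ref{divisorial ultra-test ideals = adjoint ideals} to the present generality of an ideal $\ba$ and a real exponent $t$ by the usual approximations (perturbing $t$ to a nearby rational and replacing $\ba^t$ by $(g)^{t'}$ for a general element $g$ and a suitable rational $t'$). Since $\mathscr{R}$ is finitely generated this yields $\adj_D(R,D,\ba^t)=\tau^{\mathrm{u}}_D(R,D,\ba^t)$, so it is enough to prove
\[
\adj_E(S,E,(\ba S)^t)\cap R\subseteq \tau^{\mathrm{u}}_D(R,D,\ba^t).
\]

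The first step is to manufacture a finitely generated anti‑canonical‑type algebra on $\Spec S$ out of the one on $\Spec R$. Set $\mathscr{S}:=(\mathscr{R}\otimes_R S)^{**}=\bigoplus_{i\ge 0}\bigl(R(-i(K_R+D))\cdot S\bigr)^{**}$; this is a finitely generated $S$‑algebra, being the $S_2$‑ification of the finitely generated $S$‑algebra $\mathscr{R}\otimes_R S$ modulo torsion, hence module‑finite over it. Write $\mathscr{S}=\bigoplus_{i\ge 0}S(-i\Theta)$, where $\Theta$ is the reflexive pullback of $K_R+D$ to $\Spec S$, and put $\Theta=K_S+B$, so that $E-B$ is the relative canonical divisor of $\Spec S$ over $\Spec R$. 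The crucial geometric input is that purity of $R\to S$ forces this relative canonical divisor to be effective, so $B\le E$; moreover, since $E$, being the reflexive pullback of $D$, is assumed prime, the morphism is unramified in codimension one along $E$, so the relative canonical divisor has coefficient zero there and $B$ still has coefficient one along $E$ (and the zero locus of $\ba S$ avoids $E$). From $B\le E$ we obtain $\adj_E(S,E,(\ba S)^t)\subseteq\adj_E(S,B,(\ba S)^t)$, and since $\bigoplus_{i\ge 0}S(-i(K_S+B))=\mathscr{S}$ is finitely generated, Theorem~\ref{divisorial ultra-test ideals = adjoint ideals} (whose proof applies verbatim when the boundary is the difference of a prime divisor and an effective integral divisor not containing it) gives $\adj_E(S,B,(\ba S)^t)=\tau^{\mathrm{u}}_E(S,B,(\ba S)^t)$. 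It therefore remains to establish
\[
\tau^{\mathrm{u}}_E(S,B,(\ba S)^t)\cap R\subseteq \tau^{\mathrm{u}}_D(R,D,\ba^t).
\]

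For this, note first that $\mathscr{R}\otimes_R S\to\mathscr{S}$ inherits purity from $R\to S$ (purity is stable under the base change $R\to\mathscr{R}$, and passing to the reflexive hull does not destroy it on the modules that matter here). Now choose compatible approximations $(R_p)_p$, $(S_p)_p$: the homomorphisms $R_p\to S_p$ are pure for almost all $p$ by \cite{Schoutens05,TY24}, the algebra $\mathscr{S}$ spreads out to $(\mathscr{R}_p\otimes_{R_p}S_p)^{**}$ compatibly with all Frobenii, and the same holds after taking ultraproducts. Unwinding Definition~\ref{definition of divisorial ultra-test ideals}, $\tau^{\mathrm{u}}_D(R,D,\ba^t)$ and $\tau^{\mathrm{u}}_E(S,B,(\ba S)^t)$ are the annihilators of certain submodules — the ultra‑tight closures of zero, twisted by $\ba$ and by $\ba S$ — of the top local cohomology modules attached to $\mathscr{R}$ and to $\mathscr{S}$. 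Purity makes the natural map between these local cohomology modules injective, and because the ultra‑Frobenius trace actions are compatible under $\mathscr{R}\otimes_R S\to\mathscr{S}$, while the identification $\mathscr{S}=(\mathscr{R}\otimes_R S)^{**}$ matches the twist by $D$ and $\ba$ on the source with the twist by $B$ and $\ba S$ on the target, this map carries the ultra‑tight closure of zero for $\mathscr{R}$ into the one for $\mathscr{S}$. Taking annihilators and intersecting with $R$ reverses the inclusion and gives $\tau^{\mathrm{u}}_E(S,B,(\ba S)^t)\cap R\subseteq\tau^{\mathrm{u}}_D(R,D,\ba^t)$; combined with $\tau^{\mathrm{u}}_D(R,D,\ba^t)=\adj_D(R,D,\ba^t)$, this completes the proof.

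I expect the two genuinely delicate points to be (i) the effectivity of the relative canonical divisor under a pure homomorphism, together with its vanishing along $E$ — this replaces the classical effectivity of the ramification divisor and has to be argued without assuming that $R\to S$ is generically finite; and (ii) verifying that purity of $R\to S$ descends to characteristic $p$ (or to the non‑standard hulls), and survives the reflexification $\mathscr{R}\otimes_R S\to\mathscr{S}$, in a form strong enough to yield the required injectivity on the relevant local cohomology modules.
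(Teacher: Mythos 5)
Your plan founders at the step you yourself flag as delicate, and that step is load-bearing. For a pure local homomorphism $R\to S$ that is neither finite nor flat there is no trace or ramification theory, so nothing forces the reflexive pullback of $K_R+D$ to satisfy $\Theta\le K_S+E$ (the assertion is not even well-posed until representatives of the canonical classes are fixed, and purity supplies no such comparison); without $B\le E$ the chain $\adj_E(S,E,(\ba S)^t)\subseteq\adj_E(S,B,(\ba S)^t)=\tau^{\mathrm{u}}_E(S,B,(\ba S)^t)$ collapses. The auxiliary claims are also unsupported: $\bigoplus_{i\ge 0}(R(-i(K_R+D))\cdot S)^{**}$ need not equal $\bigoplus_{i\ge 0}S(-i\Theta)$ for any single divisor $\Theta$, since the order of $R(-i(K_R+D))S$ along a prime of $\Spec S$ need not be linear in $i$; its finite generation is a reflexified Rees-algebra-type statement, not the module-finiteness of an $S_2$-ification, and does not follow from finite generation of $\mathscr{R}$. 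Likewise, ``$R_p\to S_p$ is pure for almost all $p$'' is not in \cite{Schoutens05} or \cite{TY24} and is exactly the kind of statement one cannot get by reduction modulo $p$; the ultraproduct formalism is used precisely to keep purity in characteristic zero. Finally, Theorem \ref{divisorial ultra-test ideals = adjoint ideals} as proved applies to the triple $(S,E,f^t)$ with $\bigoplus_i S(-i(K_S+E))$ finitely generated, so invoking it ``verbatim'' for $(S,B)$ is a further unproved extension.

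For comparison, the paper's proof needs no algebra, no finite generation, and no canonical-divisor comparison on the $S$-side. It uses only: (a) $R^{\circ,D}\subseteq S^{\circ,E}$ and $R(nD)\subseteq S(nE)$, which follow from $S(-E)=(R(-D)\cdot S)^{**}$ alone (see the unlabeled lemma after Theorem \ref{divisorial ultra-test ideals = adjoint ideals} and \cite[Propositions 6.6, 6.7]{TY24}); (b) purity in characteristic zero, i.e.\ injectivity of $M\to M\otimes_R S$, giving $0_M^{*\mathrm{u}_DD,f^{1/m}}\subseteq 0_{M\otimes_R S}^{*\mathrm{u}_EE,f^{1/m}}$ for every $R$-module $M$ and every monomial $f=f_1^{i_1}\cdots f_n^{i_n}$; and (c) the identification $\adj_D(R,D,\ba^t)=\Ann_R 0_{H^d_\m(\omega_R)}^{*\mathrm{u}_DD,f_1,\dots,f_n,t}$, obtained from Theorem \ref{divisorial ultra-test ideals = adjoint ideals} applied to those monomials together with the summation formula \cite[Lemma 8.3]{TY24} and the intersection over all modules as in Proposition \ref{tau over all modules}; the $S$-side enters only through the containment of $\adj_E(S,E,(\ba S)^t)$ in the corresponding annihilator over $S$. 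Your closing idea of transporting the inclusion via annihilators of ultra-closures under purity is the right spirit, but it must be implemented module-theoretically as above, dropping the construction of $\mathscr{S}$ and the relative-canonical effectivity entirely.
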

This is a generalization of \cite[Theorem 1.2(2)]{TY24}, where one of the following conditions is assumed:
\begin{enumerate}
	\item $K_R+D$ is $\Q$-Cartier
	\item For any reduced divisor $B$ on $Y:=\Spec S$, $\bigoplus_{i\ge 0} S(iB)$ is a finitely generated $S$-algebra.
\end{enumerate}
 When $D=0$ and $\ba=R$, Theorem \ref{Main theorem 2} provides another proof of \cite[Theorem 1.1]{Zhuang24}, which states that any pure subring of singularities of klt type is again of klt type.
As an application of Theorem \ref{Main theorem 2}, we obtain a generalization of \cite[Corollary 1.3]{TY24}, where $X$ is assumed to be $\Q$-Gorenstein.
\begin{thm}[Theorem \ref{log canonical under pure morphisms}]
			Let $f:Y\to X$ be a pure morphism between normal quasi-projective varieties over $\C$ and suppose that $\bigoplus_{i\ge 0} \sO_X(-iK_X)$ is finitely generated.
	Assume in addition that one of the following conditions holds.
	\begin{enumerate}
		\item There exists an effective $\Q$-Weil divisor $\Delta$ on $Y$ such that $K_Y+\Delta$ is $\Q$-Cartier and no non-klt center of $(Y, \Delta)$ dominates $X$. 
		\item The non-klt-type locus of $Y$ has dimension at most one. 
	\end{enumerate}
	If $Y$ is of lc type, then $X$ is of lc type.
\end{thm}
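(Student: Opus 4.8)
\emph{Proof proposal.} The plan is to follow the proof of \cite[Corollary 1.3]{TY24}, using Theorem~\ref{adjoint ideals under pure morphisms} in place of the $\Q$-Gorenstein version of that result exploited there; the only structural change is that the $\Q$-Gorenstein hypothesis on $X$ is replaced by the finite generation of $\bigoplus_{i\ge 0}\sO_X(-iK_X)$, which is exactly what is needed to make sense of the ``canonical'' $\Q$-boundaries of \cite{dFH09, CEMS18} on the non-$\Q$-Gorenstein base $X$. First I would reduce to the affine local situation: being of lc type is a condition on the local rings $\sO_{X,x}$, and the local witnesses glue to a global effective $\Q$-Weil divisor by quasi-projectivity of $X$ (as in \cite{Zhuang24, TY24}), while finite generation of the anti-canonical algebra localizes. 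So we may assume $X=\Spec R$ and $Y=\Spec S$ with $(R,\m)\to(S,\n)$ a pure local $\C$-algebra homomorphism and $\bigoplus_{i\ge 0}R(-iK_R)$ finitely generated.

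For case (1): since $Y$ is of lc type, fix an effective $\Q$-Weil divisor $\Delta_Y$ with $(Y,\Delta_Y)$ log canonical and $K_Y+\Delta_Y$ $\Q$-Cartier. The content of hypothesis (1) is that, after adjusting $\Delta_Y$ with the boundary $\Delta$ provided there, we may assume the non-klt locus of $(Y,\Delta_Y)$ maps into a proper closed subset $Z\subsetneq X$. Over $X\setminus Z$ the variety $Y$ is then of klt type, so \cite[Theorem 1.1]{Zhuang24} --- or Theorem~\ref{adjoint ideals under pure morphisms} with $D=0$ --- shows $X\setminus Z$ is of klt type. To propagate this across $Z$ and record it as genuine log canonicity of a global pair $(X,\Gamma)$, I would express ``$X$ is of lc type'' through a family of ultra-test/adjoint ideal conditions in the spirit of \cite{CEMS18}: approximate $\Delta_Y$ from below by klt perturbations $\Delta_Y^{(n)}\uparrow\Delta_Y$, realize each --- via general elements of complete linear systems on $X$ coming from the finitely generated anti-canonical algebra --- as a pair $(S,(\ba_nS)^{t_n})$ with $\ba_n\subseteq R$ a nonzero ideal, apply Theorem~\ref{adjoint ideals under pure morphisms} to get $S=\tau^{\mathrm{u}}(S,(\ba_nS)^{t_n})\cap R\subseteq\tau^{\mathrm{u}}(R,\ba_n^{t_n})$, and finally pass to the limit $n\to\infty$, invoking the machinery of \cite{dFH09, CEMS18} to assemble the $\Q$-boundaries $\Gamma_n$ on $X$ into an effective $\Q$-Weil divisor $\Gamma$ with $K_X+\Gamma$ $\Q$-Cartier and $(X,\Gamma)$ log canonical. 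Hypothesis (1) --- that the non-klt part of $Y$ does not dominate $X$ --- is precisely what keeps the limiting pair from degenerating.

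For case (2), I would reduce to case (1). If $\dim X\le 1$, then $X$ is a normal variety of dimension at most one, hence regular, hence of klt type and a fortiori of lc type. If $\dim X\ge 2$, then a morphism does not increase dimension, so the image in $X$ of the non-klt-type locus $W\subseteq Y$ (of dimension $\le 1$) lies in a proper closed subset and $W$ does not dominate $X$; constructing an effective $\Q$-Weil divisor $\Delta$ on $Y$ with $K_Y+\Delta$ $\Q$-Cartier and $(Y,\Delta)$ klt on $Y\setminus W$ --- glue the klt-type boundaries that exist locally on $Y\setminus W$ and correct near $W$ (using quasi-projectivity of $Y$ and the existence of a $\Q$-Gorensteinifying boundary on $Y$) --- forces every non-klt center of $(Y,\Delta)$ to lie inside $W$, so hypothesis (1) holds and we conclude by the previous case.

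The principal obstacle is the passage from klt to log canonical combined with carrying a non-$\Q$-Gorenstein base. Ultra-test and adjoint ideals see klt-ness exactly, so log canonicity of $(X,\Gamma)$ is recovered only in the limit over the perturbations $\Delta_Y^{(n)}\uparrow\Delta_Y$; the delicate part is to run this limit compatibly both with the operation $(-)\cap R$ coming from purity in Theorem~\ref{adjoint ideals under pure morphisms} and with the de Fernex--Hacon construction of a $\Q$-Cartier boundary on $X$ out of the finitely generated anti-canonical algebra, and to verify that the control on non-klt centers supplied by hypotheses (1) and (2) guarantees $\Gamma=\lim_n\Gamma_n$ is a genuine $\Q$-Weil divisor with $(X,\Gamma)$ log canonical rather than a boundary whose coefficients or support blow up. A secondary technical point is the construction in case (2) of the $\Q$-Gorensteinifying boundary on $Y$ that is klt away from the at most one-dimensional non-klt-type locus.
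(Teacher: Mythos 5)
Your high-level plan (follow \cite{TY24} with Theorem~\ref{adjoint ideals under pure morphisms} replacing the $\Q$-Gorenstein input) is the right frame --- that is literally what the paper does --- but the step you would need to make it work is missing, and the mechanism you propose in its place does not function. The heart of the argument is not a limit of klt boundaries $\Gamma_n$ on $X$. It is the lemma proved immediately before the theorem (generalizing \cite[Lemma 8.12]{TY24} to the finitely generated anti-canonical setting, via the small morphism $\pi:\Proj\bigoplus_i\sO_X(-iK_X)\to X$ and \cite[Corollary 2.25]{CEMS18}): $X$ is of lc type if and only if some fixed nonzero $f\in\mathcal{J}(X)$ satisfies $f\in\mathcal{J}(X,(1-\epsilon)\Div(f))$ for all $0<\epsilon<1$. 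This converts ``lc type'' into a family of multiplier-ideal memberships for a single element $f$ of $R$, and such statements pass directly through the purity inclusion $\mathcal{J}(S,(fS)^{1-\epsilon})\cap R\subseteq\mathcal{J}(R,f^{1-\epsilon})$ of Theorem~\ref{adjoint ideals under pure morphisms} (with $D=E=0$, $\ba=(f)$). Hypotheses (1) and (2) are used precisely to produce such an $f$ coming from $R$ whose image on $Y$ lies in the relevant multiplier/adjoint ideal (the non-klt locus does not dominate $X$, so one can choose $f\in R$ vanishing appropriately on its image), not, as you suggest, to ``keep the limiting pair from degenerating.''

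Concretely, two steps of your proposal would fail as written. First, your klt perturbations $\Delta_Y^{(n)}$ of an lc boundary $\Delta_Y$ on $Y$ cannot in general be realized as pairs $(S,(\ba_nS)^{t_n})$ with $\ba_n\subseteq R$: $\Delta_Y$ is not pulled back from $X$, and Theorem~\ref{adjoint ideals under pure morphisms} only applies to ideals extended from $R$; making the divisorial data come from the base is exactly what the criterion above accomplishes by working with $\Div(f)$ for $f\in R$. Second, the passage ``$\Gamma=\lim_n\Gamma_n$ with $(X,\Gamma)$ lc'' is not an argument: lc type is not obtained as a naive limit of klt-type witnesses (coefficients and supports of the witnesses need not stabilize), which is the very reason the $\epsilon$-family criterion is introduced --- you acknowledge this obstacle yourself, so the klt-to-lc passage, which is the actual content of the theorem beyond Theorem~\ref{adjoint ideals under pure morphisms}, is left open. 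Your reduction of case (2) to case (1) is likewise only sketched (gluing local klt-type boundaries away from $W$ and ``correcting near $W$'' needs justification), but the primary gap is the missing lc-type criterion and the unjustified limiting construction in case (1).
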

Here a variety $X$ is said to be of {\it lc type} if there exists an effective $\Q$-Weil divisor $\Delta$ on $X$ such that $K_X+\Delta$ is $\Q$-Cartier and the pair $(X,\Delta)$ has log canonical singularities.

\begin{acknowledgement}
	The author would like to express his gratitude to Shunsuke Takagi for valuable discussion. The author was partially supported by JSPS KAKENHI Grant Number 24KJ1040.
\end{acknowledgement}

\section{Preliminaries}
\subsection{Adjoint ideals}
In this subsection, we quickly review the definition of multiplier ideals and adjoint ideal sheaves. We refer the reader to \cite{dFH09}, \cite{KM98}, \cite{LazarsfeldII}.

Let $X$ be a normal variety over an algebraically closed field $k$ of characteristic zero, $D$ be a reduced divisor on $X$, $\Delta$ be an effective $\Q$-Weil divisor on $X$ which has no common components with $D$, $\ba$ be a coherent ideal sheaf such that no components of $D$ is contained in the zero locus of $\ba$ and $t$ be a positive real number.

\begin{defn}
	\begin{enumerate}
		\item If $K_X+D+\Delta$ is $\Q$-Cartier, then take a log resolution $\pi:\tilde{X}\to X$ of $(X,D+\Delta,\ba)$ such that $\ba \sO_{\tilde{X}}=\sO_{\tilde{X}}(-F)$ for an effective Cartier divisor $F$ on $\tilde{X}$ and the strict transform $\pi_{*}^{-1}D$ of $D$ is smooth. Then the {\it adjoint ideal sheaf} $\adj_D(X,D+\Delta,\ba^t)$ of the triple $(X,D+\Delta,\ba^t)$ along $D$ is defined as
		\[
			\adj_D(X,D+\Delta,\ba^t)=\pi_*\sO_{\tilde{X}}(\lceil K_{\tilde{X}}-\pi^*(K_X+D+\Delta)-tF+\pi_*^{-1}D\rceil).
		\]
		The definition of the adjoint ideal is independent of the choice of log resolution. When $\ba=\sO_X$, we use $\adj_D(X,D+\Delta)$ to denote $\adj_D(X,D+\Delta,\ba^t)$.
		\item If $K_X+D+\Delta$ is not $\Q$-Gorenstein, then the {\it adjoint ideal} $\adj_D(X,D+\Delta,\ba^t)$ of the triple $(X,D+\Delta,\ba^t)$ is defined as
		\[
			\adj_D(X,D+\Delta,\ba^t)=\sum_{\Delta'}\adj_D(X,D+\Delta+\Delta',\ba^t),
		\]
		where $\Delta'$ runs through all effective $\Q$-Weil divisors on $X$ such that $D$ and $\Delta'$ have no common components and $K_X+D+\Delta+\Delta'$ is $\Q$-Cartier. When $D=0$, we use $\mathcal{J}(X,\Delta,\ba^t)$ to denote $\adj_D(X,D+\Delta,\ba^t)$ and call it the {\it multiplier ideal} of the triple $(X,\Delta,\ba^t)$.
		\item $X$ is said to be of {\it klt type} (resp. {\it lc type}) if there exists an effective $\Q$-Weil divisor $\Delta$ on $X$ such that $K_X+\Delta$ is $\Q$-Cartier and the pair $(X,\Delta)$ has klt singularities (resp. log canonical singularities).
	\end{enumerate}
\end{defn}
\begin{rem}
	$X$ is of klt type if and only if $\mathcal{J}(X)=\sO_X$, where $\mathcal{J}(X):=\mathcal{J}(X,0)$.
\end{rem}
\begin{prop}[{\cite[Remark 6.4]{dFH09},\cite[Proposition 2.6]{TY24}}]
	There exists an effective $\Q$-Weil divisor $\Delta'$ on $X$ such that $D$ and $\Delta'$ have no common components, $K_X+D+\Delta+\Delta'$ is $\Q$-Cartier and
	\[
		\adj_D(D,X+\Delta,\ba^t)=\adj_D(X,D+\Delta+\Delta',\ba^t).
	\]
\end{prop}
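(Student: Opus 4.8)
\medskip
\noindent\emph{Proof strategy.}\ The plan is to follow de~Fernex--Hacon \cite[Remark~6.4]{dFH09}, in the adjoint-ideal formulation of \cite[Proposition~2.6]{TY24}: I would produce one \emph{admissible} boundary $\Delta'$ --- meaning $\Delta'\ge 0$, $\Delta'$ has no common component with $D$, and $K_X+D+\Delta+\Delta'$ is $\Q$-Cartier --- such that $\adj_D(X,D+\Delta+\Delta'',\ba^t)\subseteq\adj_D(X,D+\Delta+\Delta',\ba^t)$ for every admissible $\Delta''$. Since $\adj_D(X,D+\Delta+\Delta',\ba^t)$ is then simultaneously one of the summands in the definition of $\adj_D(X,D+\Delta,\ba^t)$ and an upper bound for every summand, the sum collapses to it. I will present the case $X=\Spec R$ affine; on a general quasi-projective $X$ the argument is word-for-word the same once $\sO_X(-m(K_X+D+\Delta))$ is replaced throughout by $\sO_X(-m(K_X+D+\Delta))\otimes\sO_X(A)$ for $A$ sufficiently ample, restoring global generation, exactly as in \cite{dFH09}.

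\medskip
\noindent\emph{The divisors $\Delta'_m$.}\ For $m>0$ divisible by the denominators of $\Delta$, the rank one reflexive sheaf $\sO_X(-m(K_X+D+\Delta))$ is nonzero and, on the affine $X$, generated by its global sections. I would fix a log resolution $\pi\colon\widetilde X\to X$ of $(X,D+\Delta,\ba)$, with $\ba\sO_{\widetilde X}=\sO_{\widetilde X}(-F)$, chosen so that moreover the divisor
\[
G_m:=\inf_{s}\ \pi^{*}\Div_X(s),\qquad s\ \text{ranging over }H^0\bigl(X,\sO_X(-m(K_X+D+\Delta))\bigr)\setminus\{0\},
\]
is simple normal crossing and its complementary moving system is base point free. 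A general $s$ then has minimal order along every component of $D$, and $H_s:=\pi^{*}\Div_X(s)-G_m$ is smooth and transverse to all the preceding data. Put $\Delta'_m:=\tfrac1m\Div_X(s)-(K_X+D+\Delta)$. One checks that $\Delta'_m\ge 0$, that it has no common component with $D$, and that $K_X+D+\Delta+\Delta'_m=\tfrac1m\Div_X(s)$ is $\tfrac1m$ times a principal divisor, hence $\Q$-Cartier; so $\Delta'_m$ is admissible and $\pi$ is a log resolution of $(X,D+\Delta+\Delta'_m,\ba)$. Since $\tfrac1m H_s$ is supported on a general divisor with coefficients in $(0,1)$ that, after $\pi$, meets none of the exceptional or boundary components, it is killed by the round-up, so that
\[
\adj_D(X,D+\Delta+\Delta'_m,\ba^t)=\pi_{*}\sO_{\widetilde X}\bigl(\lceil K_{\widetilde X}-\tfrac1m G_m+\pi_{*}^{-1}D-tF\rceil\bigr)=:J_m,
\]
which, by independence of adjoint ideals of the log resolution, depends only on $m$.

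\medskip
\noindent\emph{Domination.}\ Two facts about $G_m$ would finish the argument. First, $\tfrac1m G_m$ is non-increasing along divisibility: if $s\in H^0(\sO_X(-m(K_X+D+\Delta)))$ then $s^{k}\in H^0(\sO_X(-km(K_X+D+\Delta)))$ and $\Div_X(s^k)=k\Div_X(s)$, so $G_{km}\le k\,G_m$; hence the $J_m$ form a non-decreasing chain of ideals, which stabilises to an ideal $J$ by the Noetherian property of $\sO_X$. I then set $\Delta':=\Delta'_{m_0}$ for some $m_0$ past the stabilisation point, so $\adj_D(X,D+\Delta+\Delta',\ba^t)=J$. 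Second, given an admissible $\Delta''$, I choose $m'$ a multiple of $m_0$ clearing the denominators of $\Delta''$ and a log resolution refining $\pi$ which also resolves $(X,D+\Delta+\Delta''+\Delta'_{m'},\ba)$; on it $\sO_X(-m'(K_X+D+\Delta+\Delta''))$ is invertible, is a subsheaf of $\sO_X(-m'(K_X+D+\Delta))$, and (on the affine $X$) is base point free, so that $\inf_{s''}\pi^{*}\Div_X(s'')=\pi^{*}\bigl(m'(K_X+D+\Delta+\Delta'')\bigr)$ as $s''$ ranges over its global sections; taking the infimum over the larger set gives $G_{m'}\le\pi^{*}(m'(K_X+D+\Delta+\Delta''))$, i.e.\ $\tfrac1{m'}G_{m'}\le\pi^{*}(K_X+D+\Delta+\Delta'')$. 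Rounding up and pushing forward, $\adj_D(X,D+\Delta+\Delta'',\ba^t)\subseteq\pi_{*}\sO_{\widetilde X}(\lceil K_{\widetilde X}-\tfrac1{m'}G_{m'}+\pi_{*}^{-1}D-tF\rceil)=J_{m'}=J=\adj_D(X,D+\Delta+\Delta',\ba^t)$, as wanted.

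\medskip
The step I expect to be the main obstacle is the second one above, together with the monotonicity of $\tfrac1m G_m$: controlling the ``natural pullback'' $G_m$ and comparing it with $\pi^{*}$ of $\Q$-Cartier completions of $K_X+D+\Delta$, so as to extract a single $m_0$ --- hence a single $\Delta'$ --- that dominates all competitors at once. This is the technical core of \cite{dFH09}; by comparison, the Bertini-type bookkeeping for the general sections $s$, the independence of the log resolution, and the reduction to the affine case are routine.
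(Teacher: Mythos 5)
The paper does not prove this proposition itself—it is imported verbatim from \cite[Remark 6.4]{dFH09} and \cite[Proposition 2.6]{TY24}—and your sketch reconstructs essentially that same limiting-ideal argument: general sections of $\sO_X(-m(K_X+D+\Delta))$ produce admissible boundaries $\Delta'_m$, monotonicity of $\tfrac1m G_m$ under divisibility plus Noetherianity stabilize the ideals $J_m$, and the stable ideal dominates every admissible competitor, so the defining sum collapses to a single term. Your write-up is correct up to routine adjustments (take $m\ge 2$ so the coefficient $\tfrac1m$ of the general member is in $(0,1)$ and is killed by the round-up, take $m'$ a multiple of the Cartier index of $K_X+D+\Delta+\Delta''$ rather than merely clearing denominators, and arrange the refined resolution so that the moving part remains base point free when recomputing $J_{m'}$ there).
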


\subsection{Divisorial test ideals} In this subsection, we define divisorial test ideals. The reader is referred to \cite{Tak08} and \cite{Tak13} for details.
Let $R$ be a Noetherian normal domain of characteristic $p>0$ and $D$ be a reduced divisor on $X:=\Spec R$. Let $R^{\circ,D}$ denote the set of elements not in any minimal prime of $I_D:=R(-D)$. Let $\Delta$ be an effective $\Q$-Weil divisor on $X$ such that $D$ and $\Delta$ have no common components, $\ba$ be an ideal of $R$ such that $\ba \cap R^{\circ, D}\neq \emptyset$, and $t$ be a positive real number. We also suppose that $R$ is $F$-finite, i.e., the Frobenius morphism $F:R\to R$ is finite.
\begin{defn} \label{Frobenius pushforward}
	Let $M$ be an $R$-module and $e\ge 0$ be an integer. The {\it $e$-th iterated Frobenius pushforward} $F^e_*M$ of $M$ is defined as follows:
	\begin{enumerate}
		\item $F^e_*M$ is isomorphic to $M$ as an abelian group. We use $F^e_*x$ to denote the image of $x$ under this isomorphism.
		\item For $r\in R$ and $x\in M$, $r\cdot F^e_*x=F^e_*(r^{p^e}x)$.
	\end{enumerate}
\end{defn}
\begin{defn}
	The {\it divisorial test ideal} $\tau_D(R,D+\Delta,\ba^t)$ of the triple $(R,D+\Delta,\ba^t)$ along $D$ is defined as the smallest ideal $J$ of $R$ satisfying the following conditions:
	\begin{enumerate}
		\item $J\cap R^{\circ,D}\neq \emptyset$.
		\item For any integer $e\ge 0$ and any $\phi \in \Hom_R(F^e_*R(\lceil(p^e-1)(D+\Delta)\rceil),R) \subseteq \Hom_R(F^e_*R,R)$, one has $\phi(F^e_*(\ba^{\lceil t(p^e-1)\rceil}J))\subseteq J$.
	\end{enumerate}
	If $\ba=R$, we simply use $\tau_D(R,D+\Delta)$ to denote $\tau_D(R,D+\Delta,\ba^t)$.
\end{defn}
\begin{defn}
	Suppose that $(R,\m)$ is local and $\dim R=d$.
	For an $R$-module $M$, $0_{M}^{*_DD+\Delta,\ba^t}$ is an $R$-submodule of $M$ defined as follows:
	$z\in 0_{M}^{*_DD+\Delta,\ba^t}$ if and only if there exists an element $c\in R^{\circ,D}$ such that
	\[
		F^e_*(c\ba^{\lceil tp^e\rceil})\otimes z=0 \in F^e_*R((p^e-1)D+\lceil p^e\Delta\rceil)\otimes_R M.
	\]
\end{defn}
\begin{prop}[{cf. \cite[Proposition 8.23]{HH90}}] \label{divisorial test ideal characterization}
	Suppose that $R$ is local. The following ideals are equal to each other.
	\begin{enumerate}
		\item $\tau_D(R,D+\Delta,\ba^t)$.
		\item \[
			\bigcap_{M} \Ann_R 	0_{M}^{*_DD+\Delta,\ba^t},
		\]
		where $M$ runs through all $R$-modules.
		\item \[
		\Ann_R 0_{E}^{*_DD+\Delta,\ba^t},
		\]
		where $E=E_R(R/\m)$ is an injective hull of the residue field $R/\m$. 
	\end{enumerate}
\end{prop}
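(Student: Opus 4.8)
The plan is to imitate the proof of the corresponding statement for test ideals, \cite[Proposition 8.23]{HH90}, in the divisorial form of \cite{Tak08} (where the statement is in substance already contained). Throughout I would use that $E=E_R(R/\m)$ is an injective cogenerator for $R$, that $\Ann_R\Hom_R(R/J,E)=J$ for every ideal $J\subseteq R$, and that there is a natural isomorphism $W\otimes_R E\cong\Hom_R(\Hom_R(W,R),E)$ for every finitely generated $R$-module $W$ (each $F^e_*R(L)$ being finitely generated over $R$ since $R$ is $F$-finite). The inclusion $(2)\subseteq(3)$ is immediate, $E$ being one of the modules $M$. For $(3)\subseteq(2)$, let $c\in\Ann_R 0_E^{*_D D+\Delta,\ba^t}$, let $M$ be an arbitrary $R$-module and $z\in 0_M^{*_D D+\Delta,\ba^t}$, and suppose $cz\ne 0$; then $Rcz$ is a nonzero cyclic $R$-module, so it surjects onto $R/\m\hookrightarrow E$, and extending along $Rcz\hookrightarrow M$ by injectivity of $E$ gives $g\in\Hom_R(M,E)$ with $g(cz)\ne 0$. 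Applying $\mathrm{id}\otimes g$ to the relations witnessing $z\in 0_M^{*_D D+\Delta,\ba^t}$ shows $g(z)\in 0_E^{*_D D+\Delta,\ba^t}$, so $g(cz)=c\,g(z)=0$, a contradiction. Hence $c$ annihilates $0_M^{*_D D+\Delta,\ba^t}$ for every $M$, and $(2)=(3)$.

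For $(3)\subseteq(1)$ I would prove that any ideal $J\subseteq R$ satisfying conditions (1) and (2) in the definition of $\tau_D(R,D+\Delta,\ba^t)$ contains $J_0:=\Ann_R 0_E^{*_D D+\Delta,\ba^t}$, and then apply this with $J=\tau_D(R,D+\Delta,\ba^t)$. Set $N:=\Hom_R(R/J,E)=(0:_E J)$, so $\Ann_R N=J$; it then suffices to show $N\subseteq 0_E^{*_D D+\Delta,\ba^t}$. Choose $c\in J\cap R^{\circ,D}$ using condition (1), and fix $z\in N$. For each $e\ge 0$ and each $b\in\ba^{\lceil tp^e\rceil}$, the isomorphism above identifies $F^e_*R((p^e-1)D+\lceil p^e\Delta\rceil)\otimes_R E$ with the Matlis dual of $\Hom_R(F^e_*R((p^e-1)D+\lceil p^e\Delta\rceil),R)$, so $F^e_*(cb)\otimes z=0$ in the former if and only if $\psi(F^e_*(cb))\,z=0$ for every $\psi$ in the latter. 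Now $\lceil(p^e-1)(D+\Delta)\rceil\le(p^e-1)D+\lceil p^e\Delta\rceil$ and $\ba^{\lceil tp^e\rceil}J\subseteq\ba^{\lceil t(p^e-1)\rceil}J$, so the restriction of such a $\psi$ to $F^e_*R(\lceil(p^e-1)(D+\Delta)\rceil)$ lies in the $\Hom$-module occurring in condition (2), which therefore gives $\psi(F^e_*(cb))\in J\subseteq\Ann_R z$; hence $\psi(F^e_*(cb))\,z=0$. Thus $z\in 0_E^{*_D D+\Delta,\ba^t}$, and $J=\Ann_R N\supseteq J_0$.

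For $(1)\subseteq(3)$ it then remains, by minimality of $\tau_D(R,D+\Delta,\ba^t)$, to check that $J_0$ itself satisfies conditions (1) and (2). Condition (1), i.e.\ $J_0\cap R^{\circ,D}\ne\emptyset$, is the existence of a divisorial test element, which is standard since $R$ is a reduced $F$-finite (hence excellent) ring essentially of finite type over a field; see \cite{Tak08}. Condition (2) — the $\Hom_R(F^e_*R(\lceil(p^e-1)(D+\Delta)\rceil),R)$-stability of $J_0$ — is the divisorial counterpart of the corresponding compatibility property of the ordinary (big) test ideal from \cite{HH90}; it follows from the structure theory of the big divisorial test ideal in \cite{Tak08}, in particular from its description in terms of a fixed test element and the compatibility of the relevant $\Hom$-modules and of the twists $(p^e-1)D+\lceil(p^e-1)\Delta\rceil$ under iterated Frobenii. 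Combining the three steps yields $(1)=(2)=(3)$.

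The step I expect to be the main obstacle is this last one, the $\Hom$-stability of $J_0=\Ann_R 0_E^{*_D D+\Delta,\ba^t}$: conceptually it is identical to the $\Q$-Gorenstein (or empty-boundary) case treated in \cite{HH90} and \cite{Tak08}, but one must carry the prime divisor $D$, the boundary $\Delta$, the ideal $\ba$ and the real coefficient $t$ through every iteration of the Frobenius, and reconcile the two slightly different normalizations appearing in the definitions of $\tau_D(R,D+\Delta,\ba^t)$ and of $0_{-}^{*_D D+\Delta,\ba^t}$ — namely $(p^e-1)\Delta$ versus $p^e\Delta$ in the twists, and $\lceil t(p^e-1)\rceil$ versus $\lceil tp^e\rceil$ in the exponents — which is where the real work lies.
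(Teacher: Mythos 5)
Your overall strategy is the standard Matlis-duality argument that the paper's citation ``cf.\ \cite[Proposition 8.23]{HH90}'' points to (the paper itself supplies no proof), and the parts you actually carry out are correct: the equality of (2) and (3) via extending a map on $Rcz$ through the injective cogenerator $E$, and the inclusion $\Ann_R 0_{E}^{*_DD+\Delta,\ba^t}\subseteq J$ for \emph{every} ideal $J$ satisfying the two defining conditions (hence $\subseteq\tau_D(R,D+\Delta,\ba^t)$). In that step your use of $W\otimes_R E\cong\Hom_R(\Hom_R(W,R),E)$ for the finitely presented module $W=F^e_*R((p^e-1)D+\lceil p^e\Delta\rceil)$ is legitimate ($R$ is $F$-finite), and the two normalization inequalities $\lceil(p^e-1)(D+\Delta)\rceil\le(p^e-1)D+\lceil p^e\Delta\rceil$ and $\lceil t(p^e-1)\rceil\le\lceil tp^e\rceil$ indeed point in the right direction for this inclusion, so restricting $\psi$ and invoking condition (2) of the definition is sound.

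The genuine gap is the other half, $\tau_D(R,D+\Delta,\ba^t)\subseteq\Ann_R 0_{E}^{*_DD+\Delta,\ba^t}$: you reduce it to showing that $J_0=\Ann_R 0_{E}^{*_DD+\Delta,\ba^t}$ satisfies conditions (1) and (2), and then only gesture at ``structure theory'' in \cite{Tak08}/\cite{HH90}, while yourself conceding that this is where the real work lies. Note that here the normalization mismatches run the \emph{wrong} way, so the restriction trick from the previous step is unavailable; what is actually needed is the existence of a big sharp divisorial test element, i.e.\ some $c\in R^{\circ,D}$ which witnesses membership in $0_{E}^{*_DD+\Delta,\ba^t}$ uniformly (for all $z$ at once), together with the resulting generation formula $\tau_D(R,D+\Delta,\ba^t)=\sum_{e\ge 0}\sum_{\phi}\phi\bigl(F^e_*(c\,\ba^{\lceil t(p^e-1)\rceil})\bigr)$, valid with $c$ replaced by $cd$ for any $d\in R^{\circ,D}$. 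Granting this, condition (1) for $J_0$ is immediate, and condition (2) (equivalently $\tau_D\cdot 0_E^{*}=0$) follows by computing $\phi(F^e_*(ca'))\,z=(\phi\otimes\mathrm{id}_E)\bigl(F^e_*(ca')\otimes z\bigr)=0$, after first multiplying $c$ by an element of $\ba^{\lceil t\rceil}\cap R^{\circ,D}$ to absorb the discrepancy between $\lceil t(p^e-1)\rceil$ and $\lceil tp^e\rceil$ (and using $\lceil(p^e-1)(D+\Delta)\rceil\le(p^e-1)D+\lceil p^e\Delta\rceil$ once more). Since the conventions in this paper ($\lceil p^e\Delta\rceil$ and $\lceil tp^e\rceil$ in $0^*$, versus the $(p^e-1)$-normalizations in the definition of $\tau_D$) do not literally match those of \cite{Tak08}, a complete writeup must supply this test-element input and the bookkeeping rather than cite it wholesale; as it stands, one of the two inclusions of the proposition is asserted, not proved.
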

\begin{rem}
	The formation of divisorial test ideals commute with localization. Gluing together, we obtain divisorial test ideals for any $F$-finite normal integral schemes.
\end{rem}
 Let $X$ be a normal variety over $\C$, $D$ be an effective Weil divisor on $X$ and $\ba\subseteq \sO_X$ be a coherent ideal sheaf. $(A,X_A,D_A,\ba_A)$ is said to be a {\it model} of $(X,D,\ba)$ if the following conditions hold:
 \begin{enumerate}
 	\item $A$ is a finitely generated $\Z$-subalgebra of $\C$.
 	\item $X_A$ is a scheme of finite type over $A$ such that $X_A\times_{\Spec A}\Spec \C\cong X$.
 	\item $D_A$ is a closed subscheme of $X_A$ such that $D_A\times_{\Spec A}\C \cong D$.
 	\item $\ba_A\subseteq \sO_{X_A}$ is a coherent ideal sheaf such that $\ba_A\sO_X=\ba$.
 \end{enumerate}
 Enlarging $A$, we may assume that $X_A$ is normal and $D_A$ is a Weil divisor on $X_A$. For any closed point $\mu\in \Spec A$, let $X_\mu:=X_A\times_{\Spec A}\Spec A/\mu$, $D_\mu:=D_A\times_{\Spec A}\Spec A/\mu$ and $\ba_\mu:=\ba_A\sO_{X_\mu}$. $X_\mu$, $D_\mu$ and $\ba_\mu$ are said to be a {\it reduction modulo $p>0$} of $X$, $D$ and $\ba$ respectively. A reduction modulo $p>0$ of $\Q$-Weil divisors are defined similarly. In the same framework, we can reduce finitely many tuples of varieties, $\Q$-Weil divisors and ideals to characteristic $p>0$. 
 
 Adjoint ideals are corresponding to divisorial test ideals under reduction modulo $p>0$.
 \begin{prop}[{\cite[Theorem 7.3]{TY24}}]
 	Suppose that $X$ is a normal variety over $\C$, $D$ is a reduced divisor on $X$, $\Delta$ is an effective $\Q$-Weil divisor on $X$ such that $D$ and $\Delta$ have no common components and $\bigoplus_{i\ge0}\sO_X(\lfloor -i(K_X+D+\Delta)\rfloor)$ is finitely generated. Let $\ba\subseteq \sO_X$ be a coherent ideal sheaf whose zero locus contains no components of $D$ and $t>0$ be a real number. Then
 	\[
 		\adj_D(X,D+\Delta,\ba^t)_{\mu}=\tau_{D_\mu}(X_\mu,D_\mu+\Delta_\mu,\ba_\mu^t)
 	\]
 	for general closed point $\mu\in \Spec A$.
 \end{prop}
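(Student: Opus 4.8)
The plan is to reduce to the case where $K_X+D+\Delta$ is $\Q$-Cartier, in which case the asserted equality is the ($D$, $\ba^t$)-version of Takagi's comparison of reductions of adjoint ideals with divisorial test ideals (\cite{Tak08}, \cite{Tak13}), proved through a common log resolution, Grauert--Riemenschneider-type vanishing realizing $\adj_D(X,D+\Delta,\ba^t)$ as a direct image on the resolution, and the Frobenius-trace description of $\tau_{D_\mu}$ combined with a uniform generation argument; the case $D=0$, $\ba=\sO_X$ of the whole statement is \cite{CEMS18}, and the reduction below follows their method. To carry out the reduction I would use that $\mathscr{R}:=\bigoplus_{i\ge0}\sO_X(\lfloor -i(K_X+D+\Delta)\rfloor)$ is a finitely generated $\sO_X$-algebra: the relative Proj $\pi\colon Y:=\Proj_X\mathscr{R}\to X$ is then a proper birational morphism which is an isomorphism over the smooth locus of $X$, hence in codimension one, and for some $N>0$ with $N(K_X+D+\Delta)$ a Weil divisor the twisting sheaf $\sO_Y(N)$ is an invertible $\pi$-ample sheaf isomorphic to $\sO_Y(-N(K_Y+D_Y+\Delta_Y))$, where $D_Y,\Delta_Y$ denote the birational transforms of $D,\Delta$; in particular $K_Y+D_Y+\Delta_Y$ is $\Q$-Cartier. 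After enlarging the base ring $A$, I may assume $\pi$, $D_Y$, $\Delta_Y$, $\sO_Y(N)$ and $\ba\sO_Y$ descend over $A$, so that for general closed $\mu\in\Spec A$ the reduction $\pi_\mu\colon Y_\mu\to X_\mu$ is again a small modification with $K_{Y_\mu}+D_{Y_\mu}+\Delta_{Y_\mu}$ $\Q$-Cartier and with $\mathscr{R}_\mu$ the reduction of $\mathscr{R}$. (Equivalently one may work with the anti-log-canonical cover $\Spec\mathscr{R}$, as in \cite{CEMS18}.)

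The argument then rests on two facts about $\pi$. First, in characteristic zero, $\adj_D(X,D+\Delta,\ba^t)=\pi_*\adj_{D_Y}(Y,D_Y+\Delta_Y,(\ba\sO_Y)^t)$; this expresses that $Y$ is a $\Q$-Gorenstein modification of $(X,D+\Delta)$ and follows from \cite{dFH09} together with the preceding Proposition, since the pullback of $K_Y+D_Y+\Delta_Y$ to a common log resolution $\rho\colon\tilde X\to X$ realizes, and attains, the infimum of $\rho^*(K_X+D+\Delta+\Delta')$ over all $\Delta'$ with $K_X+D+\Delta+\Delta'$ $\Q$-Cartier. Second, in characteristic $p$, the formation of divisorial test ideals is compatible with the small modification $\pi_\mu$, namely $\tau_{D_\mu}(X_\mu,D_\mu+\Delta_\mu,\ba_\mu^t)=\pi_{\mu,*}\tau_{D_{Y_\mu}}(Y_\mu,D_{Y_\mu}+\Delta_{Y_\mu},(\ba\sO_{Y_\mu})^t)$. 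Granting both, the theorem follows: the $\Q$-Cartier case applied on $Y_\mu$ gives $\adj_{D_Y}(Y,D_Y+\Delta_Y,(\ba\sO_Y)^t)_\mu=\tau_{D_{Y_\mu}}(Y_\mu,D_{Y_\mu}+\Delta_{Y_\mu},(\ba\sO_{Y_\mu})^t)$ for general $\mu$, and since $\pi_*$ commutes with reduction modulo $p$ for general $\mu$, combining this with the two facts yields
\[
\adj_D(X,D+\Delta,\ba^t)_\mu=\pi_{\mu,*}\tau_{D_{Y_\mu}}(Y_\mu,D_{Y_\mu}+\Delta_{Y_\mu},(\ba\sO_{Y_\mu})^t)=\tau_{D_\mu}(X_\mu,D_\mu+\Delta_\mu,\ba_\mu^t).
\]

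The hard part will be the second fact. Since $\pi_\mu$ is an isomorphism in codimension one and the sheaves $R_\mu$, $\sO_{X_\mu}(\lceil(p^e-1)(D_\mu+\Delta_\mu)\rceil)$ and their Frobenius pushforwards are reflexive, the modules $\Hom_{R_\mu}(F^e_*R_\mu(\lceil(p^e-1)(D_\mu+\Delta_\mu)\rceil),R_\mu)$ and $\Hom_{\sO_{Y_\mu}}(F^e_*\sO_{Y_\mu}(\lceil(p^e-1)(D_{Y_\mu}+\Delta_{Y_\mu})\rceil),\sO_{Y_\mu})$ are identified compatibly in $e$ through $\pi_{\mu,*}$, so the defining systems of conditions for the two divisorial test ideals correspond; the delicate points are to match the multiplier terms $(\ba_\mu)^{\lceil t(p^e-1)\rceil}$ and $(\ba\sO_{Y_\mu})^{\lceil t(p^e-1)\rceil}$, which agree only away from a codimension-two locus (one may first replace $\ba$ by a blow-up making $\ba\sO_Y$ invertible, which changes no adjoint ideal), and to verify that $\pi_{\mu,*}$ carries the smallest ideal sheaf on $Y_\mu$ satisfying the test-ideal conditions to the smallest such ideal on $X_\mu$, which rests on $\pi_{\mu,*}\sO_{Y_\mu}=\sO_{X_\mu}$. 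Equivalently, what must be established is a characteristic-$p$ analogue of \cite{dFH09} (carried out in the non-adjoint setting in \cite{CEMS18}): that $\tau_{D_\mu}(X_\mu,D_\mu+\Delta_\mu,\ba_\mu^t)=\tau_{D_\mu}(X_\mu,D_\mu+\Delta_\mu+\Delta'_\mu,\ba_\mu^t)$ for the reduction of the divisor $\Delta'$ furnished by the preceding Proposition; here the inclusion $\supseteq$ is immediate, as enlarging the boundary shrinks the divisorial test ideal, whereas $\subseteq$ is exactly where the finite generation of $\mathscr{R}$ — equivalently the existence of the modification $Y_\mu$ — is used.
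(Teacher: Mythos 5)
This proposition is not proved in the paper at all: it is imported verbatim from \cite[Theorem 7.3]{TY24}, so there is no internal argument to compare yours with; the closest in-house technique is the one used for the ultra analogue (Propositions \ref{inifinite cyclic covers tau} and \ref{Q-Gorenstein case}), namely the cyclic cover $\Spec\mathscr{R}$ and a Watanabe-style comparison of top local cohomologies, rather than your relative $\Proj$. Your reduction framework itself --- pass to the small modification $Y=\Proj_X\mathscr{R}$, where $K_Y+D_Y+\Delta_Y$ is $\Q$-Cartier and anti-$\pi$-ample, apply the $\Q$-Cartier comparison there, and descend --- is the \cite{CEMS18}-style route and is sensible; the characteristic-zero half (your ``first fact'' $\adj_D(X,D+\Delta,\ba^t)=\pi_*\adj_{D_Y}(Y,D_Y+\Delta_Y,(\ba\sO_Y)^t)$, via a general member of $|-N(K_Y+D_Y+\Delta_Y)|$ and the preceding Proposition) is standard, as is spreading out $\pi$ and the equality $\pi_{\mu*}$-compatibly over a general $\mu$.

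The gap is that the entire characteristic-$p$ content --- the inclusion $\tau_{D_\mu}(X_\mu,D_\mu+\Delta_\mu,\ba_\mu^t)\subseteq\adj_D(X,D+\Delta,\ba^t)_\mu$, equivalently $\tau_{D_\mu}(X_\mu,D_\mu+\Delta_\mu,\ba_\mu^t)\subseteq\tau_{D_\mu}(X_\mu,D_\mu+\Delta_\mu+\Delta'_\mu,\ba_\mu^t)$ --- is exactly your ``second fact,'' and you do not prove it: you state it, note which inclusion is trivial, and attribute the rest to the method of \cite{CEMS18}. Moreover your sketch of it is partly misdirected. You ask for the full equality $\pi_{\mu*}\tau_{D_{Y_\mu}}(Y_\mu,\cdots)=\tau_{D_\mu}(X_\mu,\cdots)$, but the inclusion $\pi_{\mu*}\tau_{D_{Y_\mu}}(Y_\mu,\cdots)\subseteq\tau_{D_\mu}(X_\mu,\cdots)$ is both genuinely hard (minimality of the test ideal on $Y_\mu$ is tested against local sections of the Cartier algebra on open subsets of $Y_\mu$, which do not come from $X_\mu$, and there is no vanishing theorem in characteristic $p$ to push a ``smallest stable ideal'' down a birational morphism) and unnecessary: all that is needed is $\tau_{D_\mu}(X_\mu,\cdots)\subseteq\pi_{\mu*}\tau_{D_{Y_\mu}}(Y_\mu,\cdots)$, which is the direction your reflexivity argument is suited for --- identify $\Hom_{\sO_{X_\mu}}\bigl(F^e_*\sO_{X_\mu}(\lceil(p^e-1)(D_\mu+\Delta_\mu)\rceil),\sO_{X_\mu}\bigr)$ with the global sections of the corresponding $\sHom$ on $Y_\mu$ using smallness, check that $\pi_{\mu*}\tau_{D_{Y_\mu}}(Y_\mu,\cdots)$ is stable under all these maps (here $\ba_\mu\subseteq H^0(Y_\mu,\ba\sO_{Y_\mu})$ suffices, so no principalization of $\ba$ is needed; blowing up $\ba\sO_Y$ would in fact destroy the smallness on which the identification rests) and meets $R^{\circ,D}$, and invoke minimality in the definition of $\tau_{D_\mu}$ --- but none of this is carried out in your proposal. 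As written, the argument reduces the proposition to an unproved statement of essentially the same depth; it would need to be completed along the above lines, or replaced by the cyclic-cover local-cohomology comparison used in \cite{TY24} and mirrored in this paper, before it counts as a proof.
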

\subsection{Ultraproducts} In this subsection, we recall the definition of ultraproducts and their basic properties. We refer the reader to \cite{Schoutens03} and \cite[Section 3]{Yam23} for details.

Let $\mathcal{P}$ be the set of prime numbers.
\begin{defn}
	A non-empty subset $\mathcal{F}$ of the power set of $\mathcal{P}$ is said to be a {\it non-principal ultrafilter} on $\mathcal{P}$ if the following four conditions hold:
	\begin{enumerate}
		\item If $A, B\in \mathcal{F}$, then $A\cap B\in \mathcal{F}$.
		\item If $A\in \F$ and $A\subseteq B \subseteq \mathcal{P}$, then $B\in \mathcal{F}$.
		\item If $A$ is a finite subset of $\mathcal{P}$, then $A\notin \mathcal{P}$.
		\item If $A$ is a subset of $\mathcal{P}$, then we have $A\in \mathcal{F}$ or $\mathcal{P}\setminus A\in \mathcal{F}$.
	\end{enumerate}
\end{defn}
\begin{defn}
	Fix a non-principal ultrafilter on $\mathcal{P}$. For any property $\phi$ on $\mathcal{P}$, we say that $\phi(p)$ holds {\it for almost all $p$} if $\{p\in \mathcal{P}|\text{$\phi(p)$ holds}\}\in \mathcal{F}$. This depends on the choice of $\mathcal{F}$.
\end{defn}
\begin{defn}
	Fix a non-principal ultrafilter $\mathcal{F}$ on $\mathcal{P}$.
	Let $(A_p)_p$ be a family of non-empty sets indexed by $\mathcal{P}$. The {\it ultraproduct} $A_\infty$ of $(A_p)_p$ is defined as
	\[
		A_\infty=\ulim_p A_p:=\left(\prod_p A_p\right)/\sim,
	\]
	where $\sim$ is the equivalent relation on $\prod_p A_p$ such that
	\[
		(a_p)\sim (b_p)
	\]
	if and only if $\{p\in\mathcal{P}|a_p=b_p\}\in \mathcal{F}$. The equivalent class of a sequence $(a_p)\in \prod_p A_p$ is denoted by $\ulim_p a_p$.
\end{defn}
\begin{rem}
	If each $A_p$ is a ring, then $A_\infty$ is a quotient ring of $\prod_p A_p$. If each $M_p$ is an $A_p$-module, then $M_\infty$ is an $A_\infty$-module. See \L o\'{s}'s theorem for details.
\end{rem}
\begin{notation}
	We use ${^*}\N$ (resp. ${^*}\Z, {^*}\Q$) to denote $\ulim_p \N$ (resp. $\ulim_p \Z$, $\ulim_p\Q$) and use $\pi$ to denote $\ulim_p p\in {^*}\N$. Note that ${^*}\N$ is an ordered semiring and ${^*}\Z$, ${^*}\Q$ are ordered rings.
\end{notation}
	The following theorem is a key observation to apply ultraproducts to commutative algebra.
\begin{thm}
	Let $\overline{\mathbb{F}_p}$ be an algebraic closure of the finite field $\mathbb{F}_p$ for $p\in \mathcal{P}$. Then we have a field isomorphism
	\[
		\ulim_p \overline{\mathbb{F}_p}\cong \C.
	\]
\end{thm}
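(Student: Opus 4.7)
The plan is to establish three properties of $K := \ulim_p \overline{\mathbb{F}_p}$ and then invoke Steinitz's theorem on the isomorphism type of algebraically closed fields. Specifically, I would show that $K$ is (i) algebraically closed, (ii) of characteristic zero, and (iii) of cardinality $2^{\aleph_0}$. Since $\C$ enjoys the same three properties, Steinitz's theorem will produce a (non-canonical) isomorphism $K \cong \C$.

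For (i), \L o\'{s}'s theorem immediately implies that $K$ is a field, since being a field is expressible in first-order logic. Algebraic closure is the conjunction, over each fixed $n \geq 1$, of the first-order statement that every monic polynomial of degree $n$ has a root; each such statement holds in every $\overline{\mathbb{F}_p}$, and hence passes to $K$. For (ii), fix any prime $\ell$: the set $\{p \in \mathcal{P} : p \neq \ell\}$ is cofinite and thus lies in $\F$ by non-principality, and since $\ell$ is invertible in $\overline{\mathbb{F}_p}$ whenever $p \neq \ell$, it follows that $\ell$ is invertible in $K$. Hence $\chara K = 0$.

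For (iii), the upper bound $|K| \leq |\prod_p \overline{\mathbb{F}_p}| = \aleph_0^{\aleph_0} = 2^{\aleph_0}$ is immediate because $K$ is a quotient of the product. For the lower bound, I would fix a countable enumeration of each $\overline{\mathbb{F}_p}$ and note that two functions $f, g : \mathcal{P} \to \N$ induce the same element of $K$ precisely when $\{p : f(p) = g(p)\} \in \F$; a standard combinatorial argument on the ultrafilter then yields $2^{\aleph_0}$ pairwise $\F$-inequivalent functions, giving $|K| \geq 2^{\aleph_0}$.

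With (i)--(iii) in hand, Steinitz's classification theorem---any two algebraically closed fields of the same characteristic and the same uncountable cardinality are isomorphic---applies to $K$ and $\C$ to produce the desired field isomorphism. The main obstacle, and the only step not a direct application of \L o\'{s}'s theorem, is the lower bound in (iii); however, this is a classical and well-documented property of ultraproducts over a countable index set with a non-principal ultrafilter, so I do not anticipate essential difficulty beyond invoking the relevant combinatorial lemma.
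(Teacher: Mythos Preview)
Your proposal is correct and is precisely the standard argument for this classical result. The paper, however, does not supply its own proof: the theorem is stated in the preliminaries as background (it is a well-known fact in the literature on ultraproducts and the Lefschetz principle), and the paper immediately proceeds to use the isomorphism without justification. So there is nothing to compare against; your sketch simply fills in what the paper takes for granted.

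One minor remark on step (iii): the lower bound you allude to can be made completely explicit without invoking any black-box lemma. Enumerate $\mathcal{P}=\{p_1,p_2,\ldots\}$ and for each $\sigma\in\{0,1\}^{\N}$ define $f_\sigma(p_n)$ to be the element of $\overline{\mathbb{F}_{p_n}}$ indexed by the integer $\sum_{i<n}\sigma(i)2^i$ under your chosen enumeration. If $\sigma\neq\tau$, then $f_\sigma$ and $f_\tau$ differ on a cofinite set of primes, hence represent distinct elements of $K$; this already gives $2^{\aleph_0}$ elements. With that in hand, your invocation of Steinitz (uncountable algebraically closed fields of the same characteristic and cardinality are isomorphic, since cardinality determines transcendence degree in the uncountable case) is entirely routine.
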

Using this isomrphism, for any $n\in \N$, we can construct a $\C$-algebra homomorphism
\[
	\C[X_1,\dots,X_n]\to \ulim_p (\overline{\mathbb{F}_p}[X_1,\dots,X_n]),
\]
where $X_i$ is mapped to $X_i:=\ulim_p (X_i)_p$. We use $\C[X_1,\dots,X_n]_\infty$ to denote $\ulim_p (\overline{\mathbb{F}_p}[X_1,\dots,X_n])$.
\begin{thm}[\cite{vdD79}]
	The above morphism is faithfully flat.
\end{thm}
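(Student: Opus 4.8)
Write $R=\mathbb{C}[X_1,\dots,X_n]$, $R_p=\overline{\mathbb{F}_p}[X_1,\dots,X_n]$ and $S=\ulim_p R_p$, so that the map in question sends a polynomial with coefficients $c_\alpha=\ulim_p c_{\alpha,p}$ to $\ulim_p\bigl(\sum_\alpha c_{\alpha,p}X^\alpha\bigr)$. The plan is to separate faithfulness, which is elementary, from flatness, which carries the real content and rests on a characteristic-free degree bound in polynomial ideal theory.

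For faithfulness I would first note that for each $d\ge 0$ the restriction $R_{\le d}\to\ulim_p(R_p)_{\le d}$ to polynomials of degree $\le d$ is a $\mathbb{C}$-linear map between spaces of the same finite dimension $\binom{n+d}{n}$ carrying the monomial basis to the monomial basis (linear independence of the image follows from \L o\'{s}'s theorem applied to the finitely many coefficients), hence an isomorphism; letting $d\to\infty$ gives injectivity of $R\to S$. Next, for any maximal ideal $\mathfrak m=(X_1-a_1,\dots,X_n-a_n)$ of $R$ (Nullstellensatz), writing $a_i=\ulim_p a_{i,p}$ with $a_{i,p}\in\overline{\mathbb{F}_p}$, the ultraproduct of the evaluation homomorphisms $R_p\to\overline{\mathbb{F}_p}$ at $(a_{1,p},\dots,a_{n,p})$ is a surjection $S\to\mathbb{C}$ killing each $X_i-a_i$, so $\mathfrak m S\neq S$. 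Since a flat ring map is faithfully flat as soon as $\mathfrak m S\neq S$ for every maximal ideal $\mathfrak m$ of the base, faithfulness will follow once flatness is established.

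For flatness I would use the equational criterion: it suffices to show that for every $(a_1,\dots,a_r)\in R^r$, every $(s_1,\dots,s_r)\in S^r$ with $\sum_i a_i s_i=0$ is an $S$-linear combination of tuples in $R^r$ satisfying the same relation. The $R$-module $N$ of $R$-solutions of $\sum_i a_i Y_i=0$ is finitely generated since $R$ is Noetherian; fix generators $\tau_1,\dots,\tau_\ell\in R^r$. The finitely many coefficients occurring in the $a_i$ and the $\tau_l$ have bounded degree, so I reduce them modulo $p$ via $\mathbb{C}\cong\ulim_p\overline{\mathbb{F}_p}$ to obtain $a_{i,p}\in R_p$, $\tau_{l,p}\in R_p^r$ with $a_i=\ulim_p a_{i,p}$, $\tau_l=\ulim_p\tau_{l,p}$; unwinding the ring structure of $S=\ulim_p R_p$, for almost all $p$ each $\tau_{l,p}$ solves $\sum_i a_{i,p}Y_i=0$ over $R_p$. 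The plan is then to prove the key claim that \emph{for almost all $p$, the tuples $\tau_{1,p},\dots,\tau_{\ell,p}$ generate the $R_p$-module of solutions of $\sum_i a_{i,p}Y_i=0$}. Granting this, flatness is immediate: given an $S$-solution $(s_1,\dots,s_r)$, write $s_j=\ulim_p s_{j,p}$; then $(s_{1,p},\dots,s_{r,p})$ is an $R_p$-solution for almost all $p$, so $(s_{j,p})_j=\sum_l c_{l,p}\tau_{l,p}$ for some $c_{l,p}\in R_p$ there, and $c_l:=\ulim_p c_{l,p}\in S$ yields $(s_j)_j=\sum_l c_l\tau_l$ --- note that no degree control on the $s_{j,p}$ or $c_{l,p}$ is required, since the ultraproduct absorbs it.

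The hard part will be the key claim, and this is where the genuine input enters. The assertion ``$\tau_1,\dots,\tau_\ell$ generate $N$'' is a priori infinitary, involving solutions of unbounded degree, so it cannot be transferred to almost all $p$ by a bare application of \L o\'{s}'s theorem; a naive attempt to do so is in fact circular with flatness. The remedy is a characteristic-free degree bound in polynomial ideal theory: there is a number $D$ depending only on $n$, $r$ and $\max_i\deg a_i$, but on neither $p$ nor the field $\overline{\mathbb{F}_p}$, such that over any field the solution module of $\sum_i a_i Y_i=0$ is generated by solutions of degree $\le D$. With $D$ fixed, ``$\tau_{1,p},\dots,\tau_{\ell,p}$ generate the solution module over $R_p$'' becomes equivalent, inside the finite-dimensional space of degree-$\le D$ tuples, to the equality of the span of the $\tau_{l,p}$ with the space of degree-$\le D$ solutions; this is a rank equality among matrices with entries polynomial in the coefficients of the $a_i$, i.e.\ membership of the coefficient vector in a constructible subset of affine space defined over $\mathbb{Z}$. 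That membership holds at the $\mathbb{C}$-point, since $\tau_1,\dots,\tau_\ell$ generate $N$ and every solution has degree $\le D$, hence holds for almost all $p$ by \L o\'{s}'s theorem --- completing the claim and with it the proof. So the only real obstacle is establishing (or citing, from \cite{vdD79}) this uniform bound; everything else is bookkeeping with ultraproducts.
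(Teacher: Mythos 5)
The paper itself gives no argument for this statement: it is quoted from van den Dries \cite{vdD79}, so your proposal has to be judged against the standard proof found there (and in van den Dries--Schmidt). Your outline is essentially that standard proof, and it is correct in structure: faithfulness via survival of maximal ideals (the evaluation maps $\ulim_p R_p\to\ulim_p\overline{\mathbb{F}_p}\cong\C$ do show $\mathfrak m S\neq S$), and flatness via the equational criterion, reduction of the data $a_i,\tau_l$ modulo $p$, and a transfer of the statement ``the $\tau_{l,p}$ generate the syzygy module of $(a_{1,p},\dots,a_{r,p})$'' to almost all $p$. You also correctly identify that the transfer is not a bare application of \L o\'{s}'s theorem and that the real input is a field-independent degree bound.

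Two caveats, both fixable. First, be careful where you source the uniform bound: in the nonstandard approach of van den Dries and van den Dries--Schmidt the logical order is typically the reverse of yours --- faithful flatness of $\C[X_1,\dots,X_n]\to\ulim_p\overline{\mathbb{F}_p}[X_1,\dots,X_n]$ is established first (by a direct algebraic argument) and the uniform bounds are then \emph{deduced} from it; so ``citing, from \cite{vdD79}, this uniform bound'' risks circularity. You should instead invoke the classical, characteristic-free bound of G.~Hermann (or Seidenberg), whose elimination-theoretic proof is independent of ultraproducts; with that citation your argument is non-circular. Second, your constructibility step is slightly loose: ``the span of the $\tau_{l,p}$ equals the space of degree-$\le D$ solutions'' is not by itself a finite-dimensional rank condition, because membership in the $R_p$-module generated by the $\tau_{l,p}$ involves multipliers of a priori unbounded degree. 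The fix is either to quote Hermann's bound also for the module-membership problem, or (simpler) to fix any $D'$ such that over $\C$ every degree-$\le D$ solution is a combination of the $\tau_l$ with multipliers of degree $\le D'$, and transfer the resulting rank statement (image of the degree-$\le D'$ multiplier map contains the degree-$\le D$ solution space) with that fixed $D'$; combined with the uniform generation bound $D$ over $\overline{\mathbb{F}_p}$ this yields your key claim, and the rest of the bookkeeping goes through as you describe.
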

	Moreover, the following statement holds.
\begin{prop}[\cite{vdD79}]
	If $\mathfrak{p}$ is a prime ideal of $\C[X_1,\dots,X_n]$, then $\mathfrak{p}\C[X_1,\dots,X_n]_\infty$ is also prime.
\end{prop}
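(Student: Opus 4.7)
The plan is to reduce the primality of $\mathfrak{p}\,\C[X_1,\dots,X_n]_\infty$ to primality of reductions of $\mathfrak{p}$ modulo almost all $p$ via \L o\'{s}'s theorem, and then to establish the latter by spreading out to a finitely generated $\Z$-subalgebra of $\C$ and invoking openness (spreading-out) of the geometrically-integral locus.

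First, since $\C[X_1,\dots,X_n]$ is Noetherian, write $\mathfrak{p}=(f_1,\dots,f_k)$ and fix representatives $(f_i)_p \in \overline{\FF_p}[X_1,\dots,X_n]$ with $f_i = \ulim_p (f_i)_p$ under the embedding from the excerpt. Setting $\mathfrak{p}_p := ((f_1)_p,\dots,(f_k)_p) \subseteq \overline{\FF_p}[X_1,\dots,X_n]$, \L o\'{s}'s theorem identifies
\[
\C[X_1,\dots,X_n]_\infty / \mathfrak{p}\,\C[X_1,\dots,X_n]_\infty \;\cong\; \ulim_p \overline{\FF_p}[X_1,\dots,X_n]/\mathfrak{p}_p.
\]
Because an ultraproduct of domains is a domain (another direct application of \L o\'{s}), it suffices to show that $\mathfrak{p}_p$ is prime for almost all $p$.

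To prove this, spread $\mathfrak{p}$ out to a finitely generated $\Z$-subalgebra $A \subseteq \C$ containing the coefficients of the $f_i$'s, and set $\mathfrak{p}_A := (f_1,\dots,f_k) \subseteq A[X_1,\dots,X_n]$. The injection $A[X_1,\dots,X_n]/\mathfrak{p}_A \hookrightarrow \C[X_1,\dots,X_n]/\mathfrak{p}$ exhibits the left-hand side as a subring of a domain, so $\mathfrak{p}_A$ is prime. Base-changing via an embedding $\overline{\mathrm{Frac}(A)} \hookrightarrow \C$ (available since both fields are algebraically closed of characteristic zero and $\overline{\mathrm{Frac}(A)}$ has countable transcendence degree over $\Q$), the same argument shows that the generic fiber of $\Spec A[X_1,\dots,X_n]/\mathfrak{p}_A \to \Spec A$ is geometrically integral. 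By Grothendieck's openness theorem for geometric integrality of fibers in families of finite presentation, there is a dense open $U \subseteq \Spec A$ such that the fiber over every closed point $\mu \in U$ is geometrically integral, i.e., the reduction $\mathfrak{p}_\mu \subseteq \overline{k(\mu)}[X_1,\dots,X_n]$ is prime.

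Finally, the restriction of the isomorphism $\C \cong \ulim_p \overline{\FF_p}$ to $A$ is realized, by finite generation of $A$, as a coherent family of $\Z$-algebra maps $A \to \overline{\FF_p}$ for almost all $p$, corresponding to closed points $\mu_p \in \Spec A$ with $k(\mu_p) \cong \overline{\FF_p}$. Since $A \hookrightarrow \C$ is injective, for any nonzero $j$ in a finite generating set of the defining ideal of $\Spec A \setminus U$, the image of $j$ in $\overline{\FF_p}$ via $\mu_p$ is nonzero for almost all $p$; combined across finitely many generators, this forces $\mu_p \in U$ for almost all $p$. Taking $(f_i)_p := f_i \bmod \mu_p$ as representatives then yields $\mathfrak{p}_p = \mathfrak{p}_{\mu_p}$, which is prime for almost all $p$ by the previous step, completing the reduction. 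The main obstacle, mild but necessary, is precisely this last bit of bookkeeping: verifying that Schoutens' isomorphism $\C \cong \ulim_p \overline{\FF_p}$ restricts on $A$ to a coherent family $(\mu_p)_p$ of closed points with residue field $\overline{\FF_p}$ and that the chosen representatives $(f_i)_p$ may be arranged to be reductions modulo $\mu_p$; this is routine from the construction of the non-standard hull, but is the only place where the specific ultrafilter machinery intervenes.
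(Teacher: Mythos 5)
The paper does not prove this proposition: it is quoted directly from van den Dries \cite{vdD79}, so there is no internal argument to compare yours against. Your route is the standard one, and its skeleton is sound: since $\mathfrak{p}$ has a fixed finite number of generators, the kernel of $\C[X_1,\dots,X_n]_\infty\to\ulim_p\overline{\FF_p}[X_1,\dots,X_n]/\mathfrak{p}_p$ is exactly $\mathfrak{p}\,\C[X_1,\dots,X_n]_\infty$, so by \L o\'{s} the extension is prime if and only if $\mathfrak{p}_p$ is prime for almost all $p$, and the latter is a spreading-out statement over a finitely generated $\Z$-subalgebra $A\subseteq\C$, using that the points $\mu_p\in\Spec A$ induced by $\C\cong\ulim_p\overline{\FF_p}$ avoid any fixed proper closed subset for almost all $p$.

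Two steps are stated inaccurately, though both are easily repaired. First, the claimed injection $A[X_1,\dots,X_n]/\mathfrak{p}_A\hookrightarrow\C[X_1,\dots,X_n]/\mathfrak{p}$ is not automatic for your initial $A$, and $\mathfrak{p}_A$ need not be prime: for instance $\mathfrak{p}=(2X_1)\C[X_1]=(X_1)\C[X_1]$ with $A=\Z$ gives $\mathfrak{p}_A=(2X_1)\subseteq\Z[X_1]$, which is not prime. You must either enlarge or localize $A$ so that $\mathfrak{p}\cap A[X_1,\dots,X_n]=\mathfrak{p}_A$, or, more simply, work only with the generic fibre: with $K=\mathrm{Frac}(A)$ the map $K[X_1,\dots,X_n]\to\C[X_1,\dots,X_n]$ is faithfully flat, so $K[X_1,\dots,X_n]/\mathfrak{p}_AK[X_1,\dots,X_n]$ and, after choosing an embedding $\overline{K}\hookrightarrow\C$, also $\overline{K}[X_1,\dots,X_n]/\mathfrak{p}_A\overline{K}[X_1,\dots,X_n]$ inject into $\C[X_1,\dots,X_n]/\mathfrak{p}$; this gives geometric integrality of the generic fibre, which is all the rest of your argument uses. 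Second, for a finite-type (non-proper, non-flat) family the locus of geometrically integral fibres is in general only constructible (EGA IV, 9.7.7), not open; but a constructible set containing the generic point of the integral scheme $\Spec A$ contains a dense open $U$, which you may shrink to a basic open $D(a)$, so that ``$\mu_p\in U$ for almost all $p$'' follows from the single condition $a_p\neq 0$ in $\overline{\FF_p}$. (Your phrase about combining finitely many generators has the quantifier backwards: you need \emph{some} generator of the ideal of $\Spec A\setminus U$ not to vanish at $\mu_p$, not all of them, and one nonzero generator $j$ suffices since $D(j)\subseteq U$.) With these repairs the argument is complete and is, in substance, the standard proof behind the citation.
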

\begin{defn}
	\begin{enumerate}
		\item For any element $f$ of $\C[X_1,\dots,X_n]$, a sequence $(f_p)$ of elements of $\overline{\mathbb{F}_p}[X_1 ,\dots,X_n]$ is said to be an {\it approximation} of $f$ if $f=\ulim_p f_p$ in $\C[X_1,\dots,X_n]_\infty$.
		\item For an ideal $I=(f_1,\dots,f_m)$ of $\C[X_1,\dots,X_n]$, a sequence $(I_p)$ of ideals $I_p=(f_{1,p},\dots, f_{m,p})$ is said to be an {\it approximation} of $I$ if $(f_{i,p})$ is an approximation of $f_i$ for any $i$. 
	\end{enumerate}
\end{defn}
	Based on these observations, Schoutens introduced the notion of approximations and non-standard hulls.
\begin{defn}
	Let $R$ be a local ring essentially of finite type over $\C$. Take $n\in \N$, an ideal $I\subseteq \C[X_1,\dots,X_n]$ and a prime ideal $\mathfrak{p}$ of $\C[X_1,\dots,X_n]$ containing $I$ such that $R\cong (\C[X_1,\dots,X_n]/I)_{\mathfrak{p}}$. The {\it non-standard hull} $R_\infty$ of $R$ is defined as
	\[
		R_\infty=(\C[X_1,\dots,X_n]_\infty/I\C[X_1,\dots,X_n]_\infty)_{\mathfrak{p}\C[X_1,\dots,X_n]_\infty}.
	\]
	The isomorphism class of non-standard hulls of $R$ is independent of the choice of presentation of $R$. If $(\mathfrak{p}_p)$ and $(I_p)$ are approximations of $\mathfrak{p}$ and $I$ respectively, then an {\it approximation} $(R_p)$ of $R$ is a family of local rings such that
	\[
		R_p=(\overline{\mathbb{F}_p}[X_1,\dots,X_n]/I_p)_{\mathfrak{p}_p}
	\]
	for almost all $p$. Note that $\mathfrak{p}_p$ is prime for almost all $p$ and two approximations are isomorphic for almost all $p$.
\end{defn}
\begin{defn} Let $R$ be a local ring essentially of finite type over $\C$.
	\begin{enumerate}
		\item Let $f$ be an element of $R$. A family of elements $(f_p)$ of $R_p$ is said to be an {\it approximation} of $f$ if $f=\ulim_p f_p$ in $R_\infty$.
		\item For an ideal $I=(f_1,\dots,f_m)$ of $R$, a sequence $(I_p)$ of ideals $I_p=(f_{1,p},\dots,f_{m,p})$ is said to be an {\it approximation} of $I$ if $(f_{i,p})$ is an approximation of $(f_i)$ for any $i$.
	\end{enumerate}
\end{defn}
In this setting, we can construct morphisms analogous to Frobenius.
\begin{defn}
	\begin{enumerate}
		\item For $\epsilon\in {^*\N}$, we define the {\it ultra-Frobenius} $F^\epsilon$ corresponding to $\epsilon$ as
		\[
		F^\epsilon:R_\infty \to R_\infty; x\to \ulim_p x_p^{p^{e_p}},
		\]
		where $(x_p)$ denotes an approximation of $x$.
		We also use $F^\epsilon$ to denote the composite morphism $R\hookrightarrow R_\infty\xrightarrow{F^\epsilon} R_\infty$.
		\item For an $R_\infty$-module $M$, we use $F^\epsilon_* M$ to denote the restriction of scalars along $F^\epsilon$ (cf. Definition \ref{Frobenius pushforward}).
	\end{enumerate}
\end{defn}
\begin{defn}
	Let $R$ be a local ring essentially of finite type over $\C$ and $M$ be a finitely generated $R$-module. Suppose that $M$ fits into an exact sequence
	\[
		R^b\xrightarrow{A} R^a \to M \to 0,
	\]
	where $a,b\ge 1$ and $A$ is an $a\times b$ matrix whose entries are in $R$. Let $(R_p)$ be an approximation of $R$. Then a family $(M_p)$ of $R_p$-modules is said to be an {\it approximation} of $M$ if $M_p$ is the cokernel of a morphism
	\[
		R_p^{b}\xrightarrow{A_p} R_p^a
	\]
	for almost all $p$, where $(A_p)$ is a family of matrices defined by an entrywise approximation.
	Approximations $(M_p)$ are independent of the choice of the matrix $A$ up to almost equality and isomorphisms.
\end{defn}
\begin{rem}
	If $(M_p)$ is an approximation of $M$, then we have an isomorphism
	\[
		M\otimes_R R_\infty \cong \ulim_p M_p
	\]
	by construction.
\end{rem}
We can also define approximations of Weil divisors on $R$ using approximations of ideals or approximations of modules. Moreover, we can generalize the notion of divisors as follows.
\begin{defn}
	\begin{enumerate}
		\item We say that $\Delta$ is {\it Weil ultra-divisor} if $\Delta$ is a formal sum of prime divisors with ${^*}\Z$-coefficients. Similarly, $\Delta$ is called {\it ${^*}\Q$-Weil ultra-divisor} if $\Delta$ is a formal sum of prime divisors with ${^*}\Q$-coefficients.
		\item Let $D_i$ be distinct prime divisors, $a_i=\ulim_p a_{i,p}\in {^*}\Q$ and $\Delta=\sum_{i} a_i D_i$. Let $\lfloor \Delta \rfloor$ denote $\sum_i \lfloor a_i\rfloor D_i$, where $\lfloor a_i\rfloor$ is defined as $\ulim_p \lfloor a_{i,p} \rfloor$. $\lceil \Delta \rceil$ is defined similarly.
		\item Let $D_i$ be prime divisors, $a_i=\ulim_p a_{i,p}\in {^*}\Z$ and $\Delta=\sum_{i} a_i D_i$. Then $(\Delta_p)$ is an approximation of $\Delta$ if $(D_{i,p})$ is an approximation of $D_{i}$ for any $i$, $(a_{i,p})$ is an approximation of $a_i$ for any $i$, and $\Delta_p=\sum_i a_{i,p}\Delta_p$ for almost all $p$.
		\item For a Weil ultra-divisor $\Delta$, let $R_\infty(\Delta)$ denotes $\ulim_p R_p(\Delta_p)$, where $(\Delta_p)$ is an approximation of $\Delta$.
	\end{enumerate}
\end{defn}
\begin{obs}
	Let $R$ be a local ring essentially of finite type over $\C$. Here we consider local cohomologies of approximations (see \cite[Section 5]{Scho08}).
	Let $x_1,\dots,x_d$ be a system of parameters for $R$. Suppose that $M_p$ is an $R_p$-module for almost all $p$ and $M_\infty=\ulim_p M_p$. For an integer $n \ge 0$ and an $n$-tuple $1\le i_1<\dots<i_n\le d$, there exists a natural map
	\[
	(M_\infty)_{x_{i_1}\cdots x_{i_d}}\to \ulim_p (M_p)_{x_{i_1,p}\dots x_{i_d,p}}.
	\]
	Considering the \v{C}ech complexes of $M_p$ and $M_{\infty}$, we have commutative diagrams
	\[
	\xymatrix{
		\bigoplus_{1\le i_1<\cdots<i_n \le d}(M_\infty)_{x_{i_1}\dots x_{i_n}} \ar[r] \ar[d]& \bigoplus_{1\le j_1<\cdots<j_{n+1}\le d} (M_\infty)_{x_{j_1}\dots x_{j_{n+1}}} \ar[d]\\
		\bigoplus_{1\le i_1<\cdots<i_n \le d}\ulim_p (M_p)_{x_{i_1,p}\dots x_{i_n,p}} \ar[r] & \bigoplus_{1\le j_1<\cdots<j_{n+1}\le d}\ulim_p (M_p)_{x_{j_1,p}\dots x_{j_{n+1},p}}, 
	}
	\]
	which yield natural maps 
	\[
	H_{\m}^i(M_\infty)\to \ulim_p H_{\m_p}^i(M_p).
	\]
\end{obs}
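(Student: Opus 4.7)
The plan is to construct the maps in three short steps, as outlined in the statement. First, I would construct, for each product $f = x_{i_1}\cdots x_{i_n}$ with approximation $f_p = x_{i_1,p}\cdots x_{i_n,p}$, a canonical $R$-linear map
\[
(M_\infty)_f \longrightarrow \ulim_p (M_p)_{f_p}
\]
sending $m/f^k$, for $m = \ulim_p m_p$, to $\ulim_p(m_p/f_p^k)$. Well-definedness would follow from \L o\'s's theorem: if $f^N(f^{k_2}m_1 - f^{k_1}m_2) = 0$ in $M_\infty$, then the corresponding equation holds in $M_p$ for almost all $p$, and hence the two fractions agree in $(M_p)_{f_p}$ for almost all $p$.

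Second, I would verify that these localization maps commute with the \v{C}ech differentials, which on both sides are alternating sums of canonical inclusions of the form $(M_\infty)_f \to (M_\infty)_{fg}$ and $\ulim_p (M_p)_{f_p} \to \ulim_p (M_p)_{f_p g_p}$. Commutativity of the squares in the statement then reduces to functoriality of the first step in the denominator $f$. This produces a morphism from the \v{C}ech complex computing $H^\bullet_\m(M_\infty)$ into $\ulim_p$ of the \v{C}ech complex computing $H^\bullet_{\m_p}(M_p)$. Third, passing to cohomology, and using that ultraproducts are exact on modules (a direct consequence of \L o\'s's theorem) and therefore commute with taking cohomology on the right-hand side, one extracts the desired natural maps $H^i_\m(M_\infty) \to \ulim_p H^i_{\m_p}(M_p)$.

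The only delicate point, and essentially the main thing worth pausing on, is the well-definedness of the map in the first step: $(M_\infty)_f$ and $\ulim_p (M_p)_{f_p}$ are genuinely distinct in general, since the target can contain classes whose denominator exponents depend on $p$ and therefore admit no single representative over $R_\infty$. Consequently, the construction yields a map only in one direction, and typically not an isomorphism; once this asymmetry is acknowledged, the remaining verifications in the second and third steps are purely formal.
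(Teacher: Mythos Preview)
Your proposal is correct and follows exactly the route the paper indicates: the observation in the paper is stated without a separate proof, and your three steps (construct the localization maps via \L o\'{s}'s theorem, check compatibility with the \v{C}ech differentials, then pass to cohomology using exactness of ultraproducts) are precisely the details underlying that observation. Your remark that the map is only one-directional because denominator exponents in the ultraproduct can vary with $p$ is also the right explanation for why these are maps rather than isomorphisms.
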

	We can extend the above definitions to a relative setting.
	Let $R$ be a local ring essentially of finite type over $\C$.
	For an approximation $(R_p)$ of $R$ and a positive integer $n$, we have a faithfully flat map
	\[
		R[X_1,\dots,X_n] \to \ulim_p \left(R_p[X_1,\dots,X_n]\right).
	\]
	We can define an approximation of elements of $R[X_1,\dots,X_n]$ and ideals of $R[X_1,\dots, X_n]$.
	Let $S$ be a finitely generated $R$-algebra. Suppose that $S\cong R[X_1,\dots,X_n]/I$.
	A {\it relative approximation} $S_p$ of $S$ is defined to be $R_p[X_1,\dots,X_n]/I_p$ and the {\it relative hull} $S_\infty$ of $S$ is defined to be $\ulim_p S_p$.
	If $S$ is an $\N$-graded algebra, $S_p$ is $\N$-graded for almost all $p$. For an element $f\in S_\infty$ and $\nu \in {^*\N}$, we define the {\it degree $\nu$ part} $f_\nu$ of $f$ as
	\[
		f_\nu:=\ulim_p (f_p)_{n_p},
	\]
	where $f=\ulim_p f_p$, $\nu=\ulim_p n_p$ and $(f_p)_{n_p}$ is the degree $n_p$ part of $f_p$.
\section{Main theorem}
\begin{setting} \label{setting of rings with finitely generated anti-canonical algebras}
	Let $(R,\m)$ be a local normal domain essentially of finite type over $\C$ of dimension $d$, $D$ be a prime divisor or $D=0$,  $f$ be an element of $R^{\circ,D}$ and $t$ be a positive rational number. Fix a canonical divisor $K_R$ of $\Spec R$ such that $K_R+D$ is effective and has no component equal to $D$. Suppose that $\mathscr{R}=\bigoplus_{i\ge 0}R(-i(K_R+D))T^i\subseteq R[T]$ is a finitely generated $R$-algebra. Let $\mathscr{M}=\m\mathscr{R}+\mathscr{R}_{+}$ be the unique homogeneous maximal ideal of $\mathscr{R}$. 
\end{setting}
\begin{defn} \label{definition of divisorial ultra-test ideals}
	With notation as in Setting \ref{setting of rings with finitely generated anti-canonical algebras}, let $M$ be an $R$-module. $0_{M}^{*\mathrm{u}_DD,f^t}$ is defined as follows: For $\eta\in M$, $\eta\in 0_{M}^{*\mathrm{u}_DD,f^t}$ if and only if there exists $c\in R^{\circ,D}$ such that for any $\epsilon \in {^*\N}$, $\eta\otimes F_*^{\epsilon}(cf^{\lceil t\pi^\epsilon \rceil})=0$ in $M\otimes_R F^\epsilon_*R_{\infty}((\pi^\epsilon-1)D)$. The {\it divisorial ultra-test ideal} $\tau_D^{\mathrm{u}}(R,D,f^t)$ of the triple $(R,D,f^t)$ is defined to be $\Ann_R 0_{E}^{*\mathrm{u}_DD,f^t}$, where $E$ is the injective hull of the residue  field of $R$.
\end{defn}
\begin{rem}
	If $D=0$, the divisorial ultra-test ideal $\tau_{D}^{\mathrm{u}}(R,D,f^t)$ of a triple $(R,D,f^t)$ coincides with the ultra-test ideal $\tau_{\mathrm{u}}(R,f^t)$ of the pair $(R,f^t)$ defined in \cite{Yam23a}.
\end{rem}
\begin{prop}\label{tau over all modules}
	With notation as in Setting \ref{setting of rings with finitely generated anti-canonical algebras}, we have
	\[
		\tau_{D}^{\mathrm{u}}(R,D,f^t)=\bigcap_{M} \Ann_{R} 0_{M}^{*\mathrm{u}_DD,f^t},
	\]
	where $M$ runs through all $R$-modules.
\end{prop}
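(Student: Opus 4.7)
The plan is to follow the template of the classical argument in \cite[Proposition 8.23]{HH90}, whose positive-characteristic analogue is recorded above as Proposition \ref{divisorial test ideal characterization}. The inclusion $\bigcap_M \Ann_R 0_M^{*\mathrm{u}_DD,f^t} \subseteq \tau_D^{\mathrm{u}}(R,D,f^t)$ is immediate, since $E$ is itself one of the modules $M$ over which the intersection is taken. The content of the proposition is the reverse inclusion.

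The key point is a general functoriality property of the closure operation $0_M^{*\mathrm{u}_DD,f^t}$: for any $R$-linear map $\phi:M\to N$ and any $\eta\in 0_M^{*\mathrm{u}_DD,f^t}$, one has $\phi(\eta)\in 0_N^{*\mathrm{u}_DD,f^t}$. First I would record this as a preliminary observation. It is immediate from the definition: if $c\in R^{\circ,D}$ witnesses that $\eta$ lies in $0_M^{*\mathrm{u}_DD,f^t}$, so that $\eta\otimes F^\epsilon_*(cf^{\lceil t\pi^\epsilon\rceil})=0$ in $M\otimes_R F^\epsilon_*R_\infty((\pi^\epsilon-1)D)$ for every $\epsilon\in {^*}\N$, then applying the functor $(-)\otimes_R F^\epsilon_*R_\infty((\pi^\epsilon-1)D)$ to $\phi$ gives $\phi(\eta)\otimes F^\epsilon_*(cf^{\lceil t\pi^\epsilon\rceil})=0$ in $N\otimes_R F^\epsilon_*R_\infty((\pi^\epsilon-1)D)$, so the same $c$ works for $\phi(\eta)$.

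Given this, I would take $x\in \tau_D^{\mathrm{u}}(R,D,f^t)=\Ann_R 0_E^{*\mathrm{u}_DD,f^t}$, any $R$-module $M$, and any $\eta\in 0_M^{*\mathrm{u}_DD,f^t}$, and argue by contradiction that $x\eta=0$. Suppose $x\eta\neq 0$. Since $R$ is Noetherian local, the cyclic module $R(x\eta)\cong R/\Ann_R(x\eta)$ admits the simple module $R/\m$ as a quotient (apply Nakayama to $R(x\eta)$), giving a nonzero map $R(x\eta)\to R/\m\hookrightarrow E$. By injectivity of $E$, this extends to an $R$-linear map $\phi:M\to E$ with $\phi(x\eta)\neq 0$. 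Functoriality of the closure gives $\phi(\eta)\in 0_E^{*\mathrm{u}_DD,f^t}$, and since $x$ annihilates $0_E^{*\mathrm{u}_DD,f^t}$, we obtain $x\phi(\eta)=\phi(x\eta)=0$, a contradiction.

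There is no real obstacle in this argument beyond checking the functoriality statement; it is essentially a formal transcription of the Hochster--Huneke argument, with the role of the Frobenius map played by the ultra-Frobenius and the corresponding tensor products over $R$ replacing the Frobenius pushforwards. Notably one does not need to restrict to finitely generated $M$, because the injectivity of $E$ provides the required separating map $M\to E$ for arbitrary $R$-modules.
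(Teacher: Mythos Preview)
Your proposal is correct and follows precisely the approach the paper indicates: the paper's proof simply refers to the argument of \cite[Proposition 8.23]{HH90} (via \cite[Proposition 5.11]{Yam23a}), and what you have written is exactly that argument transcribed to the ultra-Frobenius setting. The functoriality observation and the use of the injectivity of $E$ are the two ingredients of the Hochster--Huneke proof, and both carry over without modification.
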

\begin{proof}
	This follows from an argument similar to \cite[Proposition 5.11]{Yam23a}, which is essentially the same proof as \cite[Proposition 8.23]{HH90}.
\end{proof}
\begin{prop} \label{adj subset tau}
	With notation as in Setting \ref{setting of rings with finitely generated anti-canonical algebras}, we have
	\[
		\adj_D(R,D,f^t)\subseteq \tau^{\mathrm{u}}(R,D,f^t).
	\]
\end{prop}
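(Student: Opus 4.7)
The plan is to show that any $z\in\adj_D(R,D,f^t)$ lies in the annihilator of $0_E^{*\mathrm{u}_DD,f^t}$; by the definition of $\tau_D^{\mathrm{u}}$, this is precisely the containment we want. So I fix $z\in\adj_D(R,D,f^t)$ and $\eta\in 0_E^{*\mathrm{u}_DD,f^t}$, and let $c\in R^{\circ,D}$ be a witness for $\eta$.

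First I would spread the data $(R,D,f,c,z,\eta)$ to a model over a finitely generated $\Z$-subalgebra $A\subset\C$. After enlarging $A$, we may assume $\mathscr{R}_A$ is a finitely generated $R_A$-algebra, so that the proposition comparing adjoint ideals with divisorial test ideals under reduction modulo $p$ (stated just before Setting \ref{setting of rings with finitely generated anti-canonical algebras}) applies. For every general closed point $\mu\in\Spec A$, this gives
\[
z_\mu\in\adj_D(R,D,f^t)_\mu=\tau_{D_\mu}(R_\mu,D_\mu,f_\mu^t),
\]
whence by Proposition~\ref{divisorial test ideal characterization} the element $z_\mu$ annihilates $0_{E_\mu}^{*_{D_\mu}D_\mu,f_\mu^t}$.

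The key step, and the one I expect to be the main obstacle, is transferring the ultra-Frobenius vanishing for $\eta$ into an ordinary Frobenius vanishing for $\eta_\mu$. Concretely, I would prove that, for almost all $p$, $\eta_p\in 0_{E_p}^{*_{D_p}D_p,f_p^t}$ with witness $c_p$. If this failed, the set of $p$ admitting an integer $e_p\ge 0$ with
\[
\eta_p\otimes F^{e_p}_*(c_pf_p^{\lceil tp^{e_p}\rceil})\ne 0\quad\text{in}\quad E_p\otimes_{R_p}F^{e_p}_*R_p((p^{e_p}-1)D_p)
\]
would belong to the ultrafilter. Setting $\epsilon:=\ulim_p e_p\in{^*}\N$ on such $p$, and invoking the natural identification
\[
E\otimes_R F^\epsilon_* R_\infty((\pi^\epsilon-1)D)\cong\ulim_p\bigl(E_p\otimes_{R_p}F^{e_p}_*R_p((p^{e_p}-1)D_p)\bigr)
\]
together with \L o\'s's theorem, I would obtain $\eta\otimes F^\epsilon_*(cf^{\lceil t\pi^\epsilon\rceil})\ne 0$, contradicting $\eta\in 0_E^{*\mathrm{u}_DD,f^t}$. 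The delicate technical point here is making sense of the identification above when $E$ is not finitely presented, which I would handle as in \cite[Section 5]{Yam23a} via the \v{C}ech-complex description of local cohomology recalled in the Observation preceding Setting \ref{setting of rings with finitely generated anti-canonical algebras}.

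Combining the previous two steps, $z_\mu\eta_\mu=0$ in $E_\mu$ for almost all $\mu$, so passing to the ultraproduct yields $z\eta=0$ inside $\ulim_p E_p\cong E\otimes_R R_\infty$. Faithful flatness of $R\to R_\infty$ forces the natural map $E\hookrightarrow E\otimes_R R_\infty$ to be injective, so $z\eta=0$ already in $E$. Since $\eta$ was arbitrary in $0_E^{*\mathrm{u}_DD,f^t}$, this shows $z\in\Ann_R 0_E^{*\mathrm{u}_DD,f^t}=\tau_D^{\mathrm{u}}(R,D,f^t)$, completing the proof.
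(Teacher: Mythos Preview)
Your proposal is correct and follows essentially the same route as the paper: pass to approximations so that $z_p\in\tau_{D_p}(R_p,D_p,f_p^t)$ for almost all $p$, argue by contradiction via $\epsilon=\ulim_p e_p$ that $\eta_p$ lies in the divisorial tight closure for almost all $p$, and then conclude $z\eta=0$ from injectivity into the ultraproduct. The only cosmetic differences are that the paper works with $H_\m^d(\omega_R)$ (which is $E$), in the contradiction step uses merely the natural \emph{map} to $\ulim_p\bigl(H_{\m_p}^d(\omega_{R_p})\otimes_{R_p}F^{e_p}_*R_p((p^{e_p}-1)D_p)\bigr)$ rather than an identification (which is all one needs, and sidesteps the finite-presentation issue you flag), and for the final injectivity cites \cite{Yam24} directly instead of routing through $E\otimes_R R_\infty$ and faithful flatness.
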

\begin{proof}
	Let $\eta \in 0_{H_\m^d(\omega_R)}^{*\mathrm{u}_DD,f^t}$ and $a\in \adj_D(R,D,f^t)$. By \cite[Theorem 7.3]{TY24} and an argument similar to \cite[Proposition 5.5]{Yam23}, we have $(\adj_D(R,D,f^t))_p=\tau_{D_p}(R_p,D_p,f_p^t)$ for almost all $p$. Hence, $a_p\in \tau_{D_p}(R_p,D_p,f_p^t)$ for almost all $p$. Suppose that $\eta_p \notin 0_{H_{\m_p}^d(\omega_{R_p})}^{*_{D_p}D_p,f_p^t}$ for almost all $p$. Take $c\in R^{\circ,D}$ such that $\eta\otimes F^\epsilon_*(cf^{\lceil t\pi^\epsilon \rceil})=0$ in $H_\m^d(\omega_R)\otimes_R F^\epsilon_*R_{\infty}((\pi^\epsilon-1)D)$ for any $\epsilon\in {^*}\N$. For almost all $p$, there exists $e_p\in \N$ such that $\eta_p \otimes F^{e_p}_*(c_pf_p^{\lceil tp^{e_p}\rceil})\neq 0$ in $H_{\m_p}^d(\omega_{R_p})\otimes_{R_p}F^{e_p}_*R_p((p^{e_p}-1)D_p)$. Let $\epsilon:=\ulim_p e_p$. Since we have a natural morphism 
	\[
	H_\m^d(\omega_R)\otimes_R F^\epsilon_*R_{\infty}((\pi^\epsilon-1)D) \to \ulim_p(H_{\m_p}^d(\omega_{R_p})\otimes_{R_p}F^{e_p}_*R_p((p^{e_p}-1)D_p)),
	\] and $\eta\otimes F^\epsilon_*(cf^{\lceil t\pi^\epsilon \rceil})=0$ is mapped to $\ulim_p (\eta_p\otimes F^{e_p}_*(c_pf_p^{\lceil tp^{e_p} \rceil}))\neq 0$, which is a contradiction. Hence, $\eta_p\in 0_{H_{\m_p}^d(\omega_{R_p})}^{*_{D_p}D_p,f_p^t}$ for almost all $p$. Therefore, $a_p\eta_p=0$ for almost all $p$. Since $H_\m^d(\omega_R)\to \ulim_p H_{\m_p}^d(\omega_{R_p})$ is injective (see \cite[Proposition 3.3, Proposition 3.5, Proof of Proposition 3.9]{Yam24}), we have $a\eta=0$.
\end{proof}
\begin{lem}
		With notation as in Setting \ref{setting of rings with finitely generated anti-canonical algebras}, let $\mathscr{D}$ be the Weil divisor on $\Spec \mathscr{R}$ defined as $\mathscr{R}(-\mathscr{D})=(R(-D)\cdot\mathscr{R})^{**}$, where $(-)^{**}$ is the reflexive hull as an $\mathscr{R}$-module. Then $\mathscr{D}$ is prime and $\mathscr{R}(-\mathscr{D})=\bigoplus_{i\ge 0}R(-D-i(K_R+D))T^{i}$.
\end{lem}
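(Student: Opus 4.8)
The plan is to realise $\Spec\mathscr{R}$, outside a closed subset of codimension at least two, as the total space of a line bundle over a big open subscheme of $X:=\Spec R$, which reduces the statement to a trivial computation. Since $D$ is a reduced prime divisor, $R(-D)$ is exactly the height-one prime ideal of $R$ cutting out $D$, so the ideal whose reflexive hull defines $\mathscr{D}$ is $R(-D)\mathscr{R}$. Let $U\subseteq X$ be the largest open over which both $D$ and $K_X+D$ are Cartier; by normality of $R$, $\codim_X(X\setminus U)\ge 2$. Set $\mathscr{L}:=\mathcal{O}_X(-(K_X+D))|_U$, a line bundle on $U$, and let $\rho\colon V:=\Spec_U\bigl(\Sym_{\mathcal{O}_U}\mathscr{L}\bigr)\to U$ be the associated $\mathbb{A}^1$-bundle; then $V$ is normal (being smooth over the normal scheme $U$) and quasi-affine (being affine over $U$, which is open in the affine scheme $X$).

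The first step is to show that $V$ is canonically a big open subscheme of $\Spec\mathscr{R}$. Using $\rho_*\mathcal{O}_V=\bigoplus_{i\ge 0}\mathscr{L}^{\otimes i}$ and the fact that sections of the reflexive sheaves $\mathcal{O}_X(-i(K_X+D))$ extend across the codimension-two set $X\setminus U$, one computes
\[
\Gamma(V,\mathcal{O}_V)=\bigoplus_{i\ge 0}\Gamma\bigl(U,\mathscr{L}^{\otimes i}\bigr)=\bigoplus_{i\ge 0}R(-i(K_R+D))=\mathscr{R};
\]
in particular $\mathscr{R}$, being the ring of global sections of the normal integral scheme $V$, is a normal domain, and the canonical morphism $j\colon V\to\Spec\Gamma(V,\mathcal{O}_V)=\Spec\mathscr{R}$ is an open immersion with image $\phi^{-1}(U)$, where $\phi\colon\Spec\mathscr{R}\to X$ is the structure morphism. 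Finally, $j$ is a quasi-compact open immersion, so $j_*\mathcal{O}_V$ is quasi-coherent with global sections $\mathscr{R}$, hence $j_*\mathcal{O}_V=\mathcal{O}_{\Spec\mathscr{R}}$; if $\Spec\mathscr{R}\setminus V$ had a codimension-one component with generic point $\eta$, then — $V$ containing the generic point of $\Spec\mathscr{R}$ but not $\eta$ — the stalk $(j_*\mathcal{O}_V)_\eta$ would be the fraction field of $\mathscr{R}$ rather than the discrete valuation ring $\mathcal{O}_{\Spec\mathscr{R},\eta}$, a contradiction. Thus $\codim_{\Spec\mathscr{R}}(\Spec\mathscr{R}\setminus V)\ge 2$.

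It remains to compute over $V$. There $D|_U$ is a Cartier prime divisor, so $R(-D)\mathscr{R}$ restricts to the invertible ideal sheaf $\mathcal{O}_V(-\rho^*(D|_U))$, and $\rho^*(D|_U)$ is a prime divisor on $V$ (the integral, reduced preimage of the prime $D|_U$ under the smooth morphism $\rho$). Since $\mathscr{R}$ is normal and $\Spec\mathscr{R}\setminus V$ has codimension at least two, $(R(-D)\mathscr{R})^{**}$ is the unique reflexive extension of $\mathcal{O}_V(-\rho^*(D|_U))$, so $\mathscr{D}|_V=\rho^*(D|_U)$ and therefore $\mathscr{D}=\overline{\rho^*(D|_U)}$ is prime. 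Pushing forward along $\rho$ by the projection formula,
\[
\mathscr{R}(-\mathscr{D})=\Gamma\bigl(V,\mathcal{O}_V(-\rho^*(D|_U))\bigr)=\bigoplus_{i\ge 0}\Gamma\bigl(U,\mathscr{L}^{\otimes i}\otimes\mathcal{O}_U(-D|_U)\bigr)=\bigoplus_{i\ge 0}R(-D-i(K_R+D))T^i,
\]
as claimed. The step I expect to be the crux is the identification of $V$ with a big open subscheme of $\Spec\mathscr{R}$ whose complement has codimension two — equivalently, that passing to the (Noetherian, by finite generation) section ring of $-(K_R+D)$ introduces no new prime divisor over the locus where $D$ or $K_R+D$ fails to be Cartier. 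Once this is in hand, the remaining steps are formal.
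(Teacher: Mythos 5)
Your proof is correct in substance but follows a genuinely different route from the paper. The paper handles both claims by citing structural results on the generalized Rees algebra from Goto--Herrmann--Nishida--Villamayor: their Lemma 4.3(4) gives that $R(-D)$ is contained in a unique height-one prime of $\mathscr{R}$ (so $\mathscr{D}$ is prime), and their Lemma 4.3(2) gives that the preimage of the regular locus of $\Spec R$ has complement of codimension $\ge 2$ in $\Spec \mathscr{R}$; the identity $\mathscr{R}(-\mathscr{D})=\bigoplus_{i\ge 0}R(-D-i(K_R+D))T^i$ then follows by comparing two divisorial ideals that agree on that big open set. You instead make the geometry explicit: over the big open $U$ where $D$ and $K_R+D$ are Cartier, $\Spec\mathscr{R}$ is the total space $V$ of the line bundle $\sO_X(-(K_X+D))|_U$, and you deduce the crucial codimension-two statement directly from $\Gamma(V,\sO_V)=\mathscr{R}$ together with normality, rather than by citation; the rest is the same reflexive-extension-from-a-big-open argument. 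This buys a self-contained proof at the cost of a longer setup. Two small points to tighten: (i) the assertion that the stalk $(j_*\sO_V)_\eta$ at a codimension-one point $\eta\notin V$ ``would be the fraction field'' is an overclaim and is not what you need; what is true and suffices is that this stalk strictly contains the discrete valuation ring $\mathscr{R}_\eta$ --- e.g.\ pick $0\neq t\in\eta$, shrink to an open $W\ni\eta$ missing the components of $V(t)$ other than $\overline{\{\eta\}}$, and use that $\overline{\{\eta\}}\cap V=\emptyset$ (as $V$ is open and omits $\eta$) to see $1/t\in\Gamma(W\cap V,\sO_V)$; equivalently, invoke the irredundance of the essential valuations of the Krull domain $\mathscr{R}$ to conclude from $\Gamma(V,\sO_V)=\mathscr{R}$ that every height-one prime lies in $V$; (ii) you should dispose of the allowed case $D=0$ (trivial, as in the paper) before assuming $D$ is a prime divisor.
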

\begin{proof}
	If $D=0$, this is clear. Suppose that $D$ is prime. By \cite[Lemma 4.3 (4)]{GHNV90}, there exists a unique height one prime of $\mathscr{R}$ containing $R(-D)$. Hence, $\mathscr{D}$ is prime. Let $\mathfrak{p}=\bigoplus_{i\ge 0}R(-D-i(K_R+D))T^{i}$. Since $\mathfrak{p}=(R(-D)\cdot R[T])\cap \mathscr{R}$, $\mathfrak{p}$ is prime. Since $\mathscr{R}(-\mathscr{D})$ is a unique minimal prime of $\mathscr{R}$ containing $R(-D)$, we get $\mathscr{R}(-\mathscr{D})\subseteq \mathfrak{p}$. It is enough to show that $\mathscr{R}(-\mathscr{D})$ and $\mathfrak{p}$ is isomorphic in codimension one. Let $U$ be the regular locus of $\Spec R$ and $\pi:\Spec \mathscr{R}\to \Spec R$. Then the complement of $\pi^{-1}U$ is of codimension $\ge 2$. Indeed, if $P\in \Spec \mathscr{R}\setminus \pi^{-1}U$, then $\pi(P)\in \Spec R\setminus U$ is corresponding to a prime ideal of height $\ge 2$, but this contradicts \cite[Lemma 4.3 (2)]{GHNV90}. We have 
	\[
		\mathscr{R}(-\mathscr{D})|_{\pi^{-1}U}=(R(-D)\mathscr{R})|_{\pi^{-1}U}=\mathfrak{p}|_{\pi^{-1}U}.
	\]
\end{proof}
\begin{obs}
 By \cite[Theorem 4.5]{GHNV90}, $K_{\mathscr{R}}+\mathscr{D}$ is linearly equivalent to zero. Hence, up to some degree shift, we have
	\begin{align*}
		H_{\mathscr{M}}^{d+1}(\omega_{\mathscr{R}}) &\cong H_{\mathscr{M}}^{d+1}(\mathscr{R})\otimes_R R(-D)\\
		&\cong (\bigoplus_{i>0}H_\m^{d}(R(i(K_R+D)))T^{-i})\otimes_R R(-D) \\
		& \cong \bigoplus_{i>0}H_\m^{d}(R(-D+i(K_R+D)))T^{-i},
	\end{align*}
	where the second isomorphism follows from \cite[Theorem 2.2]{Wat94}, and the third isomorphism follows from the fact that $R(i(K_R+D))\otimes_{R}R(-D)$ and $R(-D+i(K_R+D))$ are isomorphic in codimension one. We regard $\bigoplus_{i\ge 0}R(i(K_R+D))T^{-i}$ as an $\mathscr{R}$-module, where any component of positive degree is considered to be zero. Then we have an $\mathscr{R}$-module isomorphism
	\[
		H_\m^{d}(\omega_R)\otimes_R \left(\bigoplus_{i\ge 0}R(i(K_R+D))T^{-i}\right)\xrightarrow[\cong]{\cdot T^{-1}} \bigoplus_{i>0}H_\m^{d}(R(-D+i(K_R+D)))T^{-i}.
	\]
\end{obs}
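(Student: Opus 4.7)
The plan is to assemble the displayed chain of isomorphisms from three essentially independent ingredients: a description of $\omega_{\mathscr{R}}$ as a twist of $\mathscr{R}$ coming from \cite[Theorem 4.5]{GHNV90}, Watanabe's calculation of the top local cohomology of anti-canonical section rings, and a codimension comparison between reflexive hulls and ordinary tensor products of the relevant divisorial sheaves. Throughout, I would write $\pi:\Spec\mathscr{R}\to\Spec R$ for the structure morphism and systematically use that the complement of the preimage of the regular locus $U\subseteq \Spec R$ has codimension at least two in $\Spec\mathscr{R}$, as established in the preceding Lemma.

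First I would invoke \cite[Theorem 4.5]{GHNV90} to obtain $K_{\mathscr{R}}+\mathscr{D}\sim 0$; combined with the preceding Lemma this identifies $\omega_{\mathscr{R}}$ with $\mathscr{R}(-\mathscr{D})=\bigoplus_{i\ge 0}R(-D-i(K_R+D))T^i$ up to a fixed degree shift. On each graded piece this module is the reflexive hull of $R(-i(K_R+D))\otimes_R R(-D)$, and the two agree on $U$; hence $\mathscr{R}(-\mathscr{D})$ and $\mathscr{R}\otimes_R R(-D)$ differ only on the codimension at least two locus $\Spec\mathscr{R}\setminus\pi^{-1}(U)$. The kernel and cokernel of the natural map are supported there and have dimension at most $d-1$, so the induced map on $H_{\mathscr{M}}^{d+1}$ is an isomorphism. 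Applying the right exactness of $H_{\mathscr{M}}^{d+1}$ to a free presentation of $R(-D)$ then yields $H_{\mathscr{M}}^{d+1}(\mathscr{R}\otimes_R R(-D))\cong H_{\mathscr{M}}^{d+1}(\mathscr{R})\otimes_R R(-D)$, producing the first isomorphism.

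The second isomorphism is a direct application of \cite[Theorem 2.2]{Wat94}, which computes $H_{\mathscr{M}}^{d+1}(\mathscr{R})\cong\bigoplus_{i>0}H_\m^d(R(i(K_R+D)))T^{-i}$, transported through the first step. The third isomorphism follows the same two-step pattern: on each graded piece $R(i(K_R+D))\otimes_R R(-D)$ and $R(-D+i(K_R+D))$ agree on $U$ and their kernel and cokernel are supported in codimension at least two in $\Spec R$, so their $H_\m^d$'s are identified; then $H_\m^d$ commutes with tensoring by $R(-D)$ by right exactness applied to a free presentation.

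For the final, explicit $\mathscr{R}$-module isomorphism I would define the map by multiplication by $T^{-1}$: the degree $-i$ piece on the left, $H_\m^d(\omega_R)\otimes_R R(i(K_R+D))$, is identified with the degree $-(i+1)$ piece on the right, $H_\m^d(R(-D+(i+1)(K_R+D)))$, using $K_R+i(K_R+D)=(i+1)K_R+iD=-D+(i+1)(K_R+D)$ together with the codimension comparison just established. $\mathscr{R}$-linearity reduces to compatibility with the $T$-action inherited from the embedding $\mathscr{R}\subseteq R[T]$, which is immediate. The main obstacle I anticipate is notational rather than conceptual: keeping track of the various degree shifts arising in \cite[Theorem 4.5]{GHNV90} and \cite[Theorem 2.2]{Wat94}, and verifying that all the codimension comparisons are uniformly compatible with the grading on $\mathscr{R}$. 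All the genuinely new geometric input for this proof has already appeared in the preceding Lemma.
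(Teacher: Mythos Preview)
Your proposal is correct and follows exactly the approach sketched in the paper's observation itself: the paper merely cites \cite[Theorem 4.5]{GHNV90} and \cite[Theorem 2.2]{Wat94} and invokes the codimension-one comparison, while you have supplied the intermediate details (the codimension-two support argument for passing between $\mathscr{R}(-\mathscr{D})$ and $\mathscr{R}\otimes_R R(-D)$, and the right-exactness argument for extracting the tensor from top local cohomology) that the paper leaves implicit. There is no separate proof in the paper beyond what is written in the observation, and your elaboration is faithful to it.
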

\begin{prop}
	With notation as above, for any $\epsilon\in {^*\Z}$ and any $\alpha\in {^*\Z}$, we have an $R$-linear homomorphism
	\[
		\psi_{\epsilon,\alpha}:\left(\bigoplus_{i\ge 0}R(i(K_R+D))T^{-i}\right)\otimes_{\mathscr{R}}F^{\epsilon}_* (\mathscr{R}_{\mathscr{M}})_\infty((\pi^\epsilon-1)\mathscr{D}) \to F^\epsilon_*R_\infty((\pi^\epsilon-1)D)
	\]
	which satisfies the following:
	\[
	aT^{-i}\otimes F^{\epsilon}_{*}\xi \mapsto \left\{ 
		\begin{array}{ll}
			0 & (\alpha\ge \pi^\epsilon) \\
			\ulim_p F^{e_p}_* (a_p^{p^{e_p}}[\xi_p]_{\alpha_p+p^{e_p}i}) & (\text{otherwise})
		\end{array}
		\right.,
	\]
	where $a=\ulim_p a_p$, $\alpha=\ulim_p \alpha_p$ and $[\xi_p]_{\alpha_p+p^{e_p}i}$ denotes the $(\alpha_p+p^{e_p}i)$-th homogeneous part of $\xi_p$. 
\end{prop}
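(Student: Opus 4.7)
The plan is to construct $\psi_{\epsilon,\alpha}$ by first defining analogous maps in each positive characteristic approximation and then passing to the ultraproduct. Writing $\epsilon = \ulim_p e_p$ and $\alpha = \ulim_p \alpha_p$, I would for almost all $p$ construct the $R_p$-bilinear pairing
\[
\phi_p\colon \left(\bigoplus_{i \ge 0} R_p(i(K_{R_p}+D_p))T^{-i}\right) \times F_*^{e_p}(\mathscr{R}_{p,\mathscr{M}_p})((p^{e_p}-1)\mathscr{D}_p) \to F_*^{e_p} R_p((p^{e_p}-1)D_p)
\]
sending $(aT^{-i},F_*^{e_p}\xi)$ to $F_*^{e_p}\bigl(a_p^{p^{e_p}}[\xi_p]_{\alpha_p+p^{e_p}i}\bigr)$ when $0 \le \alpha_p < p^{e_p}$, and to zero otherwise. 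Here $[\xi_p]_j$ denotes the degree-$j$ homogeneous component, which makes sense because $\mathscr{R}_p((p^{e_p}-1)\mathscr{D}_p)$ is a graded $\mathscr{R}_p$-module and $\mathscr{M}_p$ is the homogeneous maximal ideal.

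First I would verify that the formula indeed lands in $F_*^{e_p}R_p((p^{e_p}-1)D_p)$. By the observation preceding the proposition and the relation $K_\mathscr{R} + \mathscr{D} \sim 0$, the degree-$j$ component of $\mathscr{R}_p((p^{e_p}-1)\mathscr{D}_p)$ is reflexively $R_p((p^{e_p}-1)D_p - j(K_{R_p}+D_p))$, so $a_p^{p^{e_p}}[\xi_p]_{\alpha_p+p^{e_p}i}$ lives in $R_p((p^{e_p}-1)D_p - \alpha_p(K_{R_p}+D_p))$, which is contained in $R_p((p^{e_p}-1)D_p)$ since $K_R+D$ is effective by Setting \ref{setting of rings with finitely generated anti-canonical algebras}. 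Then I would check $\mathscr{R}_p$-bilinearity: for a homogeneous element $r = bT^j \in \mathscr{R}_p$, both $\phi_p(r\cdot aT^{-i}, F_*^{e_p}\xi)$ and $\phi_p(aT^{-i}, r\cdot F_*^{e_p}\xi) = \phi_p(aT^{-i}, F_*^{e_p}(b^{p^{e_p}}T^{p^{e_p}j}\xi))$ reduce to $F_*^{e_p}\bigl((ab)^{p^{e_p}}[\xi_p]_{\alpha_p+p^{e_p}(i-j)}\bigr)$; for $j \le i$ this is immediate, while for $j > i$ the left side vanishes because the convention on the source module kills positive-degree parts, and the right side also vanishes because the cutoff $0 \le \alpha_p < p^{e_p}$ forces $\alpha_p + p^{e_p}(i-j) < 0$, killing the homogeneous component. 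This clarifies the role of the case distinction at $\alpha \ge \pi^\epsilon$: it is precisely what is required for $\mathscr{R}$-bilinearity. Factoring through the tensor product yields $\psi_{e_p,\alpha_p,p}$.

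To assemble these into $\psi_{\epsilon,\alpha}$, I would take the ultraproduct of the $\psi_{e_p,\alpha_p,p}$. Using the compatibility of ultraproducts with tensor products of approximations and with Frobenius pushforwards, the source and target are canonically identified with $\bigl(\bigoplus_{i \ge 0} R(i(K_R+D))T^{-i}\bigr)\otimes_\mathscr{R} F^\epsilon_*(\mathscr{R}_\mathscr{M})_\infty((\pi^\epsilon-1)\mathscr{D})$ and $F^\epsilon_*R_\infty((\pi^\epsilon-1)D)$ respectively, and restriction of scalars along $R \hookrightarrow R_\infty$ makes the result $R$-linear; the asserted formula then follows by construction. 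The main obstacle I anticipate is the careful handling of the homogeneous component $[\xi_p]_j$ when $\xi_p$ lies in the (non-graded) localization $\mathscr{R}_{p,\mathscr{M}_p}$: since $\mathscr{M}_p$ is homogeneous, one can make sense of this via the graded (homogeneous) localization at $\mathscr{M}_p$, but one has to verify that the resulting formula extends compatibly to $(\mathscr{R}_{p,\mathscr{M}_p})_\infty$ after the Frobenius pushforward is applied. Once this bookkeeping is settled, the bilinearity computation is formal and the ultraproduct step is routine.
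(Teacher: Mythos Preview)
Your argument is correct and follows essentially the same plan as the paper: verify that $a_p^{p^{e_p}}[\xi_p]_{\alpha_p+p^{e_p}i}$ lands in $R_p((p^{e_p}-1)D_p)$ via the divisor inequality $(p^{e_p}-1)D_p-\alpha_p(K_{R_p}+D_p)\le (p^{e_p}-1)D_p$, and then check $\mathscr{R}$-balancedness through the identity $(a_pb_p)^{p^{e_p}}[\xi_p]_{\alpha_p+p^{e_p}(i-j)}=a_p^{p^{e_p}}[b_p^{p^{e_p}}T^{jp^{e_p}}\xi_p]_{\alpha_p+p^{e_p}i}$ (your explicit treatment of the case $j>i$, and its link to the cutoff $\alpha<\pi^\epsilon$, is a point the paper leaves implicit). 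The only packaging difference is that the paper carries out both checks directly on the ultraproduct side, first defining the map over $\otimes_R$ and then factoring through $\otimes_{\mathscr{R}}$, whereas you build $\psi_{e_p,\alpha_p,p}$ at each $p$ and take the ultraproduct; just note that the domain of $\ulim_p\psi_{e_p,\alpha_p,p}$ is not literally identified with the stated source but only receives a natural comparison map from it, and composing with that map is what actually yields $\psi_{\epsilon,\alpha}$.
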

\begin{note}
	If $R$ is an $\N$-graded ring and $M$ is its unique maximal homogeneous ideal, we have the following commutative diagram:
	\[
		\xymatrix{
			R \ar[r] \ar[rd]& R_{M} \ar[d]\\
			& \widehat{R}^{R_{+}} 
		}.	
	\]
	Here $\widehat{R}^{R_{+}}$ is isomorphic to $\prod_{i\ge 0} R_i$ as an $R$-module. For an element $f\in R_{M}$ and $i\in \N$, {\it the $i$-th homogeneous part of $f$} is defined to be the image of $f$ under the morphism $R_{M}\to \widehat{R}^{R_+}\to R_i$. If $i<0$,  the $i$-th homogeneous part of $f$ is defined to be zero.
\end{note}
\begin{proof}
	Suppose that $\alpha<\pi^\epsilon$.
	For any $i\in \N$, $a\in R(i(K_R+D))$ and $\xi \in (\mathscr{R}_{\mathscr{M}})_\infty((\pi^\epsilon-1)\mathscr{D})$, we have
	$a_p^{p^{e_p}}[\xi_p]_{\alpha_p+p^{e_p}i}\in R_p((p^{e_p}-1)D_p)$ for almost all $p$. Indeed, 
	\begin{align*}
	p^{e_p}i(K_R+D)+(p^e-1)D_p-(\alpha_p+p^{e_p}i)(K_{R_p}+D_p) &= (p^{e_p}-1)D_p-\alpha_p(K_{R_p}+D_p) \\
	&\le (p^{e_p}-1)D_p
	\end{align*}
	for almost all $p$.
	Hence, we have an $R$-linear homomorphism
	$R(i(K_R+D))\otimes_{R}F^{\epsilon}_* (\mathscr{R}_{\mathscr{M}})_\infty((\pi^\epsilon-1)\mathscr{D}) \to F^\epsilon_*R_\infty((\pi^\epsilon-1)D)$.
	Summing up these morphisms, we get an $R$-linear homomorphism
	\[
	\left(\bigoplus_{i\ge 0}R(i(K_R+D))T^{-i}\right)\otimes_{R}F^{\epsilon}_* (\mathscr{R}_{\mathscr{M}})_\infty((\pi^\epsilon-1)\mathscr{D}) \to F^\epsilon_*R_\infty((\pi^\epsilon-1)D).
	\]
	It is enough to show that the above morphism factors through the tensor over $\mathscr{R}$.
	For any $i,j\in \N$, $a\in R(i(K_R+D))$, $b\in R(-j(K_R+D))$ and $\xi \in (\mathscr{R}_{\mathscr{M}})_\infty((\pi^\epsilon-1)\mathscr{D})$, we need to show that $abT^{j-i}\otimes F^{\epsilon}_*\xi$ and $aT^{-i}\otimes (bT^{j}\cdot F^\epsilon_*\xi)$ have the same image. Indeed, we have
	\[
		(a_pb_p)^{p^{e_p}}[\xi_p]_{\alpha_p+p^{e_p}(i-j)} = a_p^{p^{e_p}} [b_p^{p^{e_p}}T^{jp^{e_p}}\xi_p]_{\alpha_p+p^{e_p}i}.
	\]
\end{proof}
\begin{prop} \label{inifinite cyclic covers tau}
	With notation as in \ref{setting of rings with finitely generated anti-canonical algebras}, we have
	\[
		\tau_D^{\mathrm{u}}(R,D,f^t)\subseteq \tau_{\mathscr{D}}^{\mathrm{u}}(\mathscr{R}_{\mathscr{M}},\mathscr{D},f^t).
	\]
\end{prop}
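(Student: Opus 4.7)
The plan is to combine the alternative characterization of divisorial ultra-test ideals from Proposition \ref{tau over all modules} with the $R$-linear map $\psi_{\epsilon,\alpha}$ constructed in the previous proposition, in order to descend a tight-closure witness from an $\mathscr{R}_{\mathscr{M}}$-module to an $R$-module. Since $K_{\mathscr{R}}+\mathscr{D}\sim 0$ by the Observation, the triple $(\mathscr{R}_{\mathscr{M}},\mathscr{D},f^t)$ itself satisfies Setting \ref{setting of rings with finitely generated anti-canonical algebras}, so Proposition \ref{tau over all modules} applies with $\mathscr{R}_{\mathscr{M}}$ in place of $R$. It therefore suffices to show that, for every $a\in\tau_D^{\mathrm{u}}(R,D,f^t)$, every $\mathscr{R}_{\mathscr{M}}$-module $N$, and every $\eta\in 0_N^{*\mathrm{u}_{\mathscr{D}}\mathscr{D},f^t}$, one has $a\eta=0$. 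Fix such data and a witness $c\in\mathscr{R}^{\circ,\mathscr{D}}$ satisfying $\eta\otimes F^\epsilon_*(cf^{\lceil t\pi^\epsilon\rceil})=0$ in $N\otimes_{\mathscr{R}_{\mathscr{M}}}F^\epsilon_*(\mathscr{R}_{\mathscr{M}})_\infty((\pi^\epsilon-1)\mathscr{D})$ for every $\epsilon\in{^*}\N$.

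Tensor this identity on the $N$-factor with the graded $\mathscr{R}$-module $\Omega:=\bigoplus_{i\ge 0}R(i(K_R+D))T^{-i}$ over $\mathscr{R}_{\mathscr{M}}$, and apply $\mathrm{id}_N\otimes\psi_{\epsilon,\alpha}$ for a suitable $\alpha\in{^*}\Z$ and a chosen homogeneous element $\beta\in R(i(K_R+D))T^{-i}\subseteq\Omega$. Because $\psi_{\epsilon,\alpha}$ is $R$-linear and extracts the $(\alpha_p+p^{e_p}i)$-th homogeneous component of its argument, the resulting relation takes the form $(\eta\otimes\beta)\otimes F^\epsilon_*(\tilde c\,f^{\lceil t\pi^\epsilon\rceil})=0$ in the $R$-module $M\otimes_R F^\epsilon_*R_\infty((\pi^\epsilon-1)D)$, where $M:=N\otimes_{\mathscr{R}_{\mathscr{M}}}\Omega$ and $\tilde c\in R$ is the product of the coefficient of $\beta$ with a suitable homogeneous component of $c$. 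Because $c\notin\mathscr{R}(-\mathscr{D})$ and, by the preceding lemma, $\mathscr{R}/\mathscr{R}(-\mathscr{D})$ is a graded domain with degree zero part $R/R(-D)$, we may choose $\alpha$, $i$, and $\beta$ so that $\tilde c\in R^{\circ,D}$. This exhibits $\eta\otimes\beta\in 0_M^{*\mathrm{u}_DD,f^t}$, and Proposition \ref{tau over all modules} applied to $(R,D,f^t)$ then yields $a(\eta\otimes\beta)=0$ in $M$.

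The last step is to recover $a\eta=0$ in $N$ from the family of relations $a(\eta\otimes\beta)=0$ in $M$ for a sufficient supply of $\beta$'s: this is carried out by exploiting that the degree zero summand of $\Omega$ is $R$, so that $\eta\otimes 1_R$ identifies with the image of $\eta$ under a natural map, and by combining the identities obtained from other homogeneous components of $\Omega$ using the $\mathscr{R}_{\mathscr{M}}$-module structure of $N$. The main obstacle is the combinatorial choice of $\alpha$, $i$, and $\beta$ forcing $\tilde c\in R^{\circ,D}$; this step relies essentially on the graded domain structure of $\mathscr{R}/\mathscr{R}(-\mathscr{D})$ established in the preceding lemma, and it also governs the feasibility of the final recovery step.
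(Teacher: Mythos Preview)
Your plan has a genuine gap at both the tensoring step and the final recovery step. The map $\psi_{\epsilon,\alpha}$ is only $R$-linear, not $\mathscr{R}$-linear, so the expression ``$\mathrm{id}_N\otimes\psi_{\epsilon,\alpha}$'' is not a well-defined map out of $N\otimes_{\mathscr{R}_{\mathscr{M}}}\bigl(\Omega\otimes_{\mathscr{R}}F^\epsilon_*(\mathscr{R}_{\mathscr{M}})_\infty((\pi^\epsilon-1)\mathscr{D})\bigr)$ for an arbitrary $\mathscr{R}_{\mathscr{M}}$-module $N$: moving an element of $\mathscr{R}_+$ across the tensor destroys the homogeneous-component extraction that $\psi_{\epsilon,\alpha}$ performs. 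Even granting this, your last paragraph does not actually prove the recovery $a\eta=0$ in $N$ from $a(\eta\otimes\beta)=0$ in $M=N\otimes_{\mathscr{R}_{\mathscr{M}}}\Omega$; the natural map $N\to M$, $\eta\mapsto\eta\otimes 1$, has no reason to be injective for arbitrary $N$, and combining relations coming from other $\beta$'s does not fix this.

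The paper avoids both issues by \emph{not} working with arbitrary $N$. It takes $N=H^{d+1}_{\mathscr{M}}(\omega_{\mathscr{R}_{\mathscr{M}}})$ directly (which is what the definition of $\tau^{\mathrm u}_{\mathscr{D}}$ requires anyway) and exploits the explicit $\mathscr{R}$-module isomorphism from the Observation,
\[
H^{d+1}_{\mathscr{M}}(\omega_{\mathscr{R}})\;\cong\;H^d_{\m}(\omega_R)\otimes_R\Omega,
\]
so that the $\mathscr{R}$-tensor factor in $N\otimes_{\mathscr{R}}F^\epsilon_*(\mathscr{R}_{\mathscr{M}})_\infty((\pi^\epsilon-1)\mathscr{D})$ can be rewritten as an $R$-tensor with an $R$-module on the left and $\Omega\otimes_{\mathscr{R}}F^\epsilon_*(\mathscr{R}_{\mathscr{M}})_\infty((\pi^\epsilon-1)\mathscr{D})$ on the right; now $\mathrm{id}\otimes_R\psi_{\epsilon,i}$ is perfectly well-defined. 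The recovery step is replaced by a socle argument: if $x\eta\neq 0$ one may assume $x\eta$ lies in the (graded) socle, hence is homogeneous and concentrated in degree $-1$, so it is detected by the single component $\eta_{-1}\in H^d_{\m}(\omega_R)$. One shows $\eta_{-1}\in 0_{H^d_{\m}(\omega_R)}^{*\mathrm u_DD,f^t}$ with witness the degree-$i$ component $c_i\in R^{\circ,D}$ of $c$ (this is where the graded-domain structure of $\mathscr{R}/\mathscr{R}(-\mathscr{D})$ is used), and then $x\eta_{-1}=0$ gives the contradiction. In short, the graded decomposition of the specific module $H^{d+1}_{\mathscr{M}}(\omega_{\mathscr{R}})$ and the homogeneity of its socle are essential inputs that your arbitrary-$N$ approach cannot substitute for.
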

\begin{proof}
	Take $\eta\in 0_{H_{\mathscr{M}}^{d+1}(\omega_{\mathscr{R_{\mathscr{M}}}})}^{*\mathrm{u}_{\mathscr{D}}\mathscr{D},f^t}$ and $x\in \tau_{D}^{\mathrm{u}}(R,D,f^t)$. Suppose that $x\eta\neq 0$. We may assume that $x\eta\in \operatorname{Soc}_{\mathscr{R}}H^{d+1}_{\mathscr{M}}(\omega_{\mathscr{R}})$. Then $x\eta$ is a homogeneous element. There exists $c\in \mathscr{R}^{0,\mathscr{D}}$ such that for any $\epsilon \in {^*\N}$, we have $\eta\otimes F^{\epsilon}_*(cf^{\lceil t\pi^\epsilon \rceil})=0$ in $H_{\mathscr{M}}^{d+1}(\omega_{\mathscr{R}})\otimes_{\mathscr{R}}F^{\epsilon}_*(\mathscr{R}_{\mathscr{M}})_\infty ((\pi^\epsilon-1)\mathscr{D})$. Let $i\in \N$ be an integer such that the $i$-th homogeneous part $c_i$ of $c$ is not in $R(-D-i(K_R+D))$ if $D$ is prime and is nonzeo if $D=0$, and let $\eta_{-1}$ be the $({-1})$-th homogeneous part of the image of $\eta$ under the isomorphism $H_{\mathscr{M}}^{d+1}(\omega_{\mathscr{R}})\to \bigoplus_{i>0}H_{\m}^d(R(-D+i(K_R+D)))T^{-i}$. Then $c_i\in R^{\circ,D}$ because $K_R+D$ has no component equal to $D$, $c_i\in R(-i(K_R+D))$ and $c_i\notin R(-D-i(K_R+D))$. Considering the following composite morphism
	\begin{align*}
		H_{\mathscr{M}}^{d+1}(\omega_{\mathscr{R}}) &\to H_{\mathscr{M}}^{d+1}(\omega_{\mathscr{R}})\otimes_{\mathscr{R}}F^{\epsilon}_*(\mathscr{R}_{\mathscr{M}})_\infty ((\pi^\epsilon-1)\mathscr{D}) \\
		& \xrightarrow{\cong} H_\m^d(\omega_R)\otimes_R\left(\bigoplus_{i\ge 0}R(i(K_R+D))T^{-i}\right)\otimes_{\mathscr{R}}F^{\epsilon}_*(\mathscr{R}_{\mathscr{M}})_\infty ((\pi^\epsilon-1)\mathscr{D}) \\
		& \xrightarrow{\operatorname{id}\otimes\psi_{\epsilon,i}} H_\m^d(\omega_R)\otimes_R F^\epsilon_*R_\infty((\pi^\epsilon-1)D),
	\end{align*}
	we have $\eta \mapsto \eta\otimes F^{\epsilon}_*(cf^{\lceil t\pi^\epsilon \rceil})=0 \mapsto \eta_{-1}\otimes F^{\epsilon}_*(c_if^{\lceil t\pi^\epsilon \rceil})$. Hence, $\eta_{-1}\in 0_{H_{\m}^d(\omega_R)}^{*\mathrm{u}_DD,f^t}$. Hence, $x\eta_{-1}=0$. Since $x\eta \in  \operatorname{Soc}_{\mathscr{R}}H^{d+1}_{\mathscr{M}}(\omega_{\mathscr{R}})$ is a nonzero homogeneous element, $x\eta$ maps to $x\eta_{-1}$ under the isomorphism $H_{\mathscr{M}}^{d+1}(\omega_{\mathscr{R}})\to \bigoplus_{i>0}H_{\m}^d(R(-D+i(K_R+D)))T^{-i}$, which is a contradiction.
\end{proof}
\begin{prop} \label{Q-Gorenstein case}
	Let $R$ be a local normal domain essentially of finite type over $\C$, $D$ be a prime divisor on $\Spec R$ or $D=0$, $f\in R^{\circ,D}$ and $t$ be a positive rational number. Suppose that $K_R+D$ is Cartier. Then we have
	\[
		\tau_{D}^{\mathrm{u}}(R,D,f^t)=\adj_D(R,D,f^t).
	\]
\end{prop}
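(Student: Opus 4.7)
The plan is to combine Proposition~\ref{adj subset tau}, which gives $\adj_D(R,D,f^t)\subseteq\tau_D^{\mathrm{u}}(R,D,f^t)$, with a reverse inclusion $\tau_D^{\mathrm{u}}(R,D,f^t)\subseteq\adj_D(R,D,f^t)$ proved by reduction modulo $p$. Since $K_R+D$ is Cartier, $\bigoplus_{i\ge 0}R(-i(K_R+D))$ is automatically finitely generated, so \cite[Theorem 7.3]{TY24} applies and
\[
\adj_D(R,D,f^t)_p=\tau_{D_p}(R_p,D_p,f_p^t)=\Ann_{R_p}0_{H_{\m_p}^d(\omega_{R_p})}^{*_{D_p}D_p,f_p^t}
\]
for almost all $p$. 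Hence it suffices to show that if $x\in\tau_D^{\mathrm{u}}(R,D,f^t)$, then $x_p\in\tau_{D_p}(R_p,D_p,f_p^t)$ for almost all $p$.

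I would argue by contradiction: assume the set $S=\{p\mid x_p\notin\tau_{D_p}(R_p,D_p,f_p^t)\}$ lies in the ultrafilter. For each $p\in S$, pick $\eta_p\in 0_{H_{\m_p}^d(\omega_{R_p})}^{*_{D_p}D_p,f_p^t}$ with $x_p\eta_p\neq 0$. The first key ingredient is the existence of a \emph{uniform test element}, namely a single $c\in R^{\circ,D}$ whose approximation $(c_p)$ witnesses the tight closure condition for every $\eta_p$ (this is where the Cartier hypothesis enters, since uniform test element existence in reductions modulo $p$ is standard in the $\Q$-Gorenstein case, e.g.\ via the Hochster--Huneke theory applied to the strongly $F$-regular locus). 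With such a $c$, we have
\[
c_pf_p^{\lceil tp^e\rceil}\otimes\eta_p=0\quad\text{in }F_*^eR_p((p^e-1)D_p)\otimes_{R_p}H_{\m_p}^d(\omega_{R_p})
\]
for every $e\ge 0$ and almost all $p\in S$. Then I would assemble $\eta=\ulim_p\eta_p\in\ulim_pH_{\m_p}^d(\omega_{R_p})$, use that $H_{\m}^d(\omega_R)\to\ulim_pH_{\m_p}^d(\omega_{R_p})$ is injective (as cited in the proof of Proposition~\ref{adj subset tau}) to work with its image, and verify that $\eta$ lies in $0_{H_{\m}^d(\omega_R)}^{*\mathrm{u}_DD,f^t}$: for $\epsilon=\ulim_pe_p\in{^*}\N$, the element $\eta\otimes F_*^\epsilon(cf^{\lceil t\pi^\epsilon\rceil})$ maps, under the natural map
\[
H_{\m}^d(\omega_R)\otimes_R F_*^\epsilon R_\infty((\pi^\epsilon-1)D)\to\ulim_p\bigl(H_{\m_p}^d(\omega_{R_p})\otimes_{R_p}F_*^{e_p}R_p((p^{e_p}-1)D_p)\bigr),
\]
to the ultralimit of the vanishing expressions $c_pf_p^{\lceil tp^{e_p}\rceil}\otimes\eta_p=0$. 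Once the target vanishes, injectivity of this map yields $\eta\otimes F_*^\epsilon(cf^{\lceil t\pi^\epsilon\rceil})=0$ for every $\epsilon$, so $\eta\in0_{H_\m^d(\omega_R)}^{*\mathrm u_DD,f^t}$, and Proposition~\ref{tau over all modules} then gives $x\eta=0$, which pushes down to $x_p\eta_p=0$ for almost all $p\in S$, a contradiction.

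The main obstacle is precisely the injectivity of the displayed natural map above, which is not automatic for arbitrary modules but is controlled in the Cartier setting: the hypothesis $K_R+D$ Cartier implies that $R((\pi^\epsilon-1)D)$ is, up to an invertible Cartier twist, the reflexive module $R(-(\pi^\epsilon-1)K_R)$, and tensor products against such locally-free-in-codimension-one pieces are compatible with the ultraproduct formation of Frobenius pushforwards. Combined with a careful choice of uniform test element, this is what upgrades the mod-$p$ tight closure conditions to the nonstandard condition over all $\epsilon\in{^*}\N$ that appears in Definition~\ref{definition of divisorial ultra-test ideals}. In the non-$\Q$-Gorenstein case this compatibility fails, which is precisely why the general theorem (Theorem~\ref{divisorial ultra-test ideals = adjoint ideals}) requires the more elaborate detour through the anti-canonical cover $\mathscr{R}$ set up in Proposition~\ref{inifinite cyclic covers tau}.
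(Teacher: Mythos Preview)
Your argument has a genuine gap. The assembled element $\eta=\ulim_p\eta_p$ lives in the ultraproduct $\ulim_p H_{\m_p}^d(\omega_{R_p})$, not in $H_\m^d(\omega_R)$; the injection $H_\m^d(\omega_R)\hookrightarrow\ulim_p H_{\m_p}^d(\omega_{R_p})$ gives no reason for $\eta$ to lie in its image, so the clause ``verify that $\eta$ lies in $0_{H_\m^d(\omega_R)}^{*\mathrm{u}_DD,f^t}$'' does not type-check. If you instead invoke Proposition~\ref{tau over all modules} with $M=\ulim_p H_{\m_p}^d(\omega_{R_p})$, you must verify vanishing of $\eta\otimes F_*^\epsilon(cf^{\lceil t\pi^\epsilon\rceil})$ in $M\otimes_R F_*^\epsilon R_\infty((\pi^\epsilon-1)D)$, and for that you would need the natural map from this tensor product into $\ulim_p\bigl(H_{\m_p}^d(\omega_{R_p})\otimes_{R_p}F_*^{e_p}R_p((p^{e_p}-1)D_p)\bigr)$ to be injective. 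The injectivity results available (from \cite{Yam24}) concern maps of the shape $H_\m^i(N_\infty)\to\ulim_p H_{\m_p}^i(N_p)$ with $N_\infty=\ulim_p N_p$; neither the map you display (whose source is a tensor product, not a local cohomology of an ultraproduct) nor the one actually needed for the $M$-version is of this shape, and the hypothesis that $K_R+D$ is Cartier says nothing direct about $(\pi^\epsilon-1)D$.

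The paper circumvents this by running the argument in the opposite direction. It begins with $\eta\in H_\m^d(\omega_R)$ annihilated by $\adj_D(R,D,f^t)$, invokes the BCM characterization of adjoint ideals \cite[Theorem~5.7]{TY24} to obtain $\eta_p\in 0^{+_{D_p}}=0^{*_{D_p}}$ for almost all $p$, and then uses the Cartier hypothesis concretely: choosing $g\in R^{\circ,D}$ with $\Div(g)=K_R+D$ and $c\in\adj_D\cap R^{\circ,D}$, the assignment $\eta\mapsto\eta\otimes F_*^\epsilon(cf^{\lceil t\pi^\epsilon\rceil})$ is factored through $H_\m^d(F_*^\epsilon R_\infty(-D))$. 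For \emph{this} intermediate module the comparison map $\beta$ to $\ulim_p H_{\m_p}^d(F_*^{e_p}R_p(-D_p))$ has the right shape and is injective by \cite{Yam24}. The vanishing in the ultraproduct (coming from $\eta_p\in 0^{*_{D_p}}$ together with $c_p\in\tau_{D_p}$) then pulls back to give $\eta\in 0^{*\mathrm{u}_D}_{H_\m^d(\omega_R)}$, and duality yields $\tau_D^{\mathrm{u}}\subseteq\adj_D$. The two ingredients your sketch is missing are the BCM input, which lets one start from an honest element of $H_\m^d(\omega_R)$ rather than an ultraproduct, and the factorization through $H_\m^d(F_*^\epsilon R_\infty(-D))$, which is where the generator $g$ of $K_R+D$ is actually used.
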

\begin{proof}
	Let $\eta\in \Ann_{H_{\m}^d(\omega_R)}\adj_{D}(R,D,f^t)$. By \cite[Theorem 5.7]{TY24}, $\eta \in 0_{H_\m^d(\omega_R)}^{\mathcal{B}(I_D,D+t\Div (f))}$ (see \cite[Definition 5.2, Definition 5.4]{TY24}). By the following commutative diagram
	\[
		\xymatrix{
			H_{\m}^d(\omega_R) \ar[r] \ar[d]& H_\m^d(\omega_R)\otimes_R \mathcal{B}(I_D,D+t\Div(f)) \ar[d]\\
			\ulim_{p} H_{\m_p}^d(\omega_{R_p}) \ar[r] & \ulim_p H_{\m_p}^d(\omega_{R_p})\otimes_{R_p}I_{D_p}^+(D_p+t \Div (f_p))
		}
	\]
	(see \cite[Definition 4.2]{TY24} for the definition of $I_{D_p}^+(D_p+t\Div(f_p))$),
	we have 
	\[
		\eta_p\in 0_{H_{\m_p}^d(\omega_{R_p})}^{+_{D_p}(D_p+t\Div(f_p))}=0_{H_{\m_p}^d(\omega_{R_p})}^{*_{D_p}(D_p+t\Div(f_p))}
	\]
	for almost all $p$. Let $g\in R^{\circ, D}$ such that $\Div(g)=K_R+D$. We have an $R$-linear homomorphism
	\begin{align*}
		H_{\m}^d(R)\otimes_R F^\epsilon_* R_\infty(-D) &\xrightarrow[\cong]{\cdot \frac{1}{g}} H_\m^d(R)\otimes_R R(K_R+D)\otimes_R F^\epsilon_*R_\infty (-D) \\
		& \to H_\m^d(R)\otimes_R R(K_R) \otimes_R F^\epsilon_* R_\infty((\pi^\epsilon-1)D).
	\end{align*}
	Take $c\in \adj_D(R,D,f^t)\cap R^{\circ,D}$. Then we have the following commutative diagram
	\[
		\xymatrix{
		H_\m^d(\omega_R) \ar[r]^-{\alpha} \ar[d] & H_\m^d(F^\epsilon_*R_\infty(-D)) \ar[d]^-{\beta} \ar[r] & H_\m^d(\omega_R)\otimes_R F^\epsilon_* R_\infty ((\pi^\epsilon-1)D) \ar[d] \\
		\ulim_p H_{\m_p}^d(\omega_{R_p}) \ar[r] & \ulim_p H_{\m_p}^d(F^{e_p}_*R_p(-D_p)) \ar[r]_-{\cong}^-{\ulim_p \frac{1}{g_p}} & \ulim_p (H_{\m_p}^d(\omega_{R_p})\otimes_{R_p} F^{e_p}_*R_p((p^{e_p}-1)D_p))
	},
	\]
	where $\alpha$ is defined as follows: Let $y\in \omega_R$, $s\in \N$ and $x$ be the product of a system of parameters $x_1, \dots, x_d$. Then 
	\[
		\alpha\left(\left[\frac{y}{x^s}\right]\right)=\left[\frac{F^\epsilon_*(cf^{\lceil t\pi^\epsilon\rceil}g^{\pi^\epsilon}y^{\pi^\epsilon})}{x^s}\right].
	\]
	In the above commutative diagram, $\beta$ is injective by \cite[Proposition 3.5, Proof of Propositoin 3.9]{Yam24}. Hence, $\eta\in 0_{H_\m^d(\omega_R)}^{*\mathrm{u}_DD,f^t}$, which completes the proof.
\end{proof}
\begin{thm} \label{divisorial ultra-test ideals = adjoint ideals}
	With notation as in Setting \ref{setting of rings with finitely generated anti-canonical algebras}, we have
	\[
		\tau_{D}^{\mathrm{u}}(R,D,f^t)=\adj_D(R,D,f^t).
	\]
\end{thm}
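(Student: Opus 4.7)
The inclusion $\adj_D(R,D,f^t) \subseteq \tau_D^{\mathrm{u}}(R,D,f^t)$ is already handled by Proposition \ref{adj subset tau}. For the reverse inclusion, the idea is to pass to the anti-canonical algebra $\mathscr{R}$, where the divisor $K_{\mathscr{R}} + \mathscr{D}$ is Cartier, and reduce to the $\mathbb{Q}$-Gorenstein case established in Proposition \ref{Q-Gorenstein case}.

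More precisely, by the Observation preceding Proposition \ref{inifinite cyclic covers tau} (applying \cite[Theorem 4.5]{GHNV90}), $K_{\mathscr{R}} + \mathscr{D}$ is linearly equivalent to zero, so $K_{\mathscr{R}_{\mathscr{M}}} + \mathscr{D}$ is Cartier on $\Spec \mathscr{R}_{\mathscr{M}}$. Since $f \in R^{\circ,D}$ remains in $\mathscr{R}^{\circ,\mathscr{D}}$, Proposition \ref{Q-Gorenstein case} applied to the triple $(\mathscr{R}_{\mathscr{M}}, \mathscr{D}, f^t)$ yields
\[
\tau_{\mathscr{D}}^{\mathrm{u}}(\mathscr{R}_{\mathscr{M}}, \mathscr{D}, f^t) = \adj_{\mathscr{D}}(\mathscr{R}_{\mathscr{M}}, \mathscr{D}, f^t).
\]
Combining this with Proposition \ref{inifinite cyclic covers tau} (and the inclusion $R \hookrightarrow \mathscr{R}_{\mathscr{M}}$ coming from the degree-zero piece) produces the chain
\[
\tau_{D}^{\mathrm{u}}(R, D, f^t) \subseteq \tau_{\mathscr{D}}^{\mathrm{u}}(\mathscr{R}_{\mathscr{M}}, \mathscr{D}, f^t) \cap R = \adj_{\mathscr{D}}(\mathscr{R}_{\mathscr{M}}, \mathscr{D}, f^t) \cap R.
\]

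It then suffices to establish the descent
\[
\adj_{\mathscr{D}}(\mathscr{R}_{\mathscr{M}}, \mathscr{D}, f^t) \cap R \subseteq \adj_D(R, D, f^t).
\]
I would prove this by reduction modulo $p \gg 0$: by \cite[Theorem 7.3]{TY24}, the reductions of both sides are computed by the corresponding divisorial test ideals, so it is enough to show, for almost all $p$,
\[
\tau_{\mathscr{D}_p}\bigl(\mathscr{R}_{p, \mathscr{M}_p}, \mathscr{D}_p, f_p^t\bigr) \cap R_p \subseteq \tau_{D_p}(R_p, D_p, f_p^t).
\]
This positive-characteristic analogue follows by the same graded-local-cohomology comparison as the proof of Proposition \ref{inifinite cyclic covers tau}, with ordinary Frobenii in place of ultra-Frobenii, exploiting the decomposition of $H^{d+1}_{\mathscr{M}_p}(\omega_{\mathscr{R}_p})$ recalled in the Observation; indeed, this is essentially the content of the Watanabe-type argument used in \cite{CEMS18}.

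The main obstacle is precisely this last descent inclusion. The inclusion $R \hookrightarrow \mathscr{R}_{\mathscr{M}}$ is not finite, so one cannot invoke standard base-change formulas for adjoint (or test) ideals, and the argument must exploit the explicit graded structure of $H^{d+1}_{\mathscr{M}}(\omega_{\mathscr{R}})$. This is where the finite generation hypothesis on the anti-canonical algebra plays an essential role: it is what permits the graded decomposition isolating the degree $-1$ component that carries the information of $H^d_\m(\omega_R)$, and hence allows test-ideal data on $\mathscr{R}_{\mathscr{M}}$ to be pulled back to $R$.
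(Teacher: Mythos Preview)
Your outline is correct and follows the paper's proof essentially step for step. The one difference in packaging concerns the descent $\adj_{\mathscr{D}}(\mathscr{R}_{\mathscr{M}},\mathscr{D},f^t)\cap R\subseteq \adj_D(R,D,f^t)$: after reducing modulo $p$ exactly as you propose, the paper does not redo a graded local-cohomology comparison but instead observes that $R_p\to(\mathscr{R}_{\mathscr{M}})_p$ is split (hence pure) for almost all $p$ and applies a short standalone lemma stated immediately after the theorem---for any pure local map $R\to S$ of $F$-finite normal domains with $S(-E)=(R(-D)S)^{**}$ one has $\tau_E(S,E,f^t)\cap R\subseteq \tau_D(R,D,f^t)$, proved via the all-modules characterization of divisorial test ideals. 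One small caution about your phrasing: Proposition~\ref{inifinite cyclic covers tau} establishes $\tau_D^{\mathrm u}(R)\subseteq\tau_{\mathscr D}^{\mathrm u}(\mathscr R_{\mathscr M})$, which is the \emph{opposite} direction from the one the descent requires, so while the Watanabe/\cite{CEMS18} technique you invoke does furnish the needed characteristic-$p$ inclusion, it is not literally ``the same'' comparison as in that proposition but rather its companion.
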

\begin{proof}
	By Proposition \ref{adj subset tau}, it is enough to show $\tau_{D}^{\mathrm{u}}(R,D,f^t)\subseteq\adj_D(R,D,f^t)$. By Proposition \ref{inifinite cyclic covers tau}, $\tau_D^{\mathrm{u}}(R,D,f^t)\subseteq \tau_{\mathscr{D}}^{\mathrm{u}}(\mathscr{R}_{\mathscr{M}},\mathscr{D},f^t)$. By Proposition $\ref{Q-Gorenstein case}$, we have
	\[
		\tau_{\mathscr{D}}^{\mathrm{u}}(\mathscr{R}_{\mathscr{M}},\mathscr{D},f^t)=\adj_{\mathscr{D}}(\mathscr{R}_{\mathscr{M}},\mathscr{D},f^t).
	\]
	Since $R_p\to (\mathscr{R}_{\mathscr{M}})_p$ is split injective for almost all $p$, by comparing with reductions modulo $p>0$ and applying the following lemma, we have $\adj_{\mathscr{D}}(\mathscr{R}_{\mathscr{M}},\mathscr{D},f^t)\cap R\subseteq \adj_D(R,D,f^t)$. Hence, the desired inclusion $\tau_{D}^{\mathrm{u}}(R,D,f^t)\subseteq\adj_D(R,D,f^t)$ follows.
\end{proof}
\begin{lem}
	$R\to S$ is a pure local homomorphism between $F$-finite normal local domains of characteristic $p>0$. Let $D$ and $E$ be prime divisors on $\Spec R$ and $\Spec S$, respectively. Suppose that $S(-E)=(R(-D)\cdot S)^{**}$. Let $f\in R^{\circ,D}$ and $t$ be a positive real number. Then we have
	\[
		\tau_E(S,E,f^t)\cap R\subseteq \tau_D(R,D,f^t).
	\]
\end{lem}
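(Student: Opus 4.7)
My plan is to use the characterization $\tau_D(R,D,f^t)=\bigcap_M \Ann_R 0_M^{*_DD,f^t}$ from Proposition \ref{divisorial test ideal characterization}(2), where $M$ ranges over all $R$-modules. Fix such an $M$ and an element $\eta\in 0_M^{*_DD,f^t}$; I must show that every $x\in\tau_E(S,E,f^t)\cap R$ annihilates $\eta$. By purity, the canonical map $M\to M\otimes_R S$ is injective, so it suffices to prove $x\cdot(\eta\otimes 1)=0$ in $M\otimes_R S$, which in turn will follow once I show $\eta\otimes 1\in 0_{M\otimes_R S}^{*_EE,f^t}$: the analogous characterization of $\tau_E(S,E,f^t)$ then forces the desired annihilation.

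The key technical step is to construct, for each $e\ge 0$, a natural $S$-linear map
\[
F^e_*R((p^e-1)D)\otimes_R S\longrightarrow F^e_*S((p^e-1)E).
\]
Starting from the inclusion $R((p^e-1)D)\hookrightarrow\mathrm{Frac}(R)\hookrightarrow\mathrm{Frac}(S)$, I need to verify that any element of $R((p^e-1)D)$ has pole of order at most $p^e-1$ along $E$ and is regular along every other height-one prime of $S$. The hypothesis $(R(-D)\cdot S)^{**}=S(-E)$ supplies two ingredients that make this work: first, $E$ is the unique height-one prime of $S$ containing $R(-D)\cdot S$, because the minimal primes of an ideal and those of its reflexive hull coincide; second, localizing at $E$ gives $R(-D)\cdot S_E=\mathfrak m S_E$, so a uniformizer of $R_D$ maps to a uniformizer of $S_E$ and the ramification index of $E$ over $D$ is one. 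Combined with going-down for pure morphisms, which forces $E\cap R=D$, a prime-by-prime check shows the inclusion lands in $S((p^e-1)E)$, and then the map on Frobenius pushforwards follows by functoriality of $F^e_*$.

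To conclude, take a witness $c\in R^{\circ,D}$ for $\eta\in 0_M^{*_DD,f^t}$, so that $F^e_*(cf^{\lceil tp^e\rceil})\otimes\eta=0$ in $F^e_*R((p^e-1)D)\otimes_R M$ for every $e\ge 0$. Base-changing this relation along $R\to S$ and using the canonical identification $F^e_*R((p^e-1)D)\otimes_R M\otimes_R S\cong(F^e_*R((p^e-1)D)\otimes_R S)\otimes_S(M\otimes_R S)$, followed by the comparison map built above, produces the relation $F^e_*(cf^{\lceil tp^e\rceil})\otimes(\eta\otimes 1)=0$ in $F^e_*S((p^e-1)E)\otimes_S(M\otimes_R S)$ for every $e$. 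Since $E\cap R=D$, the condition $c\notin D$ immediately gives $c\notin E$, hence $c\in S^{\circ,E}$ is a valid witness, and therefore $\eta\otimes 1\in 0_{M\otimes_R S}^{*_EE,f^t}$, completing the argument.

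The main obstacle I expect is the rigorous construction of the comparison map in the second paragraph: ruling out spurious poles at height-one primes of $S$ other than $E$ depends crucially on the reflexive-hull hypothesis, and controlling the contractions of primes relies on going-down for pure morphisms. Once that map is in place, the transfer of the tight-closure relation and the purity-based conclusion are essentially formal.
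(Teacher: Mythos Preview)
Your proof is correct and follows essentially the same strategy as the paper: both use the characterization $\tau_D(R,D,f^t)=\bigcap_M \Ann_R 0_M^{*_DD,f^t}$, establish the inclusion $R(nD)\subseteq S(nE)$ (you via the ramification index, the paper via an auxiliary element $a\in R(-D)\setminus S(-2E)$), and then push the tight-closure witness along the resulting comparison map to obtain $0_M^{*_DD,f^t}\hookrightarrow 0_{M\otimes_R S}^{*_EE,f^t}$. One caveat: ``going-down for pure morphisms'' is not a standard fact and is not what you actually need; the contraction $E\cap R=D$ follows instead from surjectivity of $\Spec S\to\Spec R$ (a genuine consequence of purity), exactly as the paper argues---take any $Q$ lying over $R(-D)$, note $Q$ contains the unique minimal prime $S(-E)$ of $R(-D)S$, and squeeze.
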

\begin{proof}
	Since $R\to S$ is pure, $\Spec S \to \Spec R$ is surjective. Since $S(-E)$ is the unique height one prime containing $R(-D)$, $S(-E)$ is lying over $R(-D)$. Therefore, $f\in S^{\circ, E}$.
	\begin{cl}
		For any $n\ge 0$, $R(nD)\subseteq S(nE)$.
	\end{cl}
	\begin{clproof}
		Let $g\in R(nD)$. Let $F$ be any prime divisor on $\Spec S$. If $F\neq E$, we can take $a\in R(-D)\setminus S(-F)$. Hence, $ga^n\in R\subseteq S$. Therefore, $\mathrm{ord}_F(\Div(g))\ge 0$. If $F=E$, then we can take $a\in R(-D)\setminus S(-2E)$ since $S(-E)=(R(-D)\cdot S)^{**}$. Then $ga^n\in R\subseteq S$. Hence, $\mathrm{ord}_{E}(\Div(g))\ge -n$. Therefore, we see that $g\in S(nE)$.
	\end{clproof}
	Hence, $R^{\circ,D}\subseteq S^{\circ,E}$ and $R((p^e-1)D)\subseteq S((p^e-1)E)$ for any $e\ge 0$. For any $R$-module $M$, we obtain
	\[
		0_{M}^{*_D,f^t}\hookrightarrow 0_{M\otimes_R S}^{*_EE,f^t}.
	\]
	By Proposition \ref{divisorial test ideal characterization}, we get
	\begin{align*}
		\tau_D(R,D,f^t) &= \bigcap_{M}\Ann_R 0_M^{*_DD,f^t} \\
			&\supseteq \bigcap_{M}\Ann_R 0_{M\otimes_R S}^{*_EE,f^t} \\
			&\supseteq \bigcap_{N}\Ann_R 0_{N}^{*_EE,f^t}\\
			&= \tau_E(S,E,f^t)\cap R,
	\end{align*}
	where $M$ (resp. $N$) runs through all $R$-modules (resp. $S$-modules).
\end{proof}
\begin{setting} \label{setting of pairs with ideals}
		Let $(R,\m)$ be a local normal domain essentially of finite type over $\C$ of dimension $d$, $D$ be a prime divisor or $D=0$,  $\ba\subseteq R$ be an ideal such that $\ba\cap R^{\circ,D}\neq \emptyset$ and $t$ be a positive rational number. Suppose that $\mathscr{R}=\bigoplus_{i\ge 0}R(-i(K_R+D))$ is a finitely generated $R$-algebra. Fix a canonical divisor $K_R$ of $\Spec R$ such that $K_R+D$ is effective and $K_R+D$ has no component equal to $D$.
\end{setting}
\begin{defn}
	With notation as in Setting \ref{setting of pairs with ideals}, take $f_1,\dots, f_n\in R^{\circ,D}$ such that $\ba=(f_1,\dots, f_n)$ and let $M$ be an $R$-module. Then we define
	\[
		0_{M}^{*\mathrm{u}_DD,f_1,\dots,f_n,t}:=\bigcap_{m\in \N}\bigcap_{i_1,\dots,i_n}0_{M}^{*\mathrm{u}_DD,(f_1^{i_1}\dots f_n^{i_n})^{\frac{1}{m}}},
	\]
	where $i_1,\dots,i_n$ run through all non-negative integers such that $i_1+\dots+i_n=\lceil mt \rceil$.
\end{defn}
\begin{thm} \label{adjoint ideals under pure morphisms}
	With notation as in Setting \ref{setting of pairs with ideals}, let $(S,\n)$ be a normal local domain essentially of finite type over $\C$. Suppose that $S$ is an $R$-algebra and $R\to S$ is a pure local $\C$-algebra homomorphism. Let $E$ be the Weil divisor on $\Spec S$ such that $S(-E)=(R(-D)\cdot S)^{**}$, where $(-)^{**}$ denotes the reflexive hull as an $S$-module, and suppose that $E$ is prime. Then we have
	\[
		\adj_E(S,E,(\ba S)^t)\cap R\subseteq \adj_D(R,D,\ba^t).
	\]
\end{thm}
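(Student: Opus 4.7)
The plan is to imitate the concluding step of the proof of Theorem~\ref{divisorial ultra-test ideals = adjoint ideals}: reduce the inclusion to a statement in characteristic $p$ via Takagi's correspondence (\cite[Theorem~7.3]{TY24}), and then apply a suitable generalization of the pure morphism lemma appearing at the end of that proof. As a preliminary, one passes to the $\Q$--Cartier regime on the $S$--side: by the Proposition from Section~2.1 (\cite[Remark~6.4]{dFH09},\cite[Proposition~2.6]{TY24}), one may choose an effective $\Q$--Weil divisor $\Delta'$ on $\Spec S$ having no components in common with $E$ such that $K_S+E+\Delta'$ is $\Q$--Cartier and
\[
\adj_E(S, E, (\ba S)^t)=\adj_E(S, E+\Delta', (\ba S)^t).
\]

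Fix a model $A$ of all the data and pass to reductions modulo $p$. By \cite[Theorem~7.3]{TY24}, for a general closed point $\mu\in\Spec A$ one has $\adj_D(R,D,\ba^t)_\mu=\tau_{D_\mu}(R_\mu,D_\mu,\ba_\mu^t)$ (using the finite generation of $\mathscr{R}$) and $\adj_E(S,E+\Delta',(\ba S)^t)_\mu=\tau_{E_\mu}(S_\mu,E_\mu+\Delta'_\mu,(\ba_\mu S_\mu)^t)$ (using the $\Q$--Cartierness of $K_S+E+\Delta'$). Moreover, since $R\to S$ is pure, $R_\mu\to S_\mu$ is pure (indeed split injective) for almost all $\mu$. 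The core step is then to establish a generalization of the final lemma of Theorem~\ref{divisorial ultra-test ideals = adjoint ideals} accommodating both the ideal $\ba$ and the auxiliary divisor $\Delta'$: for a pure local homomorphism of $F$--finite normal local domains in characteristic $p>0$ with the data described,
\[
\tau_{E_\mu}(S_\mu,E_\mu+\Delta'_\mu,(\ba_\mu S_\mu)^t)\cap R_\mu\subseteq\tau_{D_\mu}(R_\mu,D_\mu,\ba_\mu^t).
\]
The proof mirrors the existing lemma: combine the chain $R_\mu((p^e-1)D_\mu)\subseteq S_\mu((p^e-1)E_\mu+\lceil p^e\Delta'_\mu\rceil)$ (from the existing lemma's claim together with $\Delta'_\mu\ge 0$) with $(\ba_\mu S_\mu)^{\lceil tp^e\rceil}=\ba_\mu^{\lceil tp^e\rceil}S_\mu$ to obtain, for any $R_\mu$--module $M$, a natural map sending $0_M^{*_{D_\mu}D_\mu,\ba_\mu^t}$ into $0_{M\otimes_{R_\mu}S_\mu}^{*_{E_\mu}E_\mu+\Delta'_\mu,(\ba_\mu S_\mu)^t}$, and then apply Proposition~\ref{divisorial test ideal characterization} together with purity.

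Combining these pieces yields $\adj_E(S,E,(\ba S)^t)_\mu\cap R_\mu\subseteq\adj_D(R,D,\ba^t)_\mu$ for general $\mu$, which descends to the desired inclusion over $\C$. The principal technical obstacle is the extended pure morphism lemma, in particular the verification of the inclusion of tight closures in the presence of the ideal $\ba$: the condition for $z\otimes 1$ to lie in $0^{*_{E_\mu}E_\mu+\Delta'_\mu,(\ba_\mu S_\mu)^t}$ involves the full ideal $(\ba_\mu S_\mu)^{\lceil tp^e\rceil}$ of $S_\mu$, not merely its generators coming from $\ba_\mu^{\lceil tp^e\rceil}$, so some care is required in the tensor computation, while the auxiliary divisor $\Delta'_\mu$ contributes only floor/ceiling bookkeeping and is handled exactly as in the existing lemma.
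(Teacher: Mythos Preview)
Your proposal contains a genuine gap at the step where you assert that ``since $R\to S$ is pure, $R_\mu\to S_\mu$ is pure (indeed split injective) for almost all $\mu$.'' Purity is not known to descend under reduction modulo $p$ in this generality. In the proof of Theorem~\ref{divisorial ultra-test ideals = adjoint ideals} the map $R\to\mathscr{R}_{\mathscr{M}}$ is visibly split (projection to degree zero), and that splitting is what spreads out; for an arbitrary pure local $\C$-algebra homomorphism $R\to S$ there is no such splitting to carry along, and cyclic purity for \emph{every} ideal of $R_\mu$ (including those not arising from $R$) does not follow from the finitely many conditions one can encode in a model. This is precisely the obstruction that forces Schoutens, Zhuang, and \cite{TY24} to work with ultra-Frobenii or big Cohen--Macaulay algebras rather than with naive reduction of the pure map. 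Your identification of the ideal $\ba$ as the ``principal technical obstacle'' is misplaced: the inclusion $0_M^{*_{D_\mu}D_\mu,\ba_\mu^t}\hookrightarrow 0_{M\otimes S_\mu}^{*_{E_\mu}E_\mu+\Delta'_\mu,(\ba_\mu S_\mu)^t}$ is routine once purity in characteristic $p$ is available, since $F^e_*S_\mu((p^e-1)E_\mu+\lceil p^e\Delta'_\mu\rceil)\otimes_{S_\mu}(M\otimes_{R_\mu}S_\mu)$ carries an $F^e_*S_\mu$-module structure that absorbs the coefficients in $(\ba_\mu S_\mu)^{\lceil tp^e\rceil}=\ba_\mu^{\lceil tp^e\rceil}S_\mu$.

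The paper's proof takes a different route that avoids reducing $R\to S$ modulo $p$ altogether. It works entirely in characteristic zero with ultra-Frobenii: for each monomial $f=f_1^{i_1}\cdots f_n^{i_n}$ one shows $0_M^{*\mathrm{u}_D D,f^{1/m}}\hookrightarrow 0_{M\otimes_R S}^{*\mathrm{u}_E E,f^{1/m}}$ directly, using the map $F^\epsilon_*R_\infty((\pi^\epsilon-1)D)\to F^\epsilon_*S_\infty((\pi^\epsilon-1)E)$ furnished by \cite[Propositions~6.6, 6.7]{TY24} together with the purity of $R\to S$ over $\C$. One then invokes Theorem~\ref{divisorial ultra-test ideals = adjoint ideals} (for $R$, using the finite generation of $\mathscr{R}$) and the easy inclusion of Proposition~\ref{adj subset tau} (for $S$) to translate the closure comparison into the desired containment of adjoint ideals. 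If you want to salvage your strategy, you would need an independent argument that purity of $R\to S$ forces purity of $R_\mu\to S_\mu$ for general $\mu$; absent that, the ultra-Frobenius route is the one that actually closes.
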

\begin{proof}
	Take $f_1,\dots, f_n\in R^{\circ,D}$ such that $\ba=(f_1,\dots, f_n)$.
	Let $m, i_1,\dots,i_n\in \N$ such that $i_1+\dots+i_n=\lceil mt\rceil$, $f=f_1^{i_1}\dots f_n^{i_n}$ and $M$ be an $R$-module. 
	\begin{cl}
		$0_{M}^{*\mathrm{u}_DD,f^{1/m}}\hookrightarrow 0_{M\otimes_R S}^{*\mathrm{u}_E E,f^{1/m}}$
	\end{cl}
	\begin{clproof}
	For $\eta\in 0_{M}^{*\mathrm{u}_DD,f^{1/m}}$, there exists $c\in R^{\circ,D}$ such that for any $\epsilon\in {^*\N}$, $\eta\otimes F^\epsilon_*(cf^{\lceil \pi^{\epsilon}/m \rceil})=0$ in $M\otimes_R F^{\epsilon}_*R_\infty((\pi^\epsilon-1)D)$.
	By \cite[Proposition 6.7]{TY24}, we have a commutative diagram
	\[
	\xymatrix{
		M \ar@{^{(}->}[r]\ar[d]_-{\operatorname{id_M}\otimes F^\epsilon_*(cf^{\lceil \frac{1}{m}\pi^\epsilon \rceil})} & M\otimes_R S \ar[d]^-{\operatorname{id_M}\otimes F^\epsilon_*(cf^{\lceil \frac{1}{m}\pi^\epsilon \rceil})} \\
		M\otimes_R F^\epsilon_{*}R_\infty((\pi^\epsilon-1)D) \ar[r]& M\otimes_R F^\epsilon_*S((\pi^\epsilon-1)E).
	}
	\]
	Since $c\in R^{\circ,D}\subseteq S^{\circ,E}$ by \cite[Proposition 6.6]{TY24}, we have $\eta\in 0_{M\otimes_R S}^{*\mathrm{u}_E E,f^{1/m}}$.	
	\end{clproof}
	By the above claim, it follows that
	\begin{align*}
		0_M^{*\mathrm{u}_D D,f_1,\dots,f_n,t} \subseteq 0_{M\otimes_R S}^{*\mathrm{u}_E E,f_1,\dots,f_n,t}.
	\end{align*}
	By an argument similar to Proposition \ref{tau over all modules}, we have
	\[
		\Ann_{R}0_{H_\m^d(\omega_R)}^{*\mathrm{u}_D D,f_1,\dots,f_n,t}=\bigcap_{M}\Ann_{R}0_M^{*\mathrm{u}_D D,f_1,\dots,f_n,t},
	\]
	and
	\[
	\Ann_{S}0_{H_\n^{\dim S}(\omega_S)}^{*\mathrm{u}_E E,f_1,\dots,f_n,t}=\bigcap_{N}\Ann_{S}0_N^{*\mathrm{u}_E E,f_1,\dots,f_n,t},
	\]
	where $M$ (resp. $N$) runs thorough all $R$-modules (resp. $S$-modules). On the other hand, we have
	\begin{align*}
		\Ann_{R}0_{H_\m^d(\omega_R)}^{*\mathrm{u}_D D,f_1,\dots,f_n,t}&=\Ann_R\left(\bigcap_{m\in \N}\bigcap_{i_1,\dots,i_n}0_{H_\m^d(\omega_R)}^{*\mathrm{u}_DD,(f_1^{i_1}\dots f_n^{i_n})^{\frac{1}{m}}}\right)\\
		&=\Ann_R \Ann_{H_\m^d(\omega_R)} \left(\sum_{m\in \N}\sum_{i_1,\dots,i_n}\adj_D(R,D,(f_1^{i_1}\dots f_n^{i_n})^{\frac{1}{m}})\right)\\
		&= \Ann_R \Ann_{H_\m^d(\omega_R)} \adj_D(R,D,\ba^t) \\
		&= \adj_D(R,D,\ba^t),
	\end{align*}
	where $i_1,\dots,i_n$ run through all non-negative integers such that $i_1+\dots+i_n=\lceil mt \rceil$ and the third equality follows from \cite[Proof of Lemma 8.3]{TY24}.
	Similarly, we obtain $\adj_E(S,E,(\ba S)^t)=\Ann_{S}0_{H_\n^{\dim S}(\omega_S)}^{*\mathrm{u}_E E,f_1,\dots,f_n,t}$. Hence, we have
	\begin{align*}
		\adj_D(R,D,\ba^t) &= \Ann_{R}0_{H_\m^d(\omega_R)}^{*\mathrm{u}_D D,f_1,\dots,f_n,t} \\
		& = \bigcap_{M}\Ann_{R}0_M^{*\mathrm{u}_D D,f_1,\dots,f_n,t} \\
		& \supseteq \bigcap_{M\otimes_R S}\Ann_{R}0_{M\otimes S}^{*\mathrm{u}_E E,f_1,\dots,f_n,t}\\
		& \supseteq \left(\bigcap_{N}\Ann_{S}0_N^{*\mathrm{u}_E E,f_1,\dots,f_n,t}\right)\cap R\\
		& = \left(\Ann_{S}0_{H_\n^{\dim S}(\omega_S)}^{*\mathrm{u}_E E,f_1,\dots,f_n,t}\right) \cap R\\
		& =\adj_E(S,E,(\ba S)^t)\cap R,
	\end{align*}
	where $M$ (resp. $N$) runs thorough all $R$-modules (resp. $S$-modules).
\end{proof}
\begin{rem}
	The above theorem can be extended to the case when $t$ is a positive real number because 
	\[
		\adj_E(S,E,(\ba S)^{t+\epsilon})=\adj_E(S,E,(\ba S)^t)
	\]
	for any $0<\epsilon \ll 1$.
\end{rem}
\begin{ques} \label{question}
	Can the above theorem be extended to triples $(R,D+\Gamma,\ba^t)$, where $\Gamma$ is an effective $\Q$-Weil divisor which has no component equal to $D$, its anti-log-canonical ring is finitely generated, and $\ba$ and $t$ as in Setting \ref{setting of rings with finitely generated anti-canonical algebras}?
\end{ques}
 Question \ref{question} has an affirmative answer if $K_X+D+\Gamma$ is $\Q$-Cartier (see \cite{TY24}). 
As an application of the main theorem, we show the behavior of singularities of lc type under pure morphisms.
\begin{rem}
	When $X=\Spec A$, $Y=\Spec B$ are both affine, then $Y\to X$ is pure if and only if $A\to B$ is pure (\cite[Lemma 2.2]{HH95}).
\end{rem}
\begin{lem}
	Let $X$ be a normal affine variety over an algebraically closed field of characteristic zero. Suppose that $\mathscr{R}=\bigoplus_{i\ge 0}\sO_X(-iK_X)$ is a finitely generated $\sO_X$-algebra. Take a nonzero element $f$ of the multiplier ideal $\mathcal{J}(X)$. $X$ is of lc type if and only if for any $0<\epsilon<1$, $f\in \mathcal{J}(X,(1-\epsilon)\Div (f))$.
\end{lem}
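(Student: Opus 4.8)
The plan is to reduce to the case in which the canonical divisor is $\Q$-Cartier by passing to the anti-canonical cover $Y:=\Spec\mathscr{R}$ --- the device used throughout this section --- and to settle that case by a direct log-resolution computation. By \cite[Theorem 4.5]{GHNV90} the canonical divisor $K_Y$ is linearly trivial, hence Cartier, so $Y$ is $\Q$-Gorenstein. I would then invoke the dictionary between $X$ and $Y$ furnished by the finite-generation hypothesis, in the style of \cite{CEMS18}: for every ideal $\ba\subseteq\sO_X$ and every $t>0$, $\mathcal{J}(X,\ba^t)$ is the degree-zero part of $\mathcal{J}(Y,(\ba\mathscr{R})^t)$, and $X$ is of lc type if and only if $Y$ has log canonical singularities. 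Since $f\in R=\mathscr{R}_0$ is homogeneous of degree zero and the multiplier ideal of a homogeneous ideal is homogeneous, taking $\ba=(f)$ this dictionary converts ``$f\in\mathcal{J}(X,(1-\epsilon)\Div(f))$'' into ``$f\in\mathcal{J}(Y,(1-\epsilon)\Div(f))$'' (with $\Div$ now computed on $Y$, where $\Div_Y(f)=\pi^*\Div_X(f)$), and ``$X$ of lc type'' into ``$Y$ log canonical'', and, taking $\ba=\sO_X$, it yields $f\in\mathcal{J}(Y)$ from $f\in\mathcal{J}(X)$. Hence the lemma for $X$ follows once it is proved for $Y$, i.e.\ when the canonical divisor is $\Q$-Cartier.

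So suppose $K_X$ is $\Q$-Cartier (this is the situation of $Y$). Fix a log resolution $\mu\colon X'\to X$ of $(X,\Div(f))$; let $E$ run over the prime divisors of $X'$, write $K_{X'}=\mu^*K_X+\sum_E a_E E$ and $b_E=\mathrm{ord}_E(\mu^*\Div(f))=\mathrm{ord}_E(\mu^*f)\in\Z_{\ge0}$, the latter being integral because $f$ is regular. Reading the three multiplier ideals off the resolution: $X$ is log canonical if and only if $a_E\ge-1$ for all $E$; $f\in\mathcal{J}(X)$ if and only if $b_E\ge\lfloor-a_E\rfloor$ for all $E$ (true by hypothesis); and $f\in\mathcal{J}(X,(1-\epsilon)\Div(f))$ if and only if $b_E\ge\lfloor-a_E+(1-\epsilon)b_E\rfloor$ for all $E$. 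For the ``only if'' direction, assume $X$ is log canonical, so $-a_E\le1$: if $b_E=0$ the desired inequality is $0\ge\lfloor-a_E\rfloor$, true since $f\in\mathcal{J}(X)$; and if $b_E\ge1$ then, as $0<\epsilon<1$ forces $(1-\epsilon)b_E<b_E$,
\[
\lfloor-a_E+(1-\epsilon)b_E\rfloor\le\lfloor 1+(1-\epsilon)b_E\rfloor=1+\lfloor(1-\epsilon)b_E\rfloor\le 1+(b_E-1)=b_E.
\]
Thus $f\in\mathcal{J}(X,(1-\epsilon)\Div(f))$ for every $\epsilon\in(0,1)$. For the ``if'' direction, suppose the membership holds for all such $\epsilon$ while $a_{E_0}<-1$ for some $E_0$, and set $\delta=-a_{E_0}-1>0$. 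Choosing $\epsilon\in(0,1)$ with $\epsilon\,b_{E_0}<\delta$ gives $-a_{E_0}+(1-\epsilon)b_{E_0}>1+b_{E_0}$, hence $\lfloor-a_{E_0}+(1-\epsilon)b_{E_0}\rfloor\ge b_{E_0}+1>b_{E_0}$, contradicting the membership. Therefore $a_E\ge-1$ for all $E$ and $X$ is log canonical.

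The crux --- and the only place where finite generation of $\mathscr{R}$ is genuinely used --- is the dictionary of the first paragraph: the identification of $\mathcal{J}(X,\ba^t)$ with the degree-zero part of $\mathcal{J}(Y,(\ba\mathscr{R})^t)$ and the equivalence ``$X$ of lc type $\Longleftrightarrow$ $Y$ log canonical''. Establishing these means tracking the $\N$-grading on $\mathscr{R}$, allowing $\Div(f)$ to be non-reduced, and controlling $\pi\colon Y\to X$ over the non-Gorenstein locus of $X$ (of codimension $\ge 2$); I would model the argument on \cite{CEMS18}. An alternative that stays on $X$ would replace this step by de Fernex--Hacon's result (the Proposition of Section~2.1, with $D=\Delta=0$, $\ba=\sO_X$; see \cite[Remark 6.4]{dFH09}), which produces an effective $\Q$-Cartier divisor $\Delta$ with $\mathcal{J}(X)=\mathcal{J}(X,\Delta)$, and would then run the same round-down estimate inside $\mathcal{J}(X,(1-\epsilon)\Div(f)+\Delta)\subseteq\mathcal{J}(X,(1-\epsilon)\Div(f))$; there the delicate point becomes arranging $(X,\Delta)$ to be log canonical, which is where the lc-type hypothesis must be used.
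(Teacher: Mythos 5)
Your discrepancy computation on a $\Q$-Gorenstein model is correct (and is essentially the content of the cited Lemma 8.12 of \cite{TY24}, which the paper simply invokes), but the reduction step that carries all the weight of the lemma is not established, and this is a genuine gap. You pass to $Y=\Spec\mathscr{R}$ and assert a ``dictionary'': that $\mathcal{J}(X,\ba^t)$ is the degree-zero part of $\mathcal{J}(Y,(\ba\mathscr{R})^t)$ and that $X$ is of lc type if and only if $Y$ is log canonical. Neither statement is proved, and neither is what \cite{CEMS18} provides: the result you would need to imitate (Corollary 2.25 of \cite{CEMS18}, the one the paper actually uses) concerns the \emph{small birational} model $X'=\Proj\mathscr{R}\to X$, not the $(d{+}1)$-dimensional affine scheme $\Spec\mathscr{R}$, whose relation to $X$ is a relative cone/Seifert-type construction with its own nontrivial discrepancy bookkeeping. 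In particular the implication ``$\Spec\mathscr{R}$ lc $\Rightarrow$ $X$ of lc type'' is itself a theorem of essentially the same depth as the lemma; any proof of it that I know of proceeds through the Proj model and a descent of lc type along the small morphism, i.e.\ it would reconstruct exactly the route the paper takes ($\pi_*\mathcal{J}(X',\pi^*\Delta)=\mathcal{J}(X,\Delta)$ via \cite{CEMS18}, the $\Q$-Gorenstein case on $X'$, then a Zhuang-type argument \cite[Lemma 2.4]{Zhuang24} to descend from $X'$ to $X$). So the proposal does not close the loop: the only part you prove in detail is the part the paper quotes, and the part you leave ``to be modeled on \cite{CEMS18}'' is the actual crux.

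The fallback you sketch at the end does not repair this. The de Fernex--Hacon compatible boundary $\Delta$ with $\mathcal{J}(X)=\mathcal{J}(X,\Delta)$ gives no control on the singularities of the pair $(X,\Delta)$, and in the ``if'' direction you cannot arrange $(X,\Delta)$ to be log canonical, since that is precisely the conclusion you are trying to reach; for that direction one genuinely needs a $\Q$-Gorenstein (or $\Q$-Cartier $K+\Delta$) model on which the $\epsilon$-family of memberships can be converted into the bound $a_E\ge -1$, which is what $X'=\Proj\mathscr{R}$ (or, if your dictionary were proved, $\Spec\mathscr{R}$) supplies. To make your approach complete you would have to prove the graded comparison of multiplier ideals for $\Spec\mathscr{R}$ and the equivalence of lc type for $X$ with lc-ness of $\Spec\mathscr{R}$, or else revert to the Proj model and the cited results.
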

\begin{proof}
	The ``only if'' direction follows from \cite[Lemma 8.12]{TY24}. Let $\pi:X'=\Proj \mathscr{R}\to X$. By \cite[Corollary 2.25]{CEMS18}, we see that $\pi_*\mathcal{J}(X',\pi^*\Delta)=\mathcal{J}(X,\Delta)$ for any $\Q$-Cartier divisor $\Delta$ on $X$. Suppose that $f\in \mathcal{J}(X,(1-\epsilon)\Div(f))$ for any $0<\epsilon<1$. Then $f\in H^0(X',\mathcal{J}(X',(1-\epsilon)\Div (f)))$. Since $X'$ is $\Q$-Gorenstein, $X'$ has log canonical singularities by \cite[Lemma 8.12]{TY24}. An argument similar to \cite[Lemma 2.4]{Zhuang24} implies that $X$ is of lc type.
\end{proof}
\begin{thm} \label{log canonical under pure morphisms}
		Let $f:Y\to X$ be a pure morphism between normal complex varieties and suppose that $\bigoplus_{i\ge 0} \sO_X(-iK_X)$ is finitely generated.
		Assume in addition that one of the following conditions holds. 
		\begin{enumerate}
			\item There exists an effective $\Q$-Weil divisor $\Delta$ on $Y$ such that $K_Y+\Delta$ is $\Q$-Cartier and no non-klt center of $(Y, \Delta)$ dominates $X$. 
			\item The non-klt-type locus of $Y$ has dimension at most one. 
		\end{enumerate}
		If $Y$ is of lc type, then $X$ is of lc type.
\end{thm}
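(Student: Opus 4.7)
The plan is to combine the preceding lemma with Theorem \ref{adjoint ideals under pure morphisms}, using a well-chosen element of $\mathcal{J}(X)$ and a log-resolution computation on $Y$.

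By the preceding lemma, $X$ is of lc type if and only if there exists a nonzero $g \in \mathcal{J}(X)$ such that $g \in \mathcal{J}(X, (1-\epsilon)\Div_X(g))$ for every $\epsilon \in (0,1)$. Such a $g$ exists because $\mathcal{J}(X) \neq 0$: by \cite[Corollary 2.25]{CEMS18}, $\mathcal{J}(X) = \rho_*\mathcal{J}(X')$ where $\rho\colon X' = \Proj \bigoplus_{i\ge 0}\sO_X(-iK_X) \to X$ and $X'$ is $\Q$-Gorenstein. Applying Theorem \ref{adjoint ideals under pure morphisms} with $D = E = 0$, $\ba = (g)$, $t = 1-\epsilon$ locally at each point of $X$ (paired with a point of $Y$ above it as in the definition of purity) yields $\mathcal{J}(Y, (g\sO_Y)^{1-\epsilon})\cap \sO_X \subseteq \mathcal{J}(X, (g)^{1-\epsilon})$. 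Thus it suffices to prove $f^{\#}g \in \mathcal{J}(Y, (1-\epsilon)\Div_Y(f^{\#}g))$ for a suitable $g$, where $f^{\#}g$ denotes the pullback of $g$ along $f$.

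In case (1), using the lc-type hypothesis one arranges $\Delta$ so that $(Y,\Delta)$ is log canonical. Fix a log resolution $\mu\colon \tilde{Y} \to Y$ of $(Y,\Delta)$; the finitely many prime divisors $F_j \subset \tilde{Y}$ of log discrepancy zero over $(Y,\Delta)$ correspond to the lc centers of $(Y,\Delta)$, and each $\mu(F_j)$ projects via $f$ to a proper subvariety $Z_j \subsetneq X$ by condition (1). Choose $g \in \mathcal{J}(X)\cap \bigcap_j I_{Z_j}$ nonzero; this is possible since each factor is a nonzero ideal in the domain $\sO_X$. Then $b_j := \mathrm{mult}_{F_j}(\mu^{*}\Div_Y(f^{\#}g)) \ge 1$, and writing $1+c_F$ for the log discrepancy of a prime $F$ on $\tilde{Y}$ over $(Y,\Delta)$, the ceiling inequality
\[
\lceil \epsilon\cdot \mathrm{mult}_F(\mu^{*}\Div_Y(f^{\#}g)) + c_F \rceil \ge 0
\]
holds for every $F$---the delicate case $c_F = -1$ being saved by $\epsilon b_F > 0$. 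This gives $f^{\#}g \in \mathcal{J}(Y, \Delta + (1-\epsilon)\Div_Y(f^{\#}g)) \subseteq \mathcal{J}(Y, (1-\epsilon)\Div_Y(f^{\#}g))$, the last inclusion being immediate from the dFH definition of the multiplier ideal.

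In case (2), if $\dim X \le 1$ then $X$ is a normal curve or a closed point and is automatically of lc type. If $\dim X \ge 2$, reduce to case (1) by producing an effective $\Q$-Weil divisor $\Delta$ on $Y$ with $K_Y+\Delta$ $\Q$-Cartier, $(Y,\Delta)$ log canonical, and all non-klt centers of $(Y,\Delta)$ contained in the non-klt-type locus of $Y$: since the latter has dimension at most one and $\dim X \ge 2$, no such non-klt center can dominate $X$. The divisor $\Delta$ is constructed by patching a global lc boundary $\Delta_0$ from the lc-type hypothesis with local klt boundaries $\Delta_p$ witnessing klt-type at each $p$ outside the non-klt-type locus, via small convex combinations of the form $(1-s)(K_Y+\Delta_0)+s(K_Y+\overline{\Delta_p})$, which remain $\Q$-Cartier.

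The main obstacle is the patching argument in case (2): assembling locally-defined klt boundaries into a single globally lc $\Delta$ with non-klt centers confined to the non-klt-type locus is subtle and requires quasi-projectivity of $Y$ together with extensions by small effective $\Q$-Cartier divisors supported near each klt-type point. A secondary technical point is arranging $(Y,\Delta)$ log canonical in case (1) while preserving the no-dominating-non-klt-center hypothesis; once this is secured, the log-resolution inequality is essentially direct.
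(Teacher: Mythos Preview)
Your approach is essentially the same as the paper's: the paper's proof simply cites \cite[Proof of Theorem 8.13]{TY24} and substitutes Theorem \ref{adjoint ideals under pure morphisms} for the earlier pure-subring result, and your sketch reconstructs that argument, combining the preceding lemma with Theorem \ref{adjoint ideals under pure morphisms} and a log-resolution computation after choosing $g$ to vanish on the images of the relevant non-klt centers. The two technical points you flag---arranging $(Y,\Delta)$ to be log canonical while preserving the non-dominating condition in case (1), and the patching of local klt boundaries in case (2)---are exactly the content of \cite[Proof of Theorem 8.13]{TY24} that the paper defers to rather than reproves; so your outline is correct and aligned with the paper, with the acknowledged gaps being precisely what the cited reference supplies.
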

\begin{proof}
	This follows from \cite[Proof of Theorem 8.13]{TY24}. We use Theorem \ref{adjoint ideals under pure morphisms} in stead of \cite[Theorem 1.2]{Yam23}.
\end{proof}
\bibliography{bibtex.bib}
\bibliographystyle{alpha}

\bigskip

\end{document}